\newtheorem{theorem}{Theorem}[section]
\newtheorem{proposition}{Proposition}[section]
\newtheorem{lemma}{Lemma}[section]
\newtheorem{corollary}{Corollary}[section]
\newtheorem{definition}{Definition}[section]
\newtheorem{remark}{Remark}[section]
\newtheorem{example}{Example}[section]
\newtheorem{remark-definition}{Remark and Definition}[section]
\newtheorem{rem-not}{Remark and Notation}[section]
\begin{document}
\title{\bf Interval B-Tensors and Interval Double B-Tensors\thanks{The first author's work was supported by the National Natural Science Foundation of P.R. China (Grant No.12171064).}}
\date{}
\author{ Li Ye, Yisheng Song\thanks{Corresponding author E-mail: yisheng.song@cqnu.edu.cn}\\
	School of Mathematical Sciences,  Chongqing Normal University, \\
	Chongqing, 401331, P.R. China. \\ Email: neutrino1998@126.com (Ye); yisheng.song@cqnu.edu.cn (Song)}
\maketitle

{\noindent\bf Abstract.}
This paper systematically investigates the properties and characterization of interval B-tensors and interval double B-tensors. We propose verifiable necessary and sufficient conditions that allow for determining whether an entire interval tensor family belongs to these classes based solely on its extreme point tensors. The study elucidates profound connections between these interval tensors and other structured ones such as interval Z-tensors and P-tensors, while also providing simplified criteria for special cases like circulant structures. Furthermore, under the condition of even order and symmetry, we prove that interval B-tensors (double B-tensors) ensure the property of being an interval P-tensor. This work extends interval matrix theory to tensors, offering new analytical tools for fields such as polynomial optimization and complementarity problems involving uncertainty.

\vspace{.3cm}
{\noindent\bf Mathematics Subject Classification.} 90C23, 15A69, 15A72, 90C30, 65G40
\vspace{.3cm}

{\noindent\bf Keywords.} B-tensor, Double B-tensor, Interval tensor, P-tensor.

\section{Introduction}
\setcounter{section}{1}

In numerical analysis, uncertainty quantification, and robust optimization, matrices and tensors are fundamental tools for modeling and computation \cite{01,02,ql2017}. When matrix entries are subject to measurement errors, parameter fluctuations, or data uncertainty, the theory of interval matrices has been developed to study collective properties of families of matrices whose entries range within given intervals. Since its emergence in the 1960s, interval matrix theory has been widely applied in control theory, system reliability analysis, and robust computing, and has given rise to the study of various structured classes, such as interval P-matrices, interval M-matrices, interval B-matrices, and others \cite{a1,a2,a3,a4,a5,a6,a7,a8,lm2023}. In \cite{lm2023}, Lorenc systematically established a characterization theory for interval B-matrices and interval doubly B-matrices, providing equivalent criteria that depend only on the extreme point matrices, and studied their relationships with interval P-matrices and interval Z-matrices. This work not only provides important tools for the study of structured classes of interval matrices but also naturally motivates the extension to higher-order tensor cases.

As higher-order generalizations of matrices, tensors play important roles in polynomial optimization, complementarity problems, machine learning, and signal processing \cite{b1,b2,b3,b4,b5,b6}. Beheshti et al. \cite{bf2022} extended several classes of interval matrices to their tensor counterparts and probed into the corresponding properties and characterizations. Cui and Zhang \cite{CZ2023} derived bounds for the H-eigenvalues of even order real symmetric interval tensors as well as negative interval tensors.  Rahmati and Tawhid \cite{sr2020} proposed the slice-property of tensor sets by extending the row-property of matrix sets, and proved that a convex tensor set is slice-positive definite if and only if its extreme point set holds this property. Regarding Hurwitz stability, Bozorgmanesh et al. \cite{boz2020} established a link between the stability and positive definiteness of symmetric interval tensors, and gave the estimation of Z-eigenvalue ranges via constructing the tensor counterpart of Bendixson's theorem (consistent with its matrix-based expression), which only applies to symmetric interval tensors. Additional relevant works on interval tensors can be found in \cite{c1,c2,c3}.

B-tensors and double B-tensors, introduced in recent years, are important structural classes closely related to tensor positive definiteness, solvability of complementarity problems, eigenvalue localization, and numerical algorithm design \cite{sq2015,qs2014,pl2014,cy2015,d1,d2,d3,d4}. When tensor entries are subject to interval uncertainty, the definitions of interval B-tensors and interval double B-tensors arise naturally meaning that every tensor within the interval is of the corresponding type. However, unlike interval matrix theory which has well-established verification criteria, the study of interval B-tensors and interval double B-tensors is still in its early stages, lacking computable characterization criteria based solely on the extreme point tensors, analogous to those in interval matrix theory.

This paper aims to establish a theoretical framework for interval B-tensors and interval double B-tensors. We first formalize their definitions based on classical B-tensors and double B-tensors. Then, we establish several equivalent characterization theorems for interval B-tensors, revealing their connections with structures like interval diagonal dominance and interval Z-tensors. Furthermore, we investigate criteria for interval double B-tensors and analyze their relationship with interval B-tensors. Finally, for special structures such as circulant interval tensors, we provide simplified criteria.

The paper is organized as follows: Section 2 reviews B-tensors, double B-tensors, and related concepts. Section 3 systematically studies the properties and criteria of interval B-tensors. Section 4 discusses interval double B-tensors and their relationship with interval B-tensors. Section 5 concludes the paper and suggests future research directions.

\section{Preliminaries}
\setcounter{section}{2}


A real $m$th order $n$-dimensional tensor $\mathcal{A} = (a_{i_1i_2\cdots i_m})$ is a multi-array of real entries $a_{i_1i_2\cdots i_m}$, where each index $i_j$ belongs to $[n]:=\{1,2,\cdots,n\}$ for $j\in[m]:=\{1,2,\cdots,m\}$. The set of all such tensors in denoted by $T_{m,n}$. Entries of the form $a_{i_1i_1\cdots i_1}$ are termed diagonal entries, while others are off-diagonal entries. If the entries $a_{i_1i_2\cdots i_m}$ of a tensor $\mathcal{A}\in T_{m,n}$ remain unchanged under any permutation of their indices, then $\mathcal{A}$ is called a symmetric tensor. The set of all real $m$th order $n$-dimensional symmetric tensors is denoted by $S_{m,n}$.


\begin{definition}\label{def-bt}\textup{\cite{sq2015}}
A tensor $\mathcal{A} = (a_{i_1i_2\cdots i_m})\in T_{m,n}$ is said to be a B-tensor if it satisfies the following conditions for all $i_1\in[n]$:
\begin{itemize}
  \item [(a)]  $\sum\limits_{i_2,i_3,\cdots, i_m=1}^n a_{i_1i_2\cdots i_m}>0$,
  \item [(b)]  $\frac{1}{n^{m-1}}(\sum\limits_{i_2,i_3,\cdots, i_m=1}^n a_{i_1i_2\cdots i_m})>a_{i_1j_2\cdots j_m}~~\mbox{ for all }(j_2,j_3,\cdots, j_m)\neq(i_1,i_1,\cdots,i_1).$
\end{itemize}
\end{definition}

\begin{lemma}\label{r1}\textup{\cite{sq2015}}
Let $\mathcal{A} = (a_{i_1i_2\cdots i_m})\in T_{m,n}$ is a B-tensor, then for all $i_1\in[n]$
$$a_{i_1i_1\cdots i_1}>\max\{0,a_{i_1i_2\cdots i_m}:(i_2,i_3,\cdots, i_m)\neq(i_1,i_1,\cdots,i),i_2,i_3,\cdots,i_m\in[n]\}:=\gamma_{i_1}^+(\mathcal{A}).$$
\end{lemma}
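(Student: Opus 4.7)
The idea is to exploit the fact that condition (b) of Definition 2.1 bounds \emph{every} off-diagonal entry $a_{i_1 j_2 \cdots j_m}$ (with $(j_2,\ldots,j_m)\neq(i_1,\ldots,i_1)$) by the row average $S/n^{m-1}$, where $S:=\sum_{i_2,\ldots,i_m=1}^{n} a_{i_1 i_2\cdots i_m}$. The plan is to turn this uniform upper bound on the off-diagonal part of the row into a lower bound on the diagonal entry, and then to invoke condition (a) to finish the comparison with $0$.

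First I would fix $i_1\in[n]$ and decompose the row sum as
\[
S \;=\; a_{i_1 i_1\cdots i_1} \;+\; \sum_{(j_2,\ldots,j_m)\neq(i_1,\ldots,i_1)} a_{i_1 j_2\cdots j_m},
\]
noting that the off-diagonal block contains exactly $n^{m-1}-1$ entries. Applying condition (b) termwise to this block gives a strict inequality
\[
\sum_{(j_2,\ldots,j_m)\neq(i_1,\ldots,i_1)} a_{i_1 j_2\cdots j_m} \;<\; (n^{m-1}-1)\cdot\frac{S}{n^{m-1}},
\]
and solving for the diagonal entry yields $a_{i_1 i_1\cdots i_1} > S/n^{m-1}$. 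Combined once more with condition (b) this already shows $a_{i_1 i_1\cdots i_1} > a_{i_1 j_2\cdots j_m}$ for every off-diagonal index tuple.

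To conclude $a_{i_1 i_1\cdots i_1}>0$, I would simply use condition (a): since $S>0$, the derived bound $a_{i_1 i_1\cdots i_1} > S/n^{m-1}>0$ is immediate. Taking the maximum over $0$ and all off-diagonal entries in row $i_1$ then gives $a_{i_1 i_1\cdots i_1} > \gamma_{i_1}^+(\mathcal{A})$, as required.

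There is no real obstacle here; the argument is essentially an averaging argument in the spirit of the matrix B-matrix proof. The only mild care is bookkeeping the count $n^{m-1}-1$ of off-diagonal indices and checking that the strict inequality in (b) survives the summation (it does, since the sum is over a nonempty index set). I would present the two conclusions ``diagonal $>$ off-diagonal'' and ``diagonal $>0$'' in that order so that the max in the definition of $\gamma_{i_1}^+(\mathcal{A})$ is handled transparently.
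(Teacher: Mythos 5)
Your proof is correct, and since the paper states this lemma as a cited result from \cite{sq2015} without reproducing a proof, there is nothing internal to compare it against; your averaging argument (each off-diagonal entry is below the row average $S/n^{m-1}$, hence the diagonal entry must exceed that average, which is positive by condition (a)) is exactly the standard derivation. The only degenerate point worth a word is $n=1$, where the off-diagonal index set is empty and the claim reduces to $a_{11\cdots1}>0$, which follows from condition (a) alone.
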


\begin{proposition}\label{p2}\textup{\cite{sq2015}}
Let $\mathcal{A} = (a_{i_1i_2\cdots i_m})\in T_{m,n}$. Then $\mathcal{A}$ is a B-tensor if and only if for all $i_1\in[n]$
$$\sum\limits_{i_2,i_3,\cdots, i_m=1}^n a_{i_1i_2\cdots i_m}>n^{m-1}\gamma_{i_1}^+(\mathcal{A}).$$
\end{proposition}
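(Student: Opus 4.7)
The plan is to prove both directions by unpacking the definition of $\gamma_{i_1}^+(\mathcal{A})$ as a maximum and exploiting the elementary fact that $x > \max\{y_1,\ldots,y_k\}$ if and only if $x > y_j$ for every $j$.

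For the forward direction, I would assume $\mathcal{A}$ is a B-tensor and fix an arbitrary $i_1 \in [n]$. From condition (a) of Definition \ref{def-bt}, $\frac{1}{n^{m-1}}\sum_{i_2,\ldots,i_m} a_{i_1 i_2\cdots i_m} > 0$, and from condition (b), $\frac{1}{n^{m-1}}\sum_{i_2,\ldots,i_m} a_{i_1 i_2\cdots i_m} > a_{i_1 j_2\cdots j_m}$ for every off-diagonal index tuple $(j_2,\ldots,j_m)\neq(i_1,\ldots,i_1)$. Taking the supremum of the right-hand sides of these two families of inequalities, I obtain $\frac{1}{n^{m-1}}\sum_{i_2,\ldots,i_m} a_{i_1 i_2\cdots i_m} > \gamma_{i_1}^+(\mathcal{A})$, and multiplying through by $n^{m-1}$ yields the desired bound.

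For the converse, I would suppose that for each $i_1\in[n]$ we have $\sum_{i_2,\ldots,i_m} a_{i_1 i_2\cdots i_m} > n^{m-1}\gamma_{i_1}^+(\mathcal{A})$. Since $\gamma_{i_1}^+(\mathcal{A})\geq 0$ by definition, the right-hand side is nonnegative, so condition (a) of Definition \ref{def-bt} holds immediately. Moreover, for any tuple $(j_2,\ldots,j_m)\neq(i_1,\ldots,i_1)$, the definition of $\gamma_{i_1}^+(\mathcal{A})$ as a maximum gives $\gamma_{i_1}^+(\mathcal{A})\geq a_{i_1 j_2\cdots j_m}$, so $\frac{1}{n^{m-1}}\sum_{i_2,\ldots,i_m} a_{i_1 i_2\cdots i_m} > \gamma_{i_1}^+(\mathcal{A})\geq a_{i_1 j_2\cdots j_m}$, which is exactly condition (b).

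There is no genuine obstacle here; the statement is essentially a reformulation that consolidates conditions (a) and (b) of the B-tensor definition into a single scalar inequality by absorbing them into the maximum $\gamma_{i_1}^+(\mathcal{A})$. The only subtlety worth flagging in the write-up is that the constant $0$ appearing inside the max is precisely what encodes condition (a), so the equivalence would fail if $\gamma_{i_1}^+$ were defined as a maximum over off-diagonal entries alone without the $0$.
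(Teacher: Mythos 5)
Your proof is correct. The paper does not prove Proposition \ref{p2} at all --- it is imported from \cite{sq2015} as a known result --- so there is no in-paper argument to compare against; your two-directional unpacking of $\gamma_{i_1}^+(\mathcal{A})$ as $\max\{0,\,a_{i_1j_2\cdots j_m}\}$ is the natural and complete argument, and your closing remark that the $0$ inside the max is exactly what encodes condition (a) correctly identifies the only point of substance.
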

The following result directly follows from Proposition \ref{p2}.
\begin{proposition}\label{p3}
Let $\mathcal{A} = (a_{i_1i_2\cdots i_m})\in T_{m,n}$. Then $\mathcal{A}$ is a B-tensor if and only if for all $i_1\in[n]$
$$a_{i_1i_1\cdots i_1}-\gamma_{i_1}^+(\mathcal{A})>\sum\limits_{(i_2,i_3,\cdots, i_m)\neq(i_1,i_1,\cdots,i_1)}(\gamma_{i_1}^+(\mathcal{A})-a_{i_1i_2\cdots i_m}).$$
\end{proposition}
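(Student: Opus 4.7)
The plan is to derive the stated inequality as a pure algebraic rearrangement of the inequality in Proposition \ref{p2}, exploiting the fact that both sides of that inequality can be split into a "diagonal" contribution and an "off-diagonal" contribution with matching counts. Since Proposition \ref{p2} is already an iff-characterization of B-tensors, proving the equivalence of the two inequalities (for each fixed $i_1$) immediately yields the iff-statement of Proposition \ref{p3}.

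Concretely, I would first isolate the diagonal term on the left-hand side of Proposition \ref{p2} by writing
$$\sum_{i_2,\ldots,i_m=1}^{n} a_{i_1 i_2 \cdots i_m} = a_{i_1 i_1 \cdots i_1} + \sum_{(i_2,\ldots,i_m)\neq(i_1,\ldots,i_1)} a_{i_1 i_2 \cdots i_m}.$$
Next, I would rewrite the right-hand side by using that there are exactly $n^{m-1}-1$ off-diagonal multi-indices $(i_2,\ldots,i_m)\neq(i_1,\ldots,i_1)$, so
$$n^{m-1}\gamma_{i_1}^{+}(\mathcal{A}) = \gamma_{i_1}^{+}(\mathcal{A}) + \sum_{(i_2,\ldots,i_m)\neq(i_1,\ldots,i_1)} \gamma_{i_1}^{+}(\mathcal{A}).$$
Substituting these two identities into the inequality of Proposition \ref{p2} and moving the off-diagonal $a_{i_1 i_2 \cdots i_m}$-terms to the right and the $\gamma_{i_1}^{+}(\mathcal{A})$-terms to the left yields exactly the inequality asserted in Proposition \ref{p3}.

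There is no real obstacle here: the transformation is a reversible term-by-term regrouping, so the inequality of Proposition \ref{p3} and that of Proposition \ref{p2} are equivalent for each $i_1 \in [n]$, and the iff-characterization transfers verbatim. The only bookkeeping point worth flagging is the count $n^{m-1}-1$ of off-diagonal multi-indices, which is what makes the distribution of $\gamma_{i_1}^{+}(\mathcal{A})$ across the two sides work out cleanly.
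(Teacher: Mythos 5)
Your proposal is correct and matches the paper's (implicit) argument: the paper simply asserts that Proposition \ref{p3} "directly follows from Proposition \ref{p2}," and the reversible regrouping you describe — splitting off the diagonal term on the left and distributing $n^{m-1}\gamma_{i_1}^{+}(\mathcal{A})$ as one copy plus $n^{m-1}-1$ off-diagonal copies on the right — is exactly that derivation made explicit. No issues.
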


\begin{definition}\label{def-dd}\textup{\cite{qs2014}}
A tensor $\mathcal{A} = (a_{i_1i_2\cdots i_m})\in T_{m,n}$ is said to be a diagonally dominated tensor if for all $i_1\in[n]$,
$$a_{i_1i_1\cdots i_1}\geq\sum\limits_{(i_2,i_3,\cdots, i_m)\neq(i_1,i_1,\cdots,i_1)}|a_{i_1i_2\cdots i_m}|.$$
$\mathcal{A} = (a_{i_1i_2\cdots i_m})\in T_{m,n}$ is said to be a strictly diagonally dominated tensor if for all $i_1\in[n]$,
$$a_{i_1i_1\cdots i_1}>\sum\limits_{(i_2,i_3,\cdots, i_m)\neq(i_1,i_1,\cdots,i_1)}|a_{i_1i_2\cdots i_m}|.$$
\end{definition}

\begin{proposition}\label{p4}\textup{\cite{sq2015}}
If $\mathcal{A} = (a_{i_1i_2\cdots i_m})\in T_{m,n}$ is a Z-tensor, then the following are equivalent.
\begin{itemize}
  \item [(a)] $\mathcal{A}$ is a B-tensor.
  \item [(b)] $\sum\limits_{i_2,i_3,\cdots, i_m=1}^n a_{i_1i_2\cdots i_m}>0$ for each $i_1\in[n]$.
  \item [(c)] $\mathcal{A}$ is strictly diagonally dominant.
\end{itemize}
\end{proposition}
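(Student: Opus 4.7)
The plan is to exploit the defining sign condition of a Z-tensor, namely that every off-diagonal entry satisfies $a_{i_1 i_2 \cdots i_m}\le 0$, in order to collapse the quantity $\gamma_{i_1}^{+}(\mathcal{A})$ introduced in Lemma~2.1 down to zero. Once this reduction is in place, each of the three equivalences becomes either an instance of an earlier proposition or a one-line manipulation of signs, so the bulk of the work is just organizing the logical flow.

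First I would record the key observation: for a Z-tensor we have $a_{i_1 i_2\cdots i_m}\le 0$ whenever $(i_2,\dots,i_m)\neq(i_1,\dots,i_1)$, and therefore
\[
\gamma_{i_1}^{+}(\mathcal{A}) \;=\; \max\bigl\{0,\; a_{i_1 i_2\cdots i_m} : (i_2,\dots,i_m)\neq(i_1,\dots,i_1)\bigr\} \;=\; 0
\]
for every $i_1\in[n]$. Next, for (a)$\Leftrightarrow$(b), I would plug $\gamma_{i_1}^{+}(\mathcal{A})=0$ into Proposition~2.2: the condition $\sum_{i_2,\dots,i_m} a_{i_1 i_2\cdots i_m} > n^{m-1}\gamma_{i_1}^{+}(\mathcal{A})$ becomes exactly $\sum_{i_2,\dots,i_m} a_{i_1 i_2\cdots i_m} > 0$, which is statement (b).

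For (b)$\Leftrightarrow$(c), I would separate the diagonal entry from the rest of the row sum and use $-a_{i_1 i_2\cdots i_m}=|a_{i_1 i_2\cdots i_m}|$ for the off-diagonal positions. Concretely,
\[
\sum_{i_2,\dots,i_m=1}^{n} a_{i_1 i_2\cdots i_m} \;=\; a_{i_1 i_1\cdots i_1} - \sum_{(i_2,\dots,i_m)\neq(i_1,\dots,i_1)} |a_{i_1 i_2\cdots i_m}|,
\]
so row-sum positivity is equivalent to $a_{i_1 i_1\cdots i_1}>\sum_{(i_2,\dots,i_m)\neq(i_1,\dots,i_1)}|a_{i_1 i_2\cdots i_m}|$, i.e.\ strict diagonal dominance in the sense of Definition~2.2. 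This closes the cycle and yields all three equivalences.

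There is no real obstacle here; the only subtlety worth flagging is making sure the argument is stated for every fixed $i_1\in[n]$ (since all three conditions are row-wise), and being careful that the maximum in $\gamma_{i_1}^{+}$ is taken over $0$ as well, so that the Z-tensor sign assumption truly forces $\gamma_{i_1}^{+}(\mathcal{A})=0$ even when some off-diagonal entries are strictly negative. With that noted, the proof is essentially a direct corollary of Proposition~2.2 specialized to the Z-tensor case.
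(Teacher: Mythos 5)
Your proof is correct and is essentially the standard argument (the paper itself states this result as a citation to Song--Qi without reproducing a proof): collapsing $\gamma_{i_1}^{+}(\mathcal{A})$ to $0$ via the sign condition reduces (a)$\Leftrightarrow$(b) to Proposition~\ref{p2}, and splitting off the diagonal entry gives (b)$\Leftrightarrow$(c). One caveat: you correctly use the standard Z-tensor convention that off-diagonal entries are $\le 0$, whereas Definition~\ref{def-zt} as printed in the paper says $\ge 0$ --- that is evidently a typo in the paper (the result would be false under the literal reading, and the proof of Proposition~\ref{p6} also relies on nonpositive off-diagonal entries), but it is worth flagging that your argument depends on the corrected sign.
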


\begin{definition}\label{def-zt}\textup{\cite{zq2014}}
A tensor $\mathcal{A} = (a_{i_1i_2\cdots i_m})\in T_{m,n}$ is said to be a Z tensor if for all its off diagonal entries $a_{i_1i_2\cdots i_m}\geq0$.
\end{definition}

For $\mathcal{A}= (a_{i_1\cdots i_m})\in T_{m,n}$ and $x\in\mathbb{R}^n$, $M=(m_{ij})\in\mathbb{R}^{n\times n}$, and $k\in[m]$, $\mathcal{A}\times_1 M\times_2M\times_3\cdots\times_m M$ is a tensor in $T_{m,n}$ which defined by
$$(\mathcal{A}\times_1 M\times_2M\times_3\cdots\times_m M)_{i_1i_2\cdots i_m}=\sum\limits_{j_1j_2\cdots j_m=1}^n a_{j_1j_2\cdots j_m}m_{i_1j_1}\cdots m_{i_mj_m}.$$
And for a vector $\mathbf{x}\in\mathbb{R}^n$, $\mathcal{A}\mathbf{x}^{m-1}$ is a vector in $\mathbb{R}^n$ with the $i$th component
$$(\mathcal{A}\mathbf{x}^{m-1})_i:=\sum\limits_{i_2,\cdots i_m=1}^n a_{ii_2\cdots i_m}x_{i_2}\cdots x_{i_m},$$
for $1\leq i \leq n$.

\begin{definition}\label{def-pt}\textup{\cite{sq2015}}
A tensor $\mathcal{A} = (a_{i_1i_2\cdots i_m})\in T_{m,n}$ is said to be a P tensor if for all $\mathbf{x}\in\mathbb{R}^n\setminus\{\mathbf{0}\}$, there is $i_1\in[n]$ such that $x_{i_1}^{m-1}(\mathcal{A}\mathbf{x}^{m-1})_i>0$.
\end{definition}

\begin{proposition}\label{p1}\textup{\cite{qs2014,pl2014}}
Suppose $\mathcal{A} \in T_{m,n}$ is a B-tensor with $m$ is even. Then $\mathcal{A}$ is a P tensor if $\mathcal{A}$ is a Z tensor or $\mathcal{A}$ is a symmetric tensor.
\end{proposition}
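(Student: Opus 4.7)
The plan is to dispose of the two sufficient conditions of the disjunction separately, each time reducing the P-tensor inequality to an explicit sign estimate at a well-chosen coordinate.

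\emph{Z-tensor case.} First I would invoke Proposition~\ref{p4} to upgrade the hypothesis ``$\mathcal{A}$ is a B-tensor and a Z-tensor'' into the quantitative statement that $\mathcal{A}$ is strictly diagonally dominant. Given an arbitrary $\mathbf{x}\in\mathbb{R}^n\setminus\{\mathbf{0}\}$, I pick an index $i\in[n]$ at which $|x_i|=\|\mathbf{x}\|_{\infty}>0$ and expand
\[
x_i^{m-1}(\mathcal{A}\mathbf{x}^{m-1})_i=a_{ii\cdots i}\,|x_i|^{2(m-1)}+x_i^{m-1}\!\!\!\sum_{(i_2,\ldots,i_m)\neq(i,\ldots,i)}\!\!\!a_{ii_2\cdots i_m}\,x_{i_2}\cdots x_{i_m}.
\]
The bound $|x_{i_j}|\leq|x_i|$ for every $j$ lets me control each off-diagonal summand by $|a_{ii_2\cdots i_m}|\,|x_i|^{2(m-1)}$, and strict diagonal dominance then forces the whole right-hand side to be at least $|x_i|^{2(m-1)}\bigl(a_{ii\cdots i}-\sum|a_{ii_2\cdots i_m}|\bigr)>0$, which is exactly the P-tensor inequality at $\mathbf{x}$.

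\emph{Symmetric case.} Here the plan is first to establish positive definiteness of the homogeneous form $\mathcal{A}\mathbf{x}^m$ and then to convert it. I would invoke the Song--Qi decomposition of a symmetric B-tensor as $\mathcal{A}=\mathcal{M}+\sum_k h_k\,\mathcal{E}_k$, in which $\mathcal{M}$ is a symmetric strictly diagonally dominant Z-tensor with positive diagonal entries, $h_k>0$, and each $\mathcal{E}_k$ is the symmetric rank-one tensor built from the indicator vector of some $J_k\subseteq[n]$. Since $m$ is even, each rank-one contribution $\mathcal{E}_k\mathbf{x}^m=\bigl(\sum_{j\in J_k}x_j\bigr)^m$ is $\geq0$, while symmetric strong M-tensors of even order are positive definite, so $\mathcal{M}\mathbf{x}^m>0$. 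Summing gives $\mathcal{A}\mathbf{x}^m>0$ for every nonzero $\mathbf{x}$. Writing $\mathcal{A}\mathbf{x}^m=\sum_i x_i(\mathcal{A}\mathbf{x}^{m-1})_i$ then forces some coordinate $i$ with $x_i(\mathcal{A}\mathbf{x}^{m-1})_i>0$; evenness of $m$ aligns the signs of $x_i$ and $x_i^{m-1}$, so $x_i^{m-1}(\mathcal{A}\mathbf{x}^{m-1})_i>0$ as required.

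\emph{Main obstacle.} The real work lies behind the symmetric case, in producing (or properly citing) the decomposition $\mathcal{A}=\mathcal{M}+\sum_k h_k\,\mathcal{E}_k$: this is what consumes the full B-tensor inequality of Proposition~\ref{p3} rather than only diagonal dominance, and it is considerably more delicate than the one-line bound used in the Z-tensor case. Evenness of $m$ enters essentially twice over — in the nonnegativity of $\bigl(\sum_{j\in J_k}x_j\bigr)^m$ and in the final sign-matching between $x_i$ and $x_i^{m-1}$ — so for odd $m$ neither step would be available, which is why the hypothesis ``$m$ even'' cannot be dropped.
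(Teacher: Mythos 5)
Your proof is correct, and it follows essentially the same route as the sources the paper itself cites for this proposition (\cite{qs2014,pl2014}); note the paper states Proposition~\ref{p1} as a quoted result and supplies no proof of its own. The Z-tensor half (Proposition~\ref{p4} plus the max-coordinate estimate for strictly diagonally dominated tensors with positive diagonal) and the symmetric half (the Qi--Song decomposition into a strictly diagonally dominated symmetric M-tensor plus positive multiples of partially-all-one tensors, then positive definiteness $\Rightarrow$ P-tensor via $\mathcal{A}\mathbf{x}^m=\sum_i x_i(\mathcal{A}\mathbf{x}^{m-1})_i$) are exactly the standard arguments, with your only external dependence being the decomposition theorem, which you correctly flag as the step requiring citation.
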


For a tensor $\mathcal{A} = (a_{i_1i_2\cdots i_m})\in T_{m,n}$, the set $\{a_{i_1i_2\cdots i_m}:i_2,i_3,\cdots, i_m\in[n]\}$ is called the $i_1$-th row of $\mathcal{A}$.

\begin{proposition}\label{p5}
Let $\mathcal{A},\mathcal{B},\mathcal{C} \in T_{m,n}$, where $\mathcal{A}$ and $\mathcal{B}$ are B-tensors. If for each $i_1\in[n]$, either
$c_{i_1i_1\cdots i_1}=a_{i_1i_1\cdots i_1}$ and the off-diagonal entries of the $i_1$-th row in $\mathcal{A}$ and $\mathcal{C}$ forms bijection,
or $c_{i_1i_1\cdots i_1}=b_{i_1i_1\cdots i_1}$ and the off-diagonal entries of the $i_1$-th row in $\mathcal{B}$ and $\mathcal{C}$ forms bijection holds, then $\mathcal{C}$ is also a B-tensor.
\end{proposition}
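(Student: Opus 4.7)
The plan is to reduce the statement to the row-by-row characterization of B-tensors provided by Proposition \ref{p3}, which expresses the B-tensor property at row $i_1$ purely in terms of the diagonal entry $a_{i_1i_1\cdots i_1}$ and the unordered collection of off-diagonal entries of that row, namely via $\gamma_{i_1}^+(\cdot)$ and the sum $\sum(\gamma_{i_1}^+(\cdot)-a_{i_1i_2\cdots i_m})$. No ordering or positional information about the off-diagonal entries enters this criterion.

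First I would fix an arbitrary index $i_1\in[n]$ and consider the two alternatives offered by the hypothesis. In the alternative aligning $\mathcal{C}$ with $\mathcal{A}$, we have $c_{i_1i_1\cdots i_1}=a_{i_1i_1\cdots i_1}$, and the bijection between the off-diagonal entries of row $i_1$ of $\mathcal{A}$ and of $\mathcal{C}$ is equivalent to the equality of the two corresponding multisets. Since a maximum depends only on the multiset of values, this yields $\gamma_{i_1}^+(\mathcal{C})=\gamma_{i_1}^+(\mathcal{A})$, and since a sum depends only on the multiset of summands,
\[\sum_{(i_2,\ldots,i_m)\neq(i_1,\ldots,i_1)}\bigl(\gamma_{i_1}^+(\mathcal{C})-c_{i_1i_2\cdots i_m}\bigr)=\sum_{(i_2,\ldots,i_m)\neq(i_1,\ldots,i_1)}\bigl(\gamma_{i_1}^+(\mathcal{A})-a_{i_1i_2\cdots i_m}\bigr).\]
Combined with the equality of diagonal entries, the Proposition \ref{p3} inequality for $\mathcal{C}$ at row $i_1$ is literally the Proposition \ref{p3} inequality for $\mathcal{A}$ at row $i_1$, which holds because $\mathcal{A}$ is a B-tensor. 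The other alternative is identical with $\mathcal{B}$ in place of $\mathcal{A}$.

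Since one of the two alternatives is granted at every $i_1\in[n]$, the Proposition \ref{p3} inequality holds at every row of $\mathcal{C}$, so $\mathcal{C}$ is a B-tensor. The only subtlety is recognising that the stated bijection between off-diagonal entries transfers as multiset equality; once this is noted, both $\gamma_{i_1}^+$ and the sum of deviations are automatically preserved, and the argument is essentially a one-line invocation of Proposition \ref{p3}. I do not anticipate any genuine obstacle beyond this bookkeeping.
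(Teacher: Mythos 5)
Your proof is correct and takes essentially the same route as the paper: the paper's one-line argument simply observes that Definition \ref{def-bt} imposes only row-local conditions, so rows of B-tensors can be recombined, which is exactly the row-by-row, permutation-invariant reduction you carry out (via the equivalent criterion of Proposition \ref{p3} rather than the definition itself). Your version just makes explicit the bookkeeping that the bijection preserves the multiset of off-diagonal entries and hence $\gamma_{i_1}^+$ and the deviation sum.
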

\begin{proof}
As Definition \ref{def-bt} contains no conditions linking different rows, we can combine rows that meet the requirements, with the diagonal entries remaining unchanged, to construct a B-tensor.
\end{proof}

\begin{definition}\label{def-dbt}\textup{\cite{cy2015}}
A tensor $\mathcal{A} = (a_{i_1i_2\cdots i_m})\in T_{m,n}$ is said to be a double B-tensor if it satisfies the following conditions for all $i_1\in[n]$:
\begin{itemize}
  \item [(a)] $a_{i_1i_1\cdots i_1}>\gamma_{i_1}^+(\mathcal{A})$,
  \item [(b)] $a_{i_1i_1\cdots i_1}-\gamma_{i_1}^+(\mathcal{A})\geq \sum\limits_{(i_2,i_3,\cdots, i_m)\neq(i_1,i_1,\cdots,i_1)} (\gamma_{i_1}^+(\mathcal{A})-a_{i_1i_2\cdots i_m}),$
  \item [(c)]  for all $j_1\neq i_1$
  \begin{align*}
  &(a_{i_1i_1\cdots i_1}-\gamma_{i_1}^+(\mathcal{A}))(a_{j_1j_1\cdots j_1}-\gamma_{j_1}^+(\mathcal{A}))\\
  &>(\sum\limits_{(i_2,i_3,\cdots, i_m)\neq(i_1,i_1,\cdots,i_1)} (\gamma_{i_1}^+(\mathcal{A})-a_{i_1i_2\cdots i_m}))(\sum\limits_{(j_2,j_3,\cdots, j_m)\neq(j_1,j_1,\cdots,j_1)} (\gamma_{j_1}^+(\mathcal{A})-a_{j_1j_2\cdots j_m})).
  \end{align*}
\end{itemize}
\end{definition}

\begin{proposition}\label{p8}\textup{\cite{cy2015}}
If $\mathcal{A}\in T_{m,n}$ is a B-tensor, then it is a double B-tensor.
\end{proposition}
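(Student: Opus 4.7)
The plan is to verify the three conditions of Definition \ref{def-dbt} one at a time, exploiting that the earlier results (Lemma \ref{r1} and Proposition \ref{p3}) already give us strict versions of the row-wise estimates we need.

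First, condition (a) of Definition \ref{def-dbt} is exactly the conclusion of Lemma \ref{r1}, so nothing more is needed there. Condition (b) is the weak form of the strict row inequality in Proposition \ref{p3}; since $\mathcal{A}$ is assumed to be a B-tensor, Proposition \ref{p3} gives us the strict inequality, and ``$>$'' trivially implies ``$\geq$''. So (a) and (b) are essentially free.

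The real content is condition (c). For each $i_1\in[n]$, abbreviate
\[
D_{i_1}:=a_{i_1i_1\cdots i_1}-\gamma_{i_1}^+(\mathcal{A}),\qquad
S_{i_1}:=\sum_{(i_2,\ldots,i_m)\neq(i_1,\ldots,i_1)}\bigl(\gamma_{i_1}^+(\mathcal{A})-a_{i_1i_2\cdots i_m}\bigr).
\]
By the definition of $\gamma_{i_1}^+(\mathcal{A})$, each summand in $S_{i_1}$ is nonnegative, so $S_{i_1}\geq 0$; by Lemma \ref{r1}, $D_{i_1}>0$; and by Proposition \ref{p3}, $D_{i_1}>S_{i_1}$ for every $i_1$. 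The goal is then to show $D_{i_1}D_{j_1}>S_{i_1}S_{j_1}$ for every $j_1\neq i_1$.

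I would split into two cases. If $S_{i_1}=0$ or $S_{j_1}=0$, then $S_{i_1}S_{j_1}=0$, while $D_{i_1}D_{j_1}>0$ by positivity, and we are done. Otherwise $S_{i_1},S_{j_1}>0$, and we multiply the two strict inequalities $D_{i_1}>S_{i_1}>0$ and $D_{j_1}>S_{j_1}>0$ to obtain $D_{i_1}D_{j_1}>S_{i_1}S_{j_1}$. This gives condition (c) and completes the proof. I do not expect any genuine obstacle: the only point that needs care is handling the degenerate case in which one of the $S_{i_1}$ vanishes, since only then does the multiplication of inequalities require a separate argument.
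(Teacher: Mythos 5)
Your proof is correct and complete: conditions (a) and (b) of Definition \ref{def-dbt} do follow immediately from Lemma \ref{r1} and Proposition \ref{p3}, and your case split on whether $S_{i_1}$ or $S_{j_1}$ vanishes is exactly the care needed to multiply the strict inequalities $D_{i_1}>S_{i_1}\geq 0$ and $D_{j_1}>S_{j_1}\geq 0$. The paper itself states this proposition as a citation to the literature without reproducing a proof, and your argument is the standard one.
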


\begin{proposition}\label{p9}\textup{\cite{cy2015}}
If $\mathcal{A}\in T_{m,n}$ is a symmetric double B-tensor with $m$ is even, then it is a P tensor.
\end{proposition}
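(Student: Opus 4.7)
The plan is to prove the stronger statement that $\mathcal{A}$ is positive definite, i.e.\ $\mathcal{A}\mathbf{x}^m>0$ for every nonzero $\mathbf{x}\in\mathbb{R}^n$, which then implies the P-tensor property. Indeed, for any $\mathbf{x}\ne\mathbf{0}$ the identity $\sum_{i_1}x_{i_1}(\mathcal{A}\mathbf{x}^{m-1})_{i_1}=\mathcal{A}\mathbf{x}^m$ forces some index $i_1$ with $x_{i_1}(\mathcal{A}\mathbf{x}^{m-1})_{i_1}>0$; since $m-1$ is odd, $x_{i_1}^{m-1}$ shares the sign of $x_{i_1}$, giving $x_{i_1}^{m-1}(\mathcal{A}\mathbf{x}^{m-1})_{i_1}>0$ as required by Definition~\ref{def-pt}.

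For the positive definiteness, I would follow the row-by-row scheme developed by Song--Qi for B-tensors. For each $i_1\in[n]$ set
$$\beta_{i_1}:=a_{i_1 i_1\cdots i_1}-\gamma_{i_1}^+(\mathcal{A}),\qquad \delta_{i_1}:=\sum_{(i_2,\ldots,i_m)\ne(i_1,\ldots,i_1)}\bigl(\gamma_{i_1}^+(\mathcal{A})-a_{i_1 i_2\cdots i_m}\bigr),$$
so that Definition~\ref{def-dbt} translates to $\beta_{i_1}>0$, $\beta_{i_1}\geq\delta_{i_1}$, and $\beta_{i_1}\beta_{j_1}>\delta_{i_1}\delta_{j_1}$ for $i_1\ne j_1$. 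Rewriting each off-diagonal entry as $a_{i_1 i_2\cdots i_m}=\gamma_{i_1}^+(\mathcal{A})-(\gamma_{i_1}^+(\mathcal{A})-a_{i_1 i_2\cdots i_m})$ decomposes $\mathcal{A}$ into a piece whose row $i_1$ is the constant $\gamma_{i_1}^+(\mathcal{A})$ off the diagonal, plus a Z-tensor residual whose row sums equal $\beta_{i_1}-\delta_{i_1}\geq 0$. Evaluating $\mathcal{A}\mathbf{x}^m$ via this split, normalizing $\mathbf{x}$ so that $\max_i|x_i|=1$ is attained at some index $k$, and bounding row by row, one obtains $\mathcal{A}\mathbf{x}^m\geq 0$, with strict positivity immediate when row $k$ satisfies $\beta_k>\delta_k$.

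The decisive step, and the main obstacle, is the borderline case $\beta_k=\delta_k$, where condition (b) of Definition~\ref{def-dbt} alone cannot produce strict positivity. Condition (c) is a product-type inequality and must be engaged by pairing row $k$ with the row $\ell$ of the second-largest $|x_i|$. I would bundle the combined row-$k$ and row-$\ell$ contributions into a form resembling
$$\beta_k|x_k|^m+\beta_\ell|x_\ell|^m-2\sqrt{\delta_k\delta_\ell}\,|x_k|^{m/2}|x_\ell|^{m/2},$$
whose strict positivity for $(x_k,x_\ell)\ne(0,0)$ is equivalent, via the discriminant, to $\beta_k\beta_\ell>\delta_k\delta_\ell$, precisely condition (c). The delicate part is routing all cross-terms of $\mathcal{A}\mathbf{x}^m$ through this pair in a way compatible with the weak-dominance estimates on the remaining rows, and controlling the contribution of the constant-row part so that it does not cancel the gain; the symmetry of $\mathcal{A}$ together with the even order of $m$ is what allows the otherwise intractable sums of cross-indexed products to collapse into manageable $m$-th powers of $|x_k|$ and $|x_\ell|$, at which point the $2\times 2$ discriminant supplied by condition (c) finishes the argument.
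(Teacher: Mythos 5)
The paper gives no proof of this proposition; it is quoted from Li and Li \cite{cy2015}, whose argument runs through showing that an even order symmetric double B-tensor is positive definite. Your overall strategy therefore matches the known one, and your reduction from positive definiteness to the P-property (writing $\mathcal{A}\mathbf{x}^m=\sum_i x_i(\mathcal{A}\mathbf{x}^{m-1})_i$ and using that $x_i^{m-2}>0$ when the $i$-th term is positive) is correct. The problem is that the heart of the argument --- the proof that $\mathcal{A}\mathbf{x}^m>0$ --- is only sketched, and the sketch breaks down at exactly the two points where the real work lies.

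First, the row-wise split $a_{i_1i_2\cdots i_m}=\gamma_{i_1}^+(\mathcal{A})-(\gamma_{i_1}^+(\mathcal{A})-a_{i_1i_2\cdots i_m})$ produces a ``constant part'' $\mathcal{B}$ with $b_{i_1i_2\cdots i_m}=\gamma_{i_1}^+(\mathcal{A})$, whose associated form is
$$\mathcal{B}\mathbf{x}^m=\Bigl(\sum_{j=1}^n x_j\Bigr)^{m-1}\sum_{i=1}^n\gamma_i^+(\mathcal{A})\,x_i,$$
and this is not nonnegative unless all the $\gamma_i^+(\mathcal{A})$ coincide; moreover neither summand of your decomposition is symmetric (the constants depend on the leading index), so the diagonally dominant Z-tensor residual does not automatically yield a nonnegative form either. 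This is precisely the obstruction that forces Qi--Song (for B-tensors) and Li--Li (for double B-tensors) into an iterative extraction of partially-all-one tensors supported on index blocks on which the relevant maxima agree --- a construction that genuinely uses symmetry and is entirely absent from your sketch, yet is what makes ``bounding row by row'' legitimate. Second, your treatment of the borderline rows asserts, but does not show, that all cross-terms of $\mathcal{A}\mathbf{x}^m$ can be funneled into the single term $2\sqrt{\delta_k\delta_\ell}\,|x_k|^{m/2}|x_\ell|^{m/2}$; those cross terms involve every coordinate with mixed exponents, and you yourself flag this routing as ``the delicate part'' without supplying it. The discriminant observation that condition (c) of Definition \ref{def-dbt} is exactly what makes the resulting binary form positive is fine, but it only helps once that reduction has actually been carried out. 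As it stands the proposal is a plausible plan with the two decisive steps missing, not a proof.
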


\begin{proposition}\label{p10}\textup{\cite{cy2015}}
If $\mathcal{A}\in T_{m,n}$ is a double B-tensor, then precisely one of the following is satisfied.
\begin{itemize}
  \item [(a)]  $\mathcal{A}$ is a B-tensor, or
  \item [(b)]  there is a unique $j_1\in[n]$ such that
$$a_{j_1j_1\cdots j_1}-\gamma_{j_1}^+(\mathcal{A})=\sum\limits_{(j_2,j_3,\cdots,j_m)\neq(j_1,j_1,\cdots,j_1)}(\gamma_{j_1}^+(\mathcal{A})-a_{j_1j_2\cdots j_m}),$$
and for all $i_1\in[n]\setminus\{j_1\}$,
$$a_{i_1i_1\cdots i_1}-\gamma_{i_1}^+(\mathcal{A})> \sum\limits_{(i_2,i_3,\cdots,i_m)\neq(i_1,i_1,\cdots,i_1)}(\gamma_{i_1}^+(\mathcal{A})-a_{i_1i_2\cdots i_m}).$$
\end{itemize}
\end{proposition}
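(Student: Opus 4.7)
\emph{Proof plan.} The plan is to recast the dichotomy in terms of condition (b) of Definition \ref{def-dbt} and then exploit condition (c) to rule out multiple saturations. First I would invoke Proposition \ref{p3} to observe that $\mathcal{A}$ is a B-tensor if and only if the inequality in (b) of Definition \ref{def-dbt} is strict for every $i_1 \in [n]$. Consequently, if the double B-tensor $\mathcal{A}$ fails to be a B-tensor, there must exist at least one index $j_1 \in [n]$ at which (b) is attained with equality, namely $a_{j_1\cdots j_1} - \gamma_{j_1}^+(\mathcal{A}) = \sum_{(j_2,\ldots,j_m)\neq(j_1,\ldots,j_1)}(\gamma_{j_1}^+(\mathcal{A}) - a_{j_1j_2\cdots j_m})$. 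This already separates the two cases of the conclusion; it remains to show that such a $j_1$ is unique and that (a) and (b) cannot simultaneously hold.

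The key step is uniqueness. Abbreviating $\Delta_{i_1} := a_{i_1\cdots i_1} - \gamma_{i_1}^+(\mathcal{A})$ and letting $S_{i_1}$ denote the right-hand sum appearing in condition (b), I would suppose for contradiction that two distinct indices $j_1$ and $k_1$ both satisfy $\Delta_{j_1} = S_{j_1}$ and $\Delta_{k_1} = S_{k_1}$. Condition (c) of Definition \ref{def-dbt} applied to the pair $(j_1, k_1)$ forces $\Delta_{j_1}\Delta_{k_1} > S_{j_1} S_{k_1}$, yet substituting the two assumed equalities collapses the left-hand side to the right-hand side, producing the outright contradiction $S_{j_1}S_{k_1} > S_{j_1}S_{k_1}$. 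Hence at most one index can saturate (b), and by the assumption that $\mathcal{A}$ is not a B-tensor every other index must continue to satisfy it strictly, which is exactly case (b) of the proposition.

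Mutual exclusivity of (a) and (b) is then automatic, since case (b) exhibits an equality that, by Proposition \ref{p3}, precludes the B-tensor property of case (a). The only substantive step is the substitution in the uniqueness argument, and it is essentially painless: condition (c) pairs cleanly with the two equality hypotheses, and in particular no auxiliary sign analysis on $S_{j_1}$ or $S_{k_1}$ is required, since the strict inequality $S_{j_1}S_{k_1} > S_{j_1}S_{k_1}$ is absurd regardless of whether those products vanish. I therefore do not foresee any genuine obstacle beyond correctly bookkeeping the indices.
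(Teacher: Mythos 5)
Your argument is correct: Proposition \ref{p3} identifies the B-tensor property with strictness of condition (b) of Definition \ref{def-dbt} at every index, and condition (c) applied to two saturating indices $j_1\neq k_1$ immediately yields the absurdity $S_{j_1}S_{k_1}>S_{j_1}S_{k_1}$, with no sign analysis needed. Note that the paper itself gives no proof of this proposition --- it is quoted from \cite{cy2015} --- so there is nothing to compare against; your proof is the natural one and is sound as written.
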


\begin{definition}\label{def-ct}\textup{\cite{cq2016}}
A tensor $\mathcal{A} = (a_{i_1i_2\cdots i_m})\in T_{m,n}$ is said to be a circulant tensor if $a_{j_1j_2\cdots j_m}\equiv a_{k_1k_2\cdots k_m}$ when $k_l=j_l+1$ for $j_l,k_l\in[n]$.
\end{definition}

\begin{proposition}\label{p11}
If $\mathcal{A}\in T_{m,n}$ is a circulant tensor, then then the following are equivalent.
\begin{itemize}
  \item [(a)] $\mathcal{A}$ is a B-tensor.
  \item [(b)] $\mathcal{A}$ is a double B-tensor.
  \item [(c)] $a_{11\cdots 1}-\gamma_{1}^+(\mathcal{A})> \sum\limits_{(i_2,i_3,\cdots,i_m)\neq(1,1,\cdots,1)}(\gamma_{1}^+(\mathcal{A})-a_{1i_2\cdots i_m}).$
\end{itemize}
\end{proposition}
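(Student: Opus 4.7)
The plan is to exploit the rotational invariance of circulant tensors to show that the per-row quantities appearing in Definition \ref{def-bt}, Proposition \ref{p3}, and Definition \ref{def-dbt} do not actually depend on the row index, which collapses all the per-row conditions into a single condition on row $1$.

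First I would establish three shift-invariance observations. Using Definition \ref{def-ct}, an inductive application of the index-shift rule $k_l = j_l + 1$ yields $a_{i_1i_1\cdots i_1}=a_{11\cdots 1}$ for every $i_1\in[n]$, and also shows that the multiset of off-diagonal entries in the $i_1$-th row is a permutation of the multiset of off-diagonal entries in the $1$st row. Consequently
\[
\gamma_{i_1}^+(\mathcal{A}) = \gamma_1^+(\mathcal{A}) \quad \text{and} \quad \sum_{(i_2,\cdots,i_m)\neq (i_1,\cdots,i_1)} \!\!\bigl(\gamma_{i_1}^+(\mathcal{A})-a_{i_1i_2\cdots i_m}\bigr) \;=\; \sum_{(i_2,\cdots,i_m)\neq (1,\cdots,1)} \!\!\bigl(\gamma_{1}^+(\mathcal{A})-a_{1i_2\cdots i_m}\bigr)
\]
for every $i_1\in[n]$. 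Denote the common values by $D := a_{11\cdots 1}-\gamma_1^+(\mathcal{A})$ and $S := \sum_{(i_2,\cdots,i_m)\neq(1,\cdots,1)}(\gamma_1^+(\mathcal{A})-a_{1i_2\cdots i_m})$; note $S\ge 0$ automatically.

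Next I would handle the implications. For $(a)\Leftrightarrow(c)$, invoke Proposition \ref{p3}: the defining inequality $a_{i_1i_1\cdots i_1}-\gamma_{i_1}^+(\mathcal{A}) > \sum(\gamma_{i_1}^+(\mathcal{A})-a_{i_1i_2\cdots i_m})$ reduces by the above to the single inequality $D>S$, which is precisely (c). The implication $(a)\Rightarrow(b)$ is Proposition \ref{p8}. The remaining implication $(b)\Rightarrow(c)$ is where the slight subtlety lies: double B-tensor condition (b) in Definition \ref{def-dbt} only gives $D\ge S$, so one cannot conclude strict inequality from that clause alone. Instead I would apply Definition \ref{def-dbt}(c) with any choice $j_1\neq i_1$; by shift invariance both sides become $D^2 > S^2$. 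Since Definition \ref{def-dbt}(a) forces $D>0$ and $S\ge 0$, taking square roots gives $D>S$, which is (c).

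The only mildly delicate point, and the step I would check most carefully, is the $(b)\Rightarrow(c)$ direction: one needs to notice that the purely diagonal-type inequality in Definition \ref{def-dbt}(b) is too weak to recover strictness, and that strictness must be extracted from the product condition Definition \ref{def-dbt}(c) after using circulance to equate the two factors. Everything else is bookkeeping that follows from the shift-invariance of the row-level quantities.
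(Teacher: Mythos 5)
Your proposal is correct and follows essentially the same route as the paper: $(a)\Rightarrow(b)$ via Proposition \ref{p8}, $(b)\Rightarrow(c)$ by applying the product condition of Definition \ref{def-dbt} with two distinct rows, using circulance to reduce it to $D^2>S^2$ and then extracting $D>S$ from $D>0$, and $(c)\Rightarrow(a)$ via shift-invariance of the row quantities together with Proposition \ref{p3}. Your explicit remark that clause $(b)$ of Definition \ref{def-dbt} alone is too weak and that strictness must come from the product condition is exactly the point the paper's proof also relies on, just stated more carefully.
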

\begin{proof}
``$(a)\Rightarrow(b)$''. That is a result of Proposition \ref{p8}.

``$(b)\Rightarrow(c)$''. Let $\mathcal{A}$ is a double B-tensor, then for any $i_1\in[n]\setminus\{1\}$,
\begin{align*}
  &(a_{11\cdots 1}-\gamma_{1}^+(\mathcal{A}))(a_{i_1i_1\cdots i_1}-\gamma_{i_1}^+(\mathcal{A}))\\
  &>(\sum\limits_{(i_2,i_3,\cdots, i_m)\neq(1,1,\cdots,1)} (\gamma_{1}^+(\mathcal{A})-a_{1i_2\cdots i_m}))(\sum\limits_{(i_2,i_3,\cdots, i_m)\neq(i_1,i_1,\cdots,i_1)} (\gamma_{i_1}^+(\mathcal{A})-a_{i_1i_2\cdots i_m})).
  \end{align*}
Since $\mathcal{A}$ is a circulant tensor, then
$$(a_{11\cdots 1}-\gamma_{1}^+(\mathcal{A}))^2>(\sum\limits_{(i_2,i_3,\cdots, i_m)\neq(1,1,\cdots,1)} (\gamma_{1}^+(\mathcal{A})-a_{1i_2\cdots i_m}))^2.$$
According the definitions of double B-tensor and $\gamma_{1}^+(\mathcal{A})$, there is
$$(a_{11\cdots 1}-\gamma_{1}^+(\mathcal{A}))>(\sum\limits_{(i_2,i_3,\cdots, i_m)\neq(1,1,\cdots,1)} (\gamma_{1}^+(\mathcal{A})-a_{1i_2\cdots i_m}))$$

``$(c)\Rightarrow(a)$''. As $\mathcal{A}$ is a circulant tensor, and
$a_{11\cdots 1}-\gamma_{1}^+(\mathcal{A})> \sum\limits_{(i_2,i_3,\cdots,i_m)\neq(1,1,\cdots,1)}(\gamma_{1}^+(\mathcal{A})-a_{1i_2\cdots i_m}),$
then
$a_{i_1i_1\cdots i_1}-\gamma_{i_1}^+(\mathcal{A})> \sum\limits_{(i_2,i_3,\cdots,i_m)\neq(i_1,i_1,\cdots,i_1)}(\gamma_{i_1}^+(\mathcal{A})-a_{i_1i_2\cdots i_m})$
holds for all $i_1\in[n]$. By Proposition \ref{p3}, we can obtain the conclusion.
\end{proof}

Let $\underline{\mathcal{A}}=(\underline{a}_{i_1i_2\cdots i_m})$, $\overline{\mathcal{A}}=(\overline{a}_{i_1i_2\cdots i_m})\in T_{m,n}$ , and $\underline{\mathcal{A}}\leq\overline{\mathcal{A}}$, where $\underline{\mathcal{A}}\leq\overline{\mathcal{A}}$ are to be understood componentwise. The set of tensors
$$\mathcal{A}^I=[\underline{\mathcal{A}},\overline{\mathcal{A}}]= \{\mathcal{A}:\underline{\mathcal{A}}\leq \mathcal{A}\leq \overline{\mathcal{A}}\}$$
is called an interval tensor.

Denote $\mathcal{A}^c=(a^c_{i_1i_2\cdots i_m})=\frac{\underline{\mathcal{A}}+\overline{\mathcal{A}}}{2}$ and $\Delta=(\delta_{i_1i_2\cdots i_m})=\frac{\overline{\mathcal{A}}-\underline{\mathcal{A}}}{2}$, then $\mathcal{A}^I=[\mathcal{A}^c- \Delta, \mathcal{A}^c+ \Delta]$. Clearly, $\Delta$ is a nonnegative tensor, in other words, the values of all its entries $\delta_{i_1i_2\cdots i_m}\geq0$. $\mathcal{A}^I$ is said to be symmetric if both $\mathcal{A}^c$ and $\Delta$ are symmetric.

\begin{definition}\textup{\cite{bf2022,sr2020,boz2020}}
An interval tensor $\mathcal{A}^I$ is said to be an interval B-(double B-, Z-, P-)tensor if each tensor $\mathcal{A}\in\mathcal{A}^I$ is a B-(double B-, Z-, P-)tensor.
\end{definition}


\section{Interval B-tensors}
\setcounter{section}{3}
This section investigates the properties and criteria for interval B-tensors, and establishes equivalent characterizations that depend solely on the extreme point tensors of the interval. The following theorem is the central result of this section, providing a verifiable necessary and sufficient condition for interval B-tensors.
\begin{theorem}\label{th1}
Let $\mathcal{A}^I$ be an interval tensor in $T_{m,n}$. $\mathcal{A}^I$ is an interval B-tensor if and only if
\begin{itemize}
  \item [(a)] $\sum\limits_{i_2,i_3,\cdots,i_m=1}^n\underline{a}_{i_1i_2\cdots i_m}>0$,
  \item [(b)] $\sum\limits_{(i_2,i_3,\cdots,i_m)\neq(j_2,j_3,\cdots,j_m)}\underline{a}_{i_1i_2\cdots i_m}>(n^{m-1}-1)\overline{a}_{i_1j_2\cdots j_m}$ for all $(j_2,j_3,\cdots,j_m)\neq(i_1,i_1,\cdots,i_1)$,
\end{itemize}
are satisfied for each $i_1\in [n]$.
\end{theorem}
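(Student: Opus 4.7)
The plan is to prove the theorem by first rewriting the B-tensor condition (b) of Definition \ref{def-bt} in the subtracted form
$$\sum_{(i_2,\ldots,i_m)\neq(j_2,\ldots,j_m)} a_{i_1i_2\cdots i_m} > (n^{m-1}-1)\, a_{i_1j_2\cdots j_m},$$
obtained by multiplying Definition \ref{def-bt}(b) by $n^{m-1}$ and moving the term $a_{i_1j_2\cdots j_m}$ to the left. This form is matched to conditions (a) and (b) of the theorem, so the rest is a componentwise monotonicity argument on the interval together with a careful choice of a single tensor in $\mathcal{A}^I$ for the converse.

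For sufficiency, assume (a) and (b) hold. For any $\mathcal{A}=(a_{i_1\cdots i_m})\in\mathcal{A}^I$ and any $i_1\in[n]$, the inequality $a_{i_1i_2\cdots i_m}\ge \underline{a}_{i_1i_2\cdots i_m}$ immediately gives
$$\sum_{i_2,\ldots,i_m=1}^n a_{i_1i_2\cdots i_m}\ge \sum_{i_2,\ldots,i_m=1}^n \underline{a}_{i_1i_2\cdots i_m}>0,$$
which is Definition \ref{def-bt}(a). For condition (b) of the B-tensor, fix $(j_2,\ldots,j_m)\neq (i_1,\ldots,i_1)$; since $n^{m-1}-1\ge 0$, I combine $a_{i_1i_2\cdots i_m}\ge \underline{a}_{i_1i_2\cdots i_m}$ on the left and $\overline{a}_{i_1j_2\cdots j_m}\ge a_{i_1j_2\cdots j_m}$ on the right with (b) to obtain
$$\sum_{(i_2,\ldots,i_m)\neq(j_2,\ldots,j_m)} a_{i_1i_2\cdots i_m} \ge \sum_{(i_2,\ldots,i_m)\neq(j_2,\ldots,j_m)} \underline{a}_{i_1i_2\cdots i_m} > (n^{m-1}-1)\overline{a}_{i_1j_2\cdots j_m}\ge (n^{m-1}-1)a_{i_1j_2\cdots j_m},$$
which, after reversing the rewriting above, yields Definition \ref{def-bt}(b). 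Hence every $\mathcal{A}\in\mathcal{A}^I$ is a B-tensor.

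For necessity, assume $\mathcal{A}^I$ is an interval B-tensor. Condition (a) is obtained by applying Definition \ref{def-bt}(a) to the particular tensor $\underline{\mathcal{A}}\in\mathcal{A}^I$. For condition (b), fix $i_1\in[n]$ and $(j_2,\ldots,j_m)\neq(i_1,\ldots,i_1)$ and construct a tensor $\mathcal{A}^*\in\mathcal{A}^I$ by setting, in the $i_1$-th row, $a^*_{i_1i_2\cdots i_m}=\underline{a}_{i_1i_2\cdots i_m}$ for every $(i_2,\ldots,i_m)\neq(j_2,\ldots,j_m)$ and $a^*_{i_1j_2\cdots j_m}=\overline{a}_{i_1j_2\cdots j_m}$, while the remaining rows (those with first index different from $i_1$) are filled arbitrarily within the interval, e.g.\ by the corresponding entries of $\underline{\mathcal{A}}$. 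This $\mathcal{A}^*$ lies in $\mathcal{A}^I$, so it is a B-tensor; applying the rewritten form of Definition \ref{def-bt}(b) to $\mathcal{A}^*$ at this $i_1$ and $(j_2,\ldots,j_m)$ recovers (b) verbatim.

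The one point that needs care is the last step of the necessity direction: we need to be sure that the single-row modification producing $\mathcal{A}^*$ does not interfere with the B-tensor property. Since the B-tensor conditions of Definition \ref{def-bt} for index $i_1$ only involve entries $a_{i_1i_2\cdots i_m}$, rows can be treated independently (this independence is precisely the content of Proposition \ref{p5}), so $\mathcal{A}^*$ is a legitimate element of $\mathcal{A}^I$ and the hypothesis forces it to satisfy (b') at $i_1$. This decoupling across rows is the only mildly delicate step; once it is noted, the remainder is a direct monotonicity computation.
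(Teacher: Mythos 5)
Your proof is correct and follows essentially the same route as the paper: the same rewriting of Definition \ref{def-bt}(b) into the form with the $(j_2,\ldots,j_m)$-term removed, the same monotonicity argument for sufficiency, and the same single-entry extreme-point tensor for necessity. The only difference is your closing worry about row decoupling, which is unnecessary here — you do not need $\mathcal{A}^*$ to be verified as a B-tensor independently; it is one automatically because it lies in $\mathcal{A}^I$ and the interval is assumed to be an interval B-tensor.
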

\begin{proof} ``{\bf Necessity.}'' As $\underline{\mathcal{A}}\in\mathcal{A}^I$, then $\underline{\mathcal{A}}$ is a B-tensor. According $(a)$ of Definition \ref{def-bt}, $(a)$ of this theorem is held for each $i_1\in [n]$.

For $i_1\in[n]$, and $(j_2,j_3,\cdots,j_m)\neq (i_1,i_1,\cdots,i_1)$, $j_2,j_3,\cdots,j_m\in[n]$.
Define $\mathcal{A}=(a_{i_1i_2\cdots i_m})\in T_{m,n}$ as
$$a_{i_1i_2\cdots i_m}=\begin{cases}
\overline{a}_{i_1i_2\cdots i_m},\mbox{ if }(i_2,i_3,\cdots,i_m)=(j_2,j_3,\cdots,j_m),\\
\underline{a}_{i_1i_2\cdots i_m},\mbox{ otherwise}.
\end{cases}$$
Obviously $\mathcal{A}\in\mathcal{A}^I$, then all such tensors are a B-tensor. According $(b)$ of Definition \ref{def-bt}, $(b)$ of this theorem is held for each $i_1\in [n]$.

``{\bf Sufficiency.}'' Since for any $\mathcal{A}\in\mathcal{A}^I$, there is $\mathcal{A}\geq\underline{\mathcal{A}}$, then $(a)$ of Definition \ref{def-bt} is satisfied for each $\mathcal{A}\in\mathcal{A}^I$ can be deduced by $\sum\limits_{i_2,i_3,\cdots,i_m=1}^n\underline{a}_{i_1i_2\cdots i_m}>0$.

As for any $(j_2,j_3,\cdots,j_m)\neq(i_1,i_1,\cdots,i_1)$,
$$\frac{1}{n^{m-1}}(\sum\limits_{i_2,i_3,\cdots, i_m=1}^n a_{i_1i_2\cdots i_m})>a_{i_1j_2\cdots j_m}\Leftrightarrow \sum\limits_{(i_2,i_3,\cdots,i_m)\neq(j_2,j_3,\cdots,j_m)}a_{i_1i_2\cdots i_m}>(n^{m-1}-1)a_{i_1j_2\cdots j_m},$$
the final inequality contains no repeated entries, therefore, $(b)$ of Definition \ref{def-bt} holds for all $\mathcal{A}\in\mathcal{A}^I$ if and only if the final inequality holds for $\overline{a}_{i_1j_2\cdots j_m}$ on the right side and $\underline{a}_{i_1i_2\cdots i_m}$ on the left side.
\end{proof}

This theorem shows that determining whether an interval tensor is of type B can be fully decided by the row sums of the lower-bound tensor $\underline{\mathcal{A}}$ and specific off-diagonal entries of the upper-bound tensor $\overline{\mathcal{A}}$. We then present several important corollaries of this theorem, which further simplify the criteria or reveal structural properties of interval B-tensors.

\begin{corollary}\label{c1}
Let $\mathcal{A}^I$ and $\mathcal{B}^I$ be two interval tensors in $T_{m,n}$, where $\underline{b}_{i_1i_1\cdots i_1}=\overline{b}_{i_1i_1\cdots i_1}=\underline{a}_{i_1i_1\cdots i_1}$ for all $i_1\in [n]$, and $\underline{b}_{i_1i_2\cdots i_m}=\underline{a}_{i_1i_2\cdots i_m}$ and $\overline{b}_{i_1i_2\cdots i_m}=\overline{a}_{i_1i_2\cdots i_m}$ for all off-diagonal. Then $\mathcal{A}^I$ is an interval B-tensor if and only if $\mathcal{B}^I$ is an interval B-tensor.
\end{corollary}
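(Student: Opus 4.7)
The plan is to invoke Theorem \ref{th1} directly, since that theorem has already reduced interval B-tensor verification to two conditions that depend only on the entries of $\underline{\mathcal{A}}$ and on the off-diagonal entries of $\overline{\mathcal{A}}$. My first task is therefore to record which pieces of data each condition actually reads.

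Next I would unpack the hypotheses on $\mathcal{B}^I$: the off-diagonal assumption $\underline{b}_{i_1i_2\cdots i_m}=\underline{a}_{i_1i_2\cdots i_m}$ and $\overline{b}_{i_1i_2\cdots i_m}=\overline{a}_{i_1i_2\cdots i_m}$ together with the diagonal identity $\underline{b}_{i_1i_1\cdots i_1}=\underline{a}_{i_1i_1\cdots i_1}$ give $\underline{\mathcal{B}}=\underline{\mathcal{A}}$ entrywise, while $\overline{\mathcal{B}}$ and $\overline{\mathcal{A}}$ agree at every off-diagonal position (the diagonal entries of $\overline{\mathcal{B}}$ may differ, but they are irrelevant to Theorem \ref{th1}).

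Then I would verify the two conditions of Theorem \ref{th1} translate transparently. Condition (a) is the full row-sum of $\underline{\mathcal{A}}$, which equals the corresponding row-sum of $\underline{\mathcal{B}}$, so condition (a) is preserved in both directions. For condition (b), the right-hand side is $\overline{a}_{i_1j_2\cdots j_m}$ with the side restriction $(j_2,\ldots,j_m)\neq(i_1,\ldots,i_1)$, so that entry is off-diagonal and equals $\overline{b}_{i_1j_2\cdots j_m}$. On the left-hand side, the summation is over all $(i_2,\ldots,i_m)\neq(j_2,\ldots,j_m)$; since $(j_2,\ldots,j_m)\neq(i_1,\ldots,i_1)$, the diagonal index $(i_1,\ldots,i_1)$ lies in this summation range, but the corresponding term is $\underline{a}_{i_1i_1\cdots i_1}=\underline{b}_{i_1i_1\cdots i_1}$ by hypothesis, and the remaining terms are off-diagonal, so the partial sum is identical for $\underline{\mathcal{A}}$ and $\underline{\mathcal{B}}$.

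Combining these observations, condition (a) (resp.\ (b)) of Theorem \ref{th1} holds for $\mathcal{A}^I$ if and only if it holds for $\mathcal{B}^I$; applying the theorem in both directions yields the equivalence. There is no real obstacle here beyond bookkeeping: the only point that needs a moment's attention is confirming that the diagonal index appears in the left-hand summation of condition (b), which is exactly what the restriction $(j_2,\ldots,j_m)\neq(i_1,\ldots,i_1)$ guarantees.
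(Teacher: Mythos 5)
Your proposal is correct and follows essentially the same route as the paper: both reduce the claim to Theorem \ref{th1} and observe that the diagonal entries enter its conditions only through $\underline{a}_{i_1i_1\cdots i_1}$, which the hypotheses force to coincide for $\mathcal{A}^I$ and $\mathcal{B}^I$. Your version merely spells out the bookkeeping (where the diagonal index sits in the sum of condition (b)) that the paper leaves implicit.
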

\begin{proof}
By Theorem \ref{th1}, diagonal entries appear solely as $\underline{a}_{i_1i_1\cdots i_1}$ for all $i_1\in[n]$, so other values of diagonal entries are irrelevant. Hence, $\mathcal{A}^I$ satisfies the conditions $(a)$ and $(b)$ of Theorem \ref{th1} if and only if $\mathcal{B}^I$ satisfies the conditions $(a)$ and $(b)$ of  Theorem \ref{th1}, that is $\mathcal{A}^I$ is an interval B-tensor if and only if $\mathcal{B}^I$ is an interval B-tensor.
\end{proof}

The following corollary offers a way to simplify the verification by comparing the sizes of elements within the interval.
\begin{corollary}\label{c2}
Let $\mathcal{A}^I$ and $\mathcal{B}^I$ be two interval tensors in $T_{m,n}$, and
\begin{align*}K=\{&(i_1,j_2,\cdots,j_m):i_1,j_2,\cdots,j_m\in[n], (j_2,j_3,\cdots,j_m)\neq(i_1,i_1,\cdots,i_1),\\
& \exists (i_2,i_3,\cdots,i_m)\neq(i_1,i_1,\cdots,i_1) \mbox{ and }(i_2,i_3,\cdots,i_m)\neq(j_2,j_3,\cdots,j_m) \\
&\mbox{ s.t. }\overline{a}_{i_1j_2\cdots j_m}\leq \underline{a}_{i_1i_2\cdots i_m}\}.
\end{align*}
Define $\mathcal{B}^I$ as
$\underline{b}_{i_1j_2\cdots j_m}=\overline{b}_{i_1j_2\cdots j_m}=\underline{a}_{i_1j_2\cdots j_m}$ for all $(i_1,j_2,\cdots, j_m)\in K$, and $\underline{b}_{i_1j_2\cdots j_m}=\underline{a}_{i_1j_2\cdots j_m}$ and $\overline{b}_{i_1j_2\cdots j_m}=\overline{a}_{i_1j_2\cdots j_m}$ for otherwise. Then $\mathcal{A}^I$ is an interval B-tensor if and only if $\mathcal{B}^I$ is an interval B-tensor.
\end{corollary}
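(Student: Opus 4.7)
The plan is to reduce everything to Theorem \ref{th1} and compare its two conditions between $\mathcal{A}^I$ and $\mathcal{B}^I$. A preliminary observation is that $\mathcal{B}^I \subseteq \mathcal{A}^I$ entrywise: for $(i_1,j_2,\ldots,j_m) \in K$ the $\mathcal{B}^I$-entry is collapsed to the value $\underline{a}_{i_1 j_2\cdots j_m}$, which still lies in $[\underline{a}_{i_1 j_2\cdots j_m}, \overline{a}_{i_1 j_2\cdots j_m}]$, while for indices outside $K$ the bounds coincide. Consequently, if $\mathcal{A}^I$ is an interval B-tensor then so is $\mathcal{B}^I$, which settles the easy direction immediately.

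For the converse, I would assume that $\mathcal{B}^I$ satisfies (a) and (b) of Theorem \ref{th1} and derive the same for $\mathcal{A}^I$. Since $\underline{b}_{i_1 i_2\cdots i_m}=\underline{a}_{i_1 i_2\cdots i_m}$ at every position, condition (a) transfers verbatim. For condition (b), fix $i_1\in[n]$ and $(j_2,\ldots,j_m)\neq(i_1,\ldots,i_1)$. When $(i_1,j_2,\ldots,j_m)\notin K$, one has $\overline{b}_{i_1 j_2\cdots j_m}=\overline{a}_{i_1 j_2\cdots j_m}$, so the $\mathcal{B}^I$-inequality is exactly what is required.

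The hard part is the case $(i_1,j_2,\ldots,j_m)\in K$, because there the $\mathcal{B}^I$-inequality uses $\overline{b}_{i_1 j_2\cdots j_m}=\underline{a}_{i_1 j_2\cdots j_m}$ on the right, which is weaker than the target involving $\overline{a}_{i_1 j_2\cdots j_m}$. To upgrade it, I would invoke the defining property of $K$: there exists an off-diagonal $(i_2^*,\ldots,i_m^*)$ distinct from $(j_2,\ldots,j_m)$ with $\overline{a}_{i_1 j_2\cdots j_m}\leq \underline{a}_{i_1 i_2^*\cdots i_m^*}$. Applying condition (b) for $\mathcal{B}^I$ at the index $(i_1,i_2^*,\ldots,i_m^*)$ and using the chain $\overline{b}_{i_1 i_2^*\cdots i_m^*}\geq \underline{a}_{i_1 i_2^*\cdots i_m^*}\geq \overline{a}_{i_1 j_2\cdots j_m}$ yields
\[
\sum_{(k_2,\ldots,k_m)\neq(i_2^*,\ldots,i_m^*)} \underline{a}_{i_1 k_2\cdots k_m} \;>\; (n^{m-1}-1)\,\overline{a}_{i_1 j_2\cdots j_m}.
\]
The remaining step is to swap the excluded index via the identity
\[
\sum_{(k_2,\ldots,k_m)\neq(j_2,\ldots,j_m)} \underline{a}_{i_1 k_2\cdots k_m} \;-\; \sum_{(k_2,\ldots,k_m)\neq(i_2^*,\ldots,i_m^*)} \underline{a}_{i_1 k_2\cdots k_m} \;=\; \underline{a}_{i_1 i_2^*\cdots i_m^*}-\underline{a}_{i_1 j_2\cdots j_m},
\]
which is nonnegative because $\underline{a}_{i_1 i_2^*\cdots i_m^*}\geq \overline{a}_{i_1 j_2\cdots j_m}\geq \underline{a}_{i_1 j_2\cdots j_m}$, and this lifts the displayed strict inequality to the target bound for $\mathcal{A}^I$.

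The main obstacle will really be this last piece of bookkeeping, namely tracking which term is excluded from the row sum while preserving strict inequality when the auxiliary index $(i_2^*,\ldots,i_m^*)$ is introduced; once that index is extracted from the definition of $K$, the whole argument reduces to two applications of Theorem \ref{th1} at related positions.
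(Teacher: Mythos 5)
Your proof is correct and follows essentially the same route as the paper's: necessity from $\mathcal{B}^I\subseteq\mathcal{A}^I$, and sufficiency by applying Theorem \ref{th1}(b) for $\mathcal{B}^I$ at the auxiliary index $(i_2^*,\ldots,i_m^*)$ supplied by the definition of $K$ and then swapping the excluded term in the row sum. If anything, your chain $\overline{b}_{i_1i_2^*\cdots i_m^*}\geq\underline{a}_{i_1i_2^*\cdots i_m^*}\geq\overline{a}_{i_1j_2\cdots j_m}$ is cleaner than the paper's, which routes through the step $\overline{b}_{i_1i_2\cdots i_m}\geq\overline{a}_{i_1i_2\cdots i_m}$ (an inequality that, as written, points the wrong way when $(i_1,i_2,\ldots,i_m)\in K$).
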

\begin{proof}
As $\mathcal{B}^I\subset \mathcal{A}^I$, the necessity is evident, we proceed to discuss the sufficiency.

When $K=\emptyset$, $\mathcal{B}^I=\mathcal{A}^I$, is trivial. Suppose $K\neq\emptyset$, compare $\mathcal{B}^I$ and $\mathcal{A}^I$, it is sufficient to show that $(b)$ of Theorem \ref{th1} holds when $(i_1,j_2,\cdots, j_m)\in K$ for $\mathcal{A}^I$.

Take $(i_1,j_2,\cdots, j_m)\in K$, and let
$(i_2,i_3,\cdots,i_m)\neq(i_1,i_1,\cdots,i_1)$ s.t. $\underline{a}_{i_1j_2\cdots j_m}\leq \underline{a}_{i_1i_2\cdots i_m}$ and $\overline{a}_{i_1j_2\cdots j_m}\leq \overline{a}_{i_1i_2\cdots i_m}$.
Then
\begin{align*}\sum\limits_{(k_2,k_3,\cdots,k_m)\neq(j_2,j_3,\cdots,j_m)}\underline{a}_{i_1k_2\cdots k_m}&\geq\sum\limits_{(k_2,k_3,\cdots,k_m)\neq(i_2,i_3,\cdots,i_m)}\underline{a}_{i_1k_2\cdots k_m}=\sum\limits_{(k_2,k_3,\cdots,k_m)\neq(i_2,i_3,\cdots,i_m)}\underline{b}_{i_1k_2\cdots k_m}\\
&>(n^{m-1}-1)\overline{b}_{i_1i_2\cdots i_m}\geq(n^{m-1}-1)\overline{a}_{i_1i_2\cdots i_m}\\
&\geq(n^{m-1}-1)\overline{a}_{i_1j_2\cdots j_m}.
\end{align*}
The second inequality holds as $\mathcal{B}^I$ is an interval B-tensor, which implies that $(b)$ of Theorem \ref{th1} holds for $(i_1,i_2,\cdots,i_m)$.
\end{proof}

\begin{corollary}\label{c3}
Let $\mathcal{A}^I$ be an interval tensor in $T_{m,n}$. $\mathcal{A}^I$ is an interval B-tensor if and only if for all $i_1\in[n]$,
$$\sum\limits_{i_2,i_3,\cdots,i_m=1}^n \underline{a}_{i_1i_2\cdots i_m}>\max\{0,(n^{m-1}-1)\overline{a}_{i_1j_2\cdots j_m}+\underline{a}_{i_1j_2\cdots j_m}\}\mbox{ for all }(j_2,j_3,\cdots, j_m)\neq(i_1,i_1,\cdots, i_1).$$
\end{corollary}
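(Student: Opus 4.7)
The plan is to derive Corollary \ref{c3} as a direct algebraic reformulation of Theorem \ref{th1}, consolidating its two conditions (a) and (b) into the single inequality appearing in the statement.

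First, I would invoke Theorem \ref{th1}: $\mathcal{A}^I$ is an interval B-tensor if and only if both (a) $\sum_{i_2,\cdots,i_m=1}^n \underline{a}_{i_1i_2\cdots i_m}>0$ and (b) $\sum_{(i_2,\cdots,i_m)\neq(j_2,\cdots,j_m)}\underline{a}_{i_1i_2\cdots i_m}>(n^{m-1}-1)\overline{a}_{i_1j_2\cdots j_m}$ hold for every $i_1\in[n]$ and every $(j_2,\cdots,j_m)\neq(i_1,\cdots,i_1)$.

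Second, I would rewrite (b) by adding and subtracting the missing summand $\underline{a}_{i_1j_2\cdots j_m}$, so that the left-hand side becomes the full row sum $\sum_{i_2,\cdots,i_m=1}^n \underline{a}_{i_1i_2\cdots i_m}$ and the term $\underline{a}_{i_1j_2\cdots j_m}$ moves to the right, giving the equivalent form
\[
\sum_{i_2,\cdots,i_m=1}^n \underline{a}_{i_1i_2\cdots i_m} > (n^{m-1}-1)\overline{a}_{i_1j_2\cdots j_m}+\underline{a}_{i_1j_2\cdots j_m}.
\]
Since this must hold simultaneously with (a), the conjunction of (a) and (b) is exactly the assertion that $\sum_{i_2,\cdots,i_m=1}^n \underline{a}_{i_1i_2\cdots i_m}$ strictly exceeds the maximum of $0$ and the right-hand side above, for every admissible $(j_2,\cdots,j_m)$. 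This yields the inequality in the corollary.

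There is essentially no obstacle here: the proof is one line of arithmetic rearrangement plus the observation that combining the two strict lower bounds into a single strict lower bound via $\max$ is equivalent. The only point worth being careful about is ensuring the quantification is preserved, namely that the $\max$ on the right is taken for each fixed $(j_2,\cdots,j_m)\neq(i_1,\cdots,i_1)$ rather than over all such tuples, which matches the statement as written.
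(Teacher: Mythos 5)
Your proposal is correct and follows essentially the same route as the paper: both necessity and sufficiency there amount to exactly the rearrangement you describe, adding or subtracting the summand $\underline{a}_{i_1j_2\cdots j_m}$ to pass between condition $(b)$ of Theorem \ref{th1} and the consolidated inequality, with condition $(a)$ absorbed into the $\max$ with $0$. Your remark on the quantification of the $\max$ per fixed $(j_2,\cdots,j_m)$ is also consistent with the paper's reading.
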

\begin{proof}
``{\bf Necessity.}'' Since $\mathcal{A}^I$ is an interval B-tensor, then by Theorem \ref{th1}, for each $i_1\in[n]$,
$$\sum\limits_{i_2,\cdots,i_m=1}^n \underline{a}_{i_1i_2\cdots i_m}>0,$$
and for any $(j_2,j_3,\cdots, j_m)\neq(i_1,i_1,\cdots, i_1)$
$$\sum\limits_{(i_2,i_3,\cdots,i_m)\neq(j_2,j_3,\cdots, j_m)}\underline{a}_{i_1i_2\cdots i_m}>(n^{m-1}-1)\overline{a}_{i_1j_2\cdots j_m},$$
then
$$\sum\limits_{i_2,i_3,\cdots,i_m=1}^n\underline{a}_{i_1i_2\cdots i_m}>(n^{m-1}-1)\overline{a}_{i_1j_2\cdots j_m}+\underline{a}_{i_1j_2\cdots j_m}.$$

``{\bf Sufficiency.}''
The condition $(a)$ of Theorem \ref{th1} holds evidently. For each $i_1\in[n]$ and $(j_2,j_3,\cdots, j_m)\neq(i_1,i_1,\cdots, i_1)$,
$$\sum\limits_{i_2,i_3,\cdots,i_m=1}^n \underline{a}_{i_1i_2\cdots i_m}>\max\{0,(n^{m-1}-1)\overline{a}_{i_1j_2\cdots j_m}+\underline{a}_{i_1j_2\cdots j_m}\}\geq(n^{m-1}-1)\overline{a}_{i_1j_2\cdots j_m}+\underline{a}_{i_1j_2\cdots j_m},$$
then
$$\sum\limits_{(i_2,i_3,\cdots,i_m)\neq(j_2,j_3,\cdots, j_m)}\underline{a}_{i_1i_2\cdots i_m}>(n^{m-1}-1)\overline{a}_{i_1j_2\cdots j_m},$$
the condition $(b)$ of Theorem \ref{th1} also holds. Therefore $\mathcal{A}^I$ is an interval B-tensor.
\end{proof}

Corollary \ref{c3} consolidates the two conditions of Theorem \ref{th1} into a single compact inequality, making it more convenient for application. The following two corollaries present another equivalent form of interval B-tensors, revealing their intrinsic connection with diagonally dominant structures.

\begin{corollary}\label{c4}
Let $\mathcal{A}^I$ be an interval tensor in $T_{m,n}$. $\mathcal{A}^I$ is an interval B-tensor if and only if for all $i_1\in[n]$,
\begin{itemize}
  \item [(a)] $\sum\limits_{i_2,i_3,\cdots,i_m=1}^n\underline{a}_{i_1i_2\cdots i_m}>0$,
  \item [(b)] $\underline{a}_{i_1i_1\cdots i_1}-\underline{a}_{i_1j_2\cdots j_m}>\sum\limits_{(i_2,i_3,\cdots,i_m)\neq(i_1,i_1,\cdots,i_1)}(\overline{a}_{i_1j_2\cdots j_m}-\underline{a}_{i_1i_2\cdots i_m})$ for all $(j_2,j_3,\cdots,j_m)\neq(i_1,i_1,\cdots,i_1)$.
\end{itemize}
\end{corollary}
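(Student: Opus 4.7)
The plan is to derive Corollary \ref{c4} as a direct algebraic reformulation of Corollary \ref{c3}. The key observation is that condition (b) has precisely the same structure as the diagonally-dominant-style inequality in Proposition \ref{p3}, but with the role of $\gamma_{i_1}^+(\mathcal{A})$ played by the upper bound $\overline{a}_{i_1 j_2 \cdots j_m}$; so the sufficient and necessary conditions should be obtainable by simply rearranging the inequality back into a row-sum form and then appealing to Corollary \ref{c3}.

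First, I would note that in the sum on the right-hand side of (b), the term $\overline{a}_{i_1 j_2 \cdots j_m}$ is constant in the summation index $(i_2,\ldots,i_m)$. Since the sum runs over the $n^{m-1}-1$ non-diagonal index tuples, it splits as
\[
\sum_{(i_2,\ldots,i_m)\neq(i_1,\ldots,i_1)} (\overline{a}_{i_1 j_2 \cdots j_m}-\underline{a}_{i_1 i_2 \cdots i_m}) = (n^{m-1}-1)\,\overline{a}_{i_1 j_2 \cdots j_m} - \Bigl(\sum_{i_2,\ldots,i_m=1}^{n}\underline{a}_{i_1 i_2 \cdots i_m} - \underline{a}_{i_1 i_1 \cdots i_1}\Bigr).
\]
Substituting this expression back into condition (b) and cancelling the diagonal term $\underline{a}_{i_1 i_1 \cdots i_1}$ that appears on both sides, I would obtain the equivalent inequality
\[
\sum_{i_2,\ldots,i_m=1}^{n}\underline{a}_{i_1 i_2 \cdots i_m} > (n^{m-1}-1)\,\overline{a}_{i_1 j_2 \cdots j_m} + \underline{a}_{i_1 j_2 \cdots j_m},
\]
valid for every $(j_2,\ldots,j_m)\neq(i_1,\ldots,i_1)$.

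Finally, I would combine this with condition (a) to recover exactly the characterization
\[
\sum_{i_2,\ldots,i_m=1}^{n}\underline{a}_{i_1 i_2 \cdots i_m} > \max\{0,\, (n^{m-1}-1)\,\overline{a}_{i_1 j_2 \cdots j_m} + \underline{a}_{i_1 j_2 \cdots j_m}\}
\]
of Corollary \ref{c3}, which is necessary and sufficient for $\mathcal{A}^I$ to be an interval B-tensor. Since each step is reversible, both the necessity and sufficiency directions of Corollary \ref{c4} follow immediately. The only real care required is bookkeeping: one must track the diagonal entry $\underline{a}_{i_1 i_1 \cdots i_1}$ so that it is added in when converting from the sum over $(i_2,\ldots,i_m)\neq(i_1,\ldots,i_1)$ to the full row sum, and is not double-counted when it reappears on the left-hand side of (b). Beyond that routine manipulation, no further argument is needed.
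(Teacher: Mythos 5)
Your proposal is correct: the paper states Corollary \ref{c4} without proof as ``another equivalent form'' following Corollary \ref{c3}, and the intended argument is exactly the reversible algebraic rearrangement you give (splitting off the constant term $(n^{m-1}-1)\overline{a}_{i_1j_2\cdots j_m}$, restoring the full row sum, and cancelling the diagonal entry), mirroring how Proposition \ref{p3} is obtained from Proposition \ref{p2}. The bookkeeping checks out, so nothing further is needed.
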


\begin{corollary}\label{c5}
Let $\mathcal{A}^I$ be an interval tensor in $T_{m,n}$. $\mathcal{A}^I$ is an interval B-tensor if and only if for all $i_1\in[n]$,
\begin{itemize}
  \item [(a)] $\sum\limits_{i_2,i_3,\cdots,i_m=1}^n\underline{a}_{i_1i_2\cdots i_m}>0$,
  \item [(b)] $\underline{a}_{i_1i_1\cdots i_1}-\overline{a}_{i_1j_2\cdots j_m}>\sum\limits_{\substack{(i_2,i_3,\cdots,i_m)\neq(i_1,i_1,\cdots,i_1)\\
      (i_2,i_3,\cdots,i_m)\neq(j_2,j_3,\cdots,j_m)}}(\overline{a}_{i_1j_2\cdots j_m}-\underline{a}_{i_1i_2\cdots i_m})$ for all $(j_2,j_3,\cdots,j_m)\neq(i_1,i_1,\cdots,i_1)$.
\end{itemize}
\end{corollary}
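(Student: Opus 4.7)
The plan is to reduce Corollary \ref{c5} to Theorem \ref{th1} by pure algebraic manipulation, showing that condition (b) of the corollary is merely a re-expression of condition (b) of Theorem \ref{th1}, while condition (a) is literally identical. Since Theorem \ref{th1} has already been established as a necessary and sufficient criterion, the bi-implication in Corollary \ref{c5} will follow at once.

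Concretely, fix $i_1 \in [n]$ and an index tuple $(j_2,j_3,\ldots,j_m) \neq (i_1,i_1,\ldots,i_1)$. The key observation is a counting one: the sum $\sum_{(i_2,\ldots,i_m)\neq(j_2,\ldots,j_m)} 1$ contains exactly $n^{m-1}-1$ terms, which matches the coefficient $n^{m-1}-1$ appearing on the right-hand side of condition (b) of Theorem \ref{th1}. Hence I can rewrite
\[
\sum_{(i_2,\ldots,i_m)\neq(j_2,\ldots,j_m)} \underline{a}_{i_1 i_2 \cdots i_m} > (n^{m-1}-1)\,\overline{a}_{i_1 j_2 \cdots j_m}
\]
as
\[
\sum_{(i_2,\ldots,i_m)\neq(j_2,\ldots,j_m)} \bigl(\underline{a}_{i_1 i_2\cdots i_m} - \overline{a}_{i_1 j_2 \cdots j_m}\bigr) > 0,
\]
distributing one copy of $\overline{a}_{i_1 j_2 \cdots j_m}$ against each of the $n^{m-1}-1$ summands.

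Next, because $(j_2,\ldots,j_m)\neq(i_1,\ldots,i_1)$, the diagonal tuple $(i_2,\ldots,i_m)=(i_1,\ldots,i_1)$ does lie inside the index set of the above sum, so I can pull it out as a separate term. Doing so and moving the remaining (off-diagonal, non-$j$) terms to the right-hand side yields precisely
\[
\underline{a}_{i_1 i_1 \cdots i_1} - \overline{a}_{i_1 j_2 \cdots j_m} > \sum_{\substack{(i_2,\ldots,i_m)\neq(i_1,\ldots,i_1)\\(i_2,\ldots,i_m)\neq(j_2,\ldots,j_m)}} \bigl(\overline{a}_{i_1 j_2 \cdots j_m} - \underline{a}_{i_1 i_2\cdots i_m}\bigr),
\]
which is condition (b) of Corollary \ref{c5}. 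Each step is reversible, so the two forms of condition (b) are equivalent, and combined with the unchanged condition (a), Theorem \ref{th1} delivers the desired characterization.

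There is essentially no obstacle here beyond careful index bookkeeping: one must verify that the diagonal tuple truly belongs to the summation range (guaranteed by $(j_2,\ldots,j_m)\neq(i_1,\ldots,i_1)$) and that the count of excluded indices on the right is exactly $n^{m-1}-1$, matching the coefficient on $\overline{a}_{i_1 j_2\cdots j_m}$. No further ideas are needed, so the proof is short.
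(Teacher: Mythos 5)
Your proof is correct, and it is essentially the argument the paper intends: the paper states Corollary \ref{c5} without proof as an immediate algebraic rewriting of condition (b) of Theorem \ref{th1} (exactly parallel to how Proposition \ref{p3} is derived from Proposition \ref{p2}), which is precisely the index-counting and term-separation you carry out. The bookkeeping is right — the sum over $(i_2,\ldots,i_m)\neq(j_2,\ldots,j_m)$ has $n^{m-1}-1$ terms, and the diagonal tuple lies in that range because $(j_2,\ldots,j_m)\neq(i_1,\ldots,i_1)$ — and every step is reversible, so the equivalence holds.
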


There exists a significant connection between interval B-tensors and P-tensors, particularly under the conditions of even order and symmetry. This relationship is further elucidated in the corollaries that follow.
\begin{corollary}\label{c61}
Suppose $\mathcal{A}^I $ is an interval Z tensor in $T_{m,n}$ with $m$ is even. Then $\mathcal{A}^I$ is an interval P tensor if $\mathcal{A}^I$ is an interval B-tensor.
\end{corollary}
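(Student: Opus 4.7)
The plan is to reduce this corollary directly to Proposition \ref{p1}, using only the pointwise nature of the definitions of interval B-, Z-, and P-tensors. Since each of these interval classes is defined by requiring the corresponding property to hold for every tensor in the interval, the proof amounts to verifying the property pointwise and then invoking the scalar (non-interval) result.

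First I would fix an arbitrary $\mathcal{A}\in\mathcal{A}^I$. From the hypothesis that $\mathcal{A}^I$ is an interval B-tensor, $\mathcal{A}$ is a B-tensor; from the hypothesis that $\mathcal{A}^I$ is an interval Z-tensor, $\mathcal{A}$ is a Z-tensor. Because $m$ is even, Proposition \ref{p1} applies to $\mathcal{A}$ (in its Z-tensor branch) and yields that $\mathcal{A}$ is a P-tensor. Since $\mathcal{A}\in\mathcal{A}^I$ was arbitrary, every tensor in $\mathcal{A}^I$ is a P-tensor, which by definition means $\mathcal{A}^I$ is an interval P-tensor.

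There is essentially no analytic obstacle here: the argument is a one-line consequence of Proposition \ref{p1} combined with the pointwise definition of the interval classes. The only thing worth double-checking is the logical direction of the hypotheses, namely that both the Z-property and the B-property are assumed to hold for every member of $\mathcal{A}^I$ simultaneously, so that for a single fixed $\mathcal{A}\in\mathcal{A}^I$ both structural conditions are available at once and Proposition \ref{p1} can be applied to that same $\mathcal{A}$. No use of the extreme-point characterization from Theorem \ref{th1} is needed for this corollary; it would only be relevant if one wanted to replace the hypothesis ``interval B-tensor'' by a verifiable criterion on $\underline{\mathcal{A}}$ and $\overline{\mathcal{A}}$, which is not required by the statement.
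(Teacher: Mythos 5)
Your proof is correct and is exactly the argument the paper intends (the paper omits the proof of this corollary as immediate): fix $\mathcal{A}\in\mathcal{A}^I$, note it is simultaneously a B-tensor and a Z-tensor by the pointwise definitions of the interval classes, apply Proposition \ref{p1} with $m$ even, and conclude pointwise. Nothing further is needed.
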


\begin{corollary}\label{c62}
If $\mathcal{A}^I$ is a symmetric interval B-tensor in $T_{m,n}$ with $m$ is even, then $\mathcal{A}^I$ is an interval P tensor.
\end{corollary}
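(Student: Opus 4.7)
The plan is to reduce the statement to a pointwise application of Proposition \ref{p1}, exactly as Corollary \ref{c61} reduces to the Z-tensor branch of that same proposition. I would pick an arbitrary $\mathcal{A}\in\mathcal{A}^I$ and verify the two hypotheses of Proposition \ref{p1}: that $\mathcal{A}$ is a B-tensor with $m$ even, and that $\mathcal{A}$ is symmetric.

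The first transfer is immediate from the assumption that $\mathcal{A}^I$ is an interval B-tensor: by the very definition of interval B-tensor, every member of $\mathcal{A}^I$ is a B-tensor. The second uses the hypothesis that $\mathcal{A}^I$ is symmetric; in the paper's setup this means both $\mathcal{A}^c$ and $\Delta$ are symmetric, and one then works with the symmetric representatives $\mathcal{A}\in\mathcal{A}^I\cap S_{m,n}$. For such an $\mathcal{A}$, the symmetric-B-tensor branch of Proposition \ref{p1} yields that $\mathcal{A}$ is a P-tensor, and since this holds for every admissible $\mathcal{A}$, the whole interval is an interval P-tensor.

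The only point that requires care --- and it is conceptual rather than computational --- is the interpretation of \emph{symmetric interval tensor}: the componentwise box $[\underline{\mathcal{A}},\overline{\mathcal{A}}]$ can in principle contain non-symmetric tensors even when its centre $\mathcal{A}^c$ and radius $\Delta$ are symmetric, so one must implicitly restrict to the symmetric slice $\mathcal{A}^I\cap S_{m,n}$ in order for Proposition \ref{p1} to apply to every element under consideration. Once this reading is accepted --- it is the standard convention in the symmetric interval matrix literature and is the one compatible with Proposition \ref{p9} --- the corollary becomes a one-line consequence of Proposition \ref{p1}, with no extreme-point analysis, no diagonal-dominance estimate, and no recourse to Theorem \ref{th1} required.
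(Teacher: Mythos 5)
There is a genuine gap, and it sits exactly at the point you flag and then wave away. The paper's definition of an interval P-tensor is that \emph{every} tensor in the componentwise box $\mathcal{A}^I=[\underline{\mathcal{A}},\overline{\mathcal{A}}]$ is a P-tensor; the paper never restricts a symmetric interval tensor to its symmetric slice $\mathcal{A}^I\cap S_{m,n}$ (the hypothesis ``$\mathcal{A}^I$ is symmetric'' only says that $\mathcal{A}^c$ and $\Delta$ are symmetric, while $\mathcal{A}^I$ itself remains the full box). A non-symmetric $\mathcal{A}\in\mathcal{A}^I$ is still a B-tensor by hypothesis, but Proposition \ref{p1} does not apply to it unless it is a Z-tensor or symmetric, and an even-order B-tensor with neither property need not be a P-tensor --- that is precisely why Proposition \ref{p1} carries those extra hypotheses. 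So your argument proves the P-property only for the symmetric members of the interval, which is strictly weaker than the stated corollary.

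The paper's proof is built exactly to close this gap. It first notes, as you do, that the symmetric tensors $\mathcal{A}^z=\mathcal{A}^c-\Delta\times_1 T_z\times_2\cdots\times_m T_z$ (for sign vectors $z$) lie in $\mathcal{A}^I$, are symmetric B-tensors, and hence are P-tensors by Proposition \ref{p1}. The essential additional step is the comparison: for an arbitrary, possibly non-symmetric $\mathcal{A}\in\mathcal{A}^I$ and any $x\neq 0$, taking $z=\mathrm{sgn}(x)$ one has
$x_i(\mathcal{A}x^{m-1})_i\geq x_i(\mathcal{A}^c x^{m-1})_i-|x_i|(\Delta|x^{m-1}|)_i=x_i(\mathcal{A}^z x^{m-1})_i$
for every $i$, so the index witnessing the P-property of $\mathcal{A}^z$ at $x$ also witnesses it for $\mathcal{A}$. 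Without this sign-vector estimate (or some substitute covering the non-symmetric members), your proof does not establish the corollary as stated; with it, your reduction to Proposition \ref{p1} becomes the paper's own argument.
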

\begin{proof}
Denote
$$\mathcal{A}^z:=\mathcal{A}^c-\Delta\times_1 T_z\times_2 T_z\times_3\cdots\times_m T_z,$$
where $T_z$ is a $n\times n$ diagonal matrix whose diagonal entries from the vector $z\in Y= \{z\in \mathbb{R}^n; |z_j|=1 \mbox{ for } j=1,\cdots,n\}$.

Let $\mathcal{A}^I$ is a symmetric interval B-tensor, then $\mathcal{A}^z\in\mathcal{A}^I$ is a symmetric B-tensor for all $z\in Y$, hence $\mathcal{A}^z\in\mathcal{A}^I$ is a P tensor for all $z\in Y$ by Proposition \ref{p1}.

For each $\mathcal{A}\in \mathcal{A}^I$, $x=(x_1,x_2,\cdots,x_n)^{\top}\in\mathbb{R}^n$ and $i\in[n]$,
$$|x_i(\mathcal{A}x^{m-1})_i-x_i(\mathcal{A}^cx^{m-1})_i|=|x_i(\mathcal{A}-\mathcal{A}^c)x^{m-1}_i|\leq|x_i|(\Delta |x^{m-1}|)_i.$$
Let $z=\mbox{sgn}(x)=(\mbox{sgn}(x_1),\mbox{sgn}(x_2),\cdots,\mbox{sgn}(x_n))^{\top}$, where
$\mbox{sgn}(x_i)=\begin{cases}
\  \ 1\mbox{ if }x_i\geq0,\\
-1\mbox{ if }x_i<0.
\end{cases}$
Then
\begin{align*}
x_i(\mathcal{A}x^{m-1})_i &\geq x_i(\mathcal{A}^cx^{m-1})_i-|x_i|(\Delta |x^{m-1}|)_i\\
&=\sum\limits_{i_2,\cdots,i_m=1}^na^c_{ii_2\cdots i_m}x_ix_{i_2}\cdots x_{i_m}-\sum\limits_{i_2,\cdots,i_m=1}^n\delta_{ii_2\cdots i_m}|x_ix_{i_2}\cdots x_{i_m}|\\
&=\sum\limits_{i_2,\cdots,i_m=1}^n[a^c_{ii_2\cdots i_m}-\delta_{ii_2\cdots i_m}\mbox{sgn}(x_ix_{i_2}\cdots x_{i_m})]x_ix_{i_2}\cdots x_{i_m}\\
&=x_i(\mathcal{A}^z x^{m-1})_i>0.
\end{align*}
Therefore $\mathcal{A}$ is a P tensor, furthermore, $\mathcal{A}$ is a interval P tensor.
\end{proof}

The proof of this corollary leverages the structure of symmetric interval tensors, connecting any tensor within the interval to a symmetric B-tensor (and hence a P-tensor) through a sign transformation. The following corollary indicates that interval B-tensors possess a certain closure property under row-wise structural mixing.
\begin{corollary}\label{c7}
Suppose $\mathcal{A}^I,\mathcal{B}^I,\mathcal{C}^I$ are interval tensors in $T_{m,n}$, where $\mathcal{A}^I$ and $\mathcal{B}^I$ are interval B-tensors. If for each $i_1\in[n]$, either
$\underline{c}_{i_1i_1\cdots i_1}=\underline{a}_{i_1i_1\cdots i_1}$, $\overline{c}_{i_1i_1\cdots i_1}=\overline{a}_{i_1i_1\cdots i_1}$, the off-diagonal entries of the $i_1$-th row in $\underline{\mathcal{A}}$ and $\underline{\mathcal{C}}$ forms bijection and the off-diagonal entries of the $i_1$-th row in $\overline{\mathcal{A}}$ and $\overline{\mathcal{C}}$ forms bijection,
or $\underline{c}_{i_1i_1\cdots i_1}=\underline{b}_{i_1i_1\cdots i_1}$, $\overline{c}_{i_1i_1\cdots i_1}=\overline{b}_{i_1i_1\cdots i_1}$, the off-diagonal entries of the $i_1$-th row in $\underline{\mathcal{B}}$ and $\underline{\mathcal{C}}$ forms bijection and the off-diagonal entries of the $i_1$-th row in $\overline{\mathcal{B}}$ and $\overline{\mathcal{C}}$ forms bijection. then $\mathcal{C}^I$ is also an interval B-tensor.
\end{corollary}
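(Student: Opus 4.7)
The plan is to verify the two conditions of Theorem \ref{th1} for $\mathcal{C}^I$, exploiting the fact that both are entirely row-wise: condition $(a)$ uses only the $i_1$-th row sum of the lower-bound tensor, and condition $(b)$ involves only entries in the $i_1$-th rows of the lower and upper bound tensors. Thus it suffices to show that, for each fixed $i_1\in[n]$, the $i_1$-th row of $\mathcal{C}^I$ satisfies the inequalities already guaranteed by whichever of $\mathcal{A}^I,\mathcal{B}^I$ supplies that row, transporting them across the bijection on off-diagonal entries. This is the interval analogue of the reasoning used in Proposition \ref{p5}.

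Fix $i_1\in[n]$ and, by symmetry, assume the $i_1$-th row of $\mathcal{C}^I$ is inherited from $\mathcal{A}^I$: the diagonal entries match, and a single permutation $\sigma$ of the off-diagonal index tuples realizes the bijections on both the lower and upper bounds, so that $\underline{c}_{i_1 i_2\cdots i_m}=\underline{a}_{i_1\sigma(i_2,\ldots,i_m)}$ and $\overline{c}_{i_1 i_2\cdots i_m}=\overline{a}_{i_1\sigma(i_2,\ldots,i_m)}$ for all off-diagonal tuples. Using a common $\sigma$ for both bounds is the natural reading of the two simultaneous bijection hypotheses and is what keeps $\underline{\mathcal{C}}\le\overline{\mathcal{C}}$ componentwise. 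With this in hand,
\[
\sum_{i_2,\ldots,i_m=1}^n\underline{c}_{i_1 i_2\cdots i_m}=\sum_{i_2,\ldots,i_m=1}^n\underline{a}_{i_1 i_2\cdots i_m}>0,
\]
so condition $(a)$ of Theorem \ref{th1} for $\mathcal{C}^I$ follows at once from the same condition for $\mathcal{A}^I$.

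For condition $(b)$, fix $(j_2,\ldots,j_m)\neq(i_1,\ldots,i_1)$ and let $(k_2,\ldots,k_m)=\sigma(j_2,\ldots,j_m)$. Rewriting each constrained sum as the full row sum minus the excluded entry gives
\[
\sum_{(i_2,\ldots,i_m)\neq(j_2,\ldots,j_m)}\underline{c}_{i_1 i_2\cdots i_m}=\sum_{(i_2,\ldots,i_m)\neq(k_2,\ldots,k_m)}\underline{a}_{i_1 i_2\cdots i_m},
\]
while at the same time $\overline{c}_{i_1 j_2\cdots j_m}=\overline{a}_{i_1 k_2\cdots k_m}$. Hence condition $(b)$ for $\mathcal{C}^I$ at the index $(i_1,j_2,\ldots,j_m)$ reduces verbatim to condition $(b)$ for $\mathcal{A}^I$ at the index $(i_1,k_2,\ldots,k_m)$, which holds by the interval B-tensor hypothesis on $\mathcal{A}^I$. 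An entirely analogous argument applies to those rows $i_1$ whose data are drawn from $\mathcal{B}^I$ instead.

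The only place where I expect care to be needed is the interpretation of the bijection hypothesis: to make the identifications in the displayed equations above, one must use the same permutation on the lower and upper bounds so that positions track together across $\underline{\mathcal{A}}$--$\underline{\mathcal{C}}$ and $\overline{\mathcal{A}}$--$\overline{\mathcal{C}}$. Once this is granted, the proof is a direct transport of the row-wise reasoning of Proposition \ref{p5} to the interval setting through the extreme-point criterion in Theorem \ref{th1}.
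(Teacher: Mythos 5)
Your proof is correct and follows exactly the route the paper intends: the paper omits an explicit proof of this corollary, relying (as in Proposition \ref{p5}) on the fact that the criterion of Theorem \ref{th1} is purely row-wise, and your argument simply spells out that transport of conditions $(a)$ and $(b)$ across the index permutation. Your remark that the two bijection hypotheses must be realized by a common permutation of index tuples (so that lower and upper bounds track together and $\underline{\mathcal{C}}\le\overline{\mathcal{C}}$ is preserved) is the right reading of the statement and is the only point requiring care.
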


\begin{remark}\label{r2}
Since for an interval B-tensor $\mathcal{A}^I$ in $T_{m,n}$, every $\mathcal{A}\in\mathcal{A}^I$ is a B-tensor. Thus define $\mathcal{A}'=(a'_{i_1i_2\cdots i_m})$ as
$$a'_{i_1i_2\cdots i_m}=\begin{cases}
\underline{a}_{i_1i_2\cdots i_m},\mbox{ if }i_1=i_2=\cdots=i_m,\\
\overline{a}_{i_1i_2\cdots i_m},\mbox{ otherwise},
\end{cases}$$
is a B-tensor, then for all $i_1\in[n]$
$$\underline{a}_{i_1i_1\cdots i_1}>\max\{0,\overline{a}_{i_1i_2\cdots i_m}:(i_2,i_3,\cdots, i_m)\neq(i_1,i_1,\cdots,i),i_2,i_3,\cdots,i_m\in[n]\}.$$
\end{remark}

For the special and important subclass of interval Z tensors, the determination of the property of being an interval B-tensor can be greatly simplified, as shown in the following proposition.

\begin{proposition}\label{p6}
If $\mathcal{A}^I$ is an interval Z tensor in $T_{m,n}$, then the following are equivalent.
\begin{itemize}
  \item [(a)] $\mathcal{A}^I$ is an interval B-tensor.
  \item [(b)] $\sum\limits_{i_2,i_3,\cdots, i_m=1}^n \underline{a}_{i_1i_2\cdots i_m}>0$ for each $i_1\in[n]$.
  \item [(c)] $\underline{\mathcal{A}}$ is strictly diagonally dominant.
  \item [(d)] $\underline{\mathcal{A}}$ is a B-tensor.
\end{itemize}
\end{proposition}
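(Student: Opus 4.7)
The plan is to establish the equivalence by a short cycle, letting Proposition~\ref{p4} do the heavy lifting for the single-tensor part and Theorem~\ref{th1} do the interval-to-extreme-points part. Observe first that both $\underline{\mathcal{A}}$ and $\overline{\mathcal{A}}$ lie in $\mathcal{A}^I$, so each is itself a Z-tensor; Proposition~\ref{p4} applied to the single tensor $\underline{\mathcal{A}}$ then delivers the three equivalences $(b)\Leftrightarrow(c)\Leftrightarrow(d)$ of the present proposition for free. The direction $(a)\Rightarrow(b)$ is equally immediate: since $\underline{\mathcal{A}}\in\mathcal{A}^I$ is a B-tensor, part~(a) of Definition~\ref{def-bt} supplies the positive row sums for every $i_1\in[n]$.

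The substantive step is $(d)\Rightarrow(a)$, which I would dispatch by verifying the two conditions of Theorem~\ref{th1}. Condition~(a) there is simply the positive row-sum statement for $\underline{\mathcal{A}}$, which is already implied by $\underline{\mathcal{A}}$ being a B-tensor. For condition~(b), fix $i_1\in[n]$ and an off-diagonal tuple $(j_2,\ldots,j_m)\neq(i_1,\ldots,i_1)$. Because $\overline{\mathcal{A}}$ is a Z-tensor, the off-diagonal entry $\overline{a}_{i_1 j_2\cdots j_m}$ has the Z sign, so $(n^{m-1}-1)\overline{a}_{i_1 j_2\cdots j_m}\leq 0$; combining the Z sign of $\underline{a}_{i_1 j_2\cdots j_m}$ with the strictly positive row sum of $\underline{\mathcal{A}}$,
$$\sum_{(i_2,\ldots,i_m)\neq(j_2,\ldots,j_m)}\underline{a}_{i_1 i_2\cdots i_m} \;=\; \sum_{i_2,\ldots,i_m=1}^n\underline{a}_{i_1 i_2\cdots i_m} \;-\; \underline{a}_{i_1 j_2\cdots j_m} \;>\; -\underline{a}_{i_1 j_2\cdots j_m} \;\geq\; 0,$$
so the left side is strictly positive while the right side is nonpositive, which is exactly the strict inequality required by Theorem~\ref{th1}(b).

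The only mild subtlety is that Theorem~\ref{th1}(b) demands a strict inequality, and I want this strictness to survive even in the degenerate situation where both $\underline{a}_{i_1 j_2\cdots j_m}$ and $\overline{a}_{i_1 j_2\cdots j_m}$ vanish. The displayed chain handles this uniformly because the strictness is drawn from the positive row sum of $\underline{\mathcal{A}}$ rather than from the Z-tensor signs, so no separate case analysis on the vanishing entries is needed, and the proof closes cleanly.
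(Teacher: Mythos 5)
Your proof is correct and shares the paper's overall skeleton: both obtain $(b)\Leftrightarrow(c)\Leftrightarrow(d)$ from Proposition~\ref{p4} applied to the Z-tensor $\underline{\mathcal{A}}$, and both get $(a)\Rightarrow(b)$ essentially for free from $\underline{\mathcal{A}}\in\mathcal{A}^I$. The difference is in how you close the cycle. The paper proves $(c)\Rightarrow(a)$ by a pointwise argument over the whole interval: for every $\mathcal{A}\in\mathcal{A}^I$ the diagonal entries only grow and the off-diagonal entries only shrink in absolute value relative to $\underline{\mathcal{A}}$ (using $\underline{a}\le a\le\overline{a}\le 0$ off the diagonal), so each $\mathcal{A}$ inherits strict diagonal dominance and is a B-tensor again by Proposition~\ref{p4}. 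You instead prove $(d)\Rightarrow(a)$ by checking the two extreme-point conditions of Theorem~\ref{th1}: condition (a) is just the positive row sum, and condition (b) holds because your left side is at least the full row sum (since $\underline{a}_{i_1j_2\cdots j_m}\le 0$), hence strictly positive, while the right side $(n^{m-1}-1)\overline{a}_{i_1j_2\cdots j_m}$ is nonpositive. Both arguments are sound; the paper's yields the slightly stronger intermediate fact that every member of the interval is strictly diagonally dominant, while yours stays entirely at the level of the endpoint tensors and leans on the paper's own extreme-point characterization. One caveat that affects your proof and the paper's equally: Definition~\ref{def-zt} as printed says off-diagonal entries are $\ge 0$, but both proofs (and Proposition~\ref{p4} itself) require the standard convention that a Z-tensor has nonpositive off-diagonal entries; your appeal to ``the Z sign'' correctly reads the definition as the evidently intended $\le 0$.
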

\begin{proof}
``$(a)\Rightarrow(b)$'' can be deduced by Theorem \ref{th1}, and the equivalence of $(b)$, $(c)$ and $(d)$ is the result of Lemma \ref{p4}. We show ``$(c)\Rightarrow(a)$'' in the following.

Any $\mathcal{A}\in\mathcal{A}^I$, for all $i_1\in[n]$, it satisfies that
$$a_{i_1i_1\cdots i_1}\geq \underline{a}_{i_1i_1\cdots i_1}\mbox{ and } |a_{i_1i_2\cdots i_m}|\leq|\underline{a}_{i_1i_2\cdots i_m}|\mbox{ for all }(i_2,i_3,\cdots,i_m)\neq(i_1,i_1,\cdots,i_1).$$
Then
$$a_{i_1i_1\cdots i_1}\geq \underline{a}_{i_1i_1\cdots i_1}>\sum\limits_{(i_2,i_3,\cdots, i_m)\neq(i_1,i_1,\cdots,i_1)} |\underline{a}_{i_1i_2\cdots i_m}|\geq\sum\limits_{(i_2,i_3,\cdots, i_m)\neq(i_1,i_1,\cdots,i_1)} |a_{i_1i_2\cdots i_m}|\geq0.$$
Hence $\mathcal{A}$ is a strictly diagonally dominant tensor. Then according Lemma \ref{p4}, $\mathcal{A}$ is a B-tensor, therefore $\mathcal{A}^I$ is an interval B-tensor.
\end{proof}

\begin{proposition}\label{p7}
If $\mathcal{A}^I$ is an interval B-tensor in $T_{m,n}$. Then for all $i_1\in[n]$ the following are satisfied.
\begin{itemize}
  \item [(a)] $\underline{a}_{i_1i_1\cdots i_1}>\sum\limits_{(i_2,i_3,\cdots, i_m)\in J} |\underline{a}_{i_1i_2\cdots i_m}|$, where $J=\{(i_2,i_3,\cdots, i_m):i_2,i_3,\cdots, i_m\in[n],\underline{a}_{i_1i_2\cdots i_m}<0\}$.
  \item [(b)] $\underline{a}_{i_1i_1\cdots i_1}>\max\{|\overline{a}_{i_1i_2\cdots i_m}|,|\underline{a}_{i_1i_2\cdots i_m}|\}$ for all $(i_2,i_3,\cdots, i_m)\neq (i_1,i_1,\cdots,i_1)$.
\end{itemize}
\end{proposition}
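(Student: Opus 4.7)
The plan is to exploit two facts already in hand: since $\underline{\mathcal{A}}\in\mathcal{A}^I$, the lower-bound tensor $\underline{\mathcal{A}}$ is itself a B-tensor, so all structural inequalities from Section~2 apply to it; and Remark~\ref{r2} already gives the crude comparison $\underline{a}_{i_1i_1\cdots i_1}>\max\{0,\overline{a}_{i_1i_2\cdots i_m}\}$ for every off-diagonal position. Part (a) will come from Proposition~\ref{p3} applied to $\underline{\mathcal{A}}$, and part (b) will be assembled from (a) and Remark~\ref{r2} by a short sign case-split.

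For part (a), I would fix $i_1\in[n]$ and invoke Proposition~\ref{p3} for $\underline{\mathcal{A}}$ to write
\[
\underline{a}_{i_1i_1\cdots i_1}-\gamma_{i_1}^+(\underline{\mathcal{A}})>\sum_{(i_2,\ldots,i_m)\neq(i_1,\ldots,i_1)}\bigl(\gamma_{i_1}^+(\underline{\mathcal{A}})-\underline{a}_{i_1i_2\cdots i_m}\bigr).
\]
Because $\gamma_{i_1}^+(\underline{\mathcal{A}})\ge 0$ by definition, each term on the right is nonnegative, and whenever $(i_2,\ldots,i_m)\in J$ (i.e.\ $\underline{a}_{i_1i_2\cdots i_m}<0$) the bound $\gamma_{i_1}^+(\underline{\mathcal{A}})-\underline{a}_{i_1i_2\cdots i_m}\ge -\underline{a}_{i_1i_2\cdots i_m}=|\underline{a}_{i_1i_2\cdots i_m}|$ holds. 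Discarding the nonnegative $\gamma_{i_1}^+(\underline{\mathcal{A}})$ on the left and restricting the sum on the right to $J$ yields exactly the inequality in (a).

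For part (b), I would split on the sign of $\underline{a}_{i_1i_2\cdots i_m}$ for a fixed off-diagonal index. If $\underline{a}_{i_1i_2\cdots i_m}\ge 0$, then $\overline{a}_{i_1i_2\cdots i_m}\ge 0$ too, so $|\underline{a}|=\underline{a}\le\overline{a}=|\overline{a}|$, and Remark~\ref{r2} gives $\underline{a}_{i_1i_1\cdots i_1}>\overline{a}_{i_1i_2\cdots i_m}=\max\{|\overline{a}_{i_1i_2\cdots i_m}|,|\underline{a}_{i_1i_2\cdots i_m}|\}$ at once. If $\underline{a}_{i_1i_2\cdots i_m}<0$, then $(i_2,\ldots,i_m)\in J$, and part (a) already implies $\underline{a}_{i_1i_1\cdots i_1}>\sum_{J}|\underline{a}_{i_1i_2\cdots i_m}|\ge |\underline{a}_{i_1i_2\cdots i_m}|$ by singling out one term. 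To handle $|\overline{a}_{i_1i_2\cdots i_m}|$ in this case, I distinguish: if $\overline{a}_{i_1i_2\cdots i_m}\ge 0$ apply Remark~\ref{r2}; if $\overline{a}_{i_1i_2\cdots i_m}<0$ then $|\overline{a}_{i_1i_2\cdots i_m}|=-\overline{a}_{i_1i_2\cdots i_m}\le -\underline{a}_{i_1i_2\cdots i_m}=|\underline{a}_{i_1i_2\cdots i_m}|$, reducing to the bound just obtained.

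I do not anticipate a genuine obstacle: both parts are reductions to results already established in Section~2 once one observes that the extreme tensor $\underline{\mathcal{A}}$ is a B-tensor. The only mildly delicate point is keeping the strict inequality in (a), which is guaranteed because the inequality supplied by Proposition~\ref{p3} is strict and we only weaken the right-hand side by dropping nonnegative terms; this strictness then propagates into the singleton bound used in (b).
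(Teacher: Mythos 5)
Your proof is correct; both parts go through with the strict inequalities intact. The route differs from the paper's in one worthwhile way: for part (a) you apply Proposition \ref{p3} directly to the single extreme tensor $\underline{\mathcal{A}}$ (which is a B-tensor because it lies in $\mathcal{A}^I$), observe that every term $\gamma_{i_1}^+(\underline{\mathcal{A}})-\underline{a}_{i_1i_2\cdots i_m}$ is nonnegative, and then restrict the sum to $J$. The paper instead invokes the interval-level Corollary \ref{c4}, which forces a case split on whether some off-diagonal lower bound is positive and drags the upper endpoint $\overline{a}_{i_1j_2\cdots j_m}$ into the estimate. Your version is cleaner and also slightly more informative: it shows that inequality (a) is really a property of any B-tensor applied to $\underline{\mathcal{A}}$, independent of the interval structure, whereas the interval hypothesis is only genuinely needed for (b) via Remark \ref{r2}. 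For part (b) the two arguments are essentially the same ingredients — part (a) plus Remark \ref{r2} — organized by a different case split (you split on the sign of $\underline{a}_{i_1i_2\cdots i_m}$, the paper on which of $|\underline{a}_{i_1i_2\cdots i_m}|$, $|\overline{a}_{i_1i_2\cdots i_m}|$ is larger); both are complete.
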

\begin{proof}
$(a)$ For all $i\in[n]$, we can divide it into the following two cases.

$(1)$ $\underline{a}_{i_1i_2\cdots i_m}\leq0$ for all $(i_2,i_3,\cdots, i_m)\neq (i_1,i_1,\cdots,i_1)$. By $(a)$ of Theorem \ref{th1}, the conclusion can be directly derived.

$(2)$ $\underline{a}_{i_1i_2\cdots i_m}>0$ for some $(i_2,i_3,\cdots, i_m)\neq (i_1,i_1,\cdots,i_1)$. Denote $\underline{a}_{i_1j_2\cdots j_m}\in\max\{\underline{a}_{i_1i_2\cdots i_m}:(i_2,i_3,\cdots, i_m)\neq (i_1,i_1,\cdots,i_1)\}$. By Corollary \ref{c4},
$$\underline{a}_{i_1i_1\cdots i_1}-\underline{a}_{i_1j_2\cdots j_m}>\sum\limits_{(i_2,i_3,\cdots, i_m)\neq (i_1,i_1,\cdots,i_1)}(\overline{a}_{i_1j_2\cdots j_m}-\underline{a}_{i_1i_2\cdots i_m}).$$
Since
$$\underline{a}_{i_1i_1\cdots i_1}>\underline{a}_{i_1i_1\cdots i_1}-\underline{a}_{i_1j_2\cdots j_m}$$
and
$$\overline{a}_{i_1j_2\cdots j_m}-\underline{a}_{i_1i_2\cdots i_m}\geq0 \mbox{ for all }(i_2,i_3,\cdots, i_m)\neq (i_1,i_1,\cdots,i_1),$$
then
\begin{align*}\underline{a}_{i_1i_1\cdots i_1}&>\underline{a}_{i_1i_1\cdots i_1}-\underline{a}_{i_1j_2\cdots j_m}\\
&>\sum\limits_{(i_2,i_3,\cdots, i_m)\neq (i_1,i_1,\cdots,i_1)}(\overline{a}_{i_1j_2\cdots j_m}-\underline{a}_{i_1i_2\cdots i_m})\\
&\geq\sum\limits_{(i_2,i_3,\cdots, i_m)\in J}(\overline{a}_{i_1j_2\cdots j_m}-\underline{a}_{i_1i_2\cdots i_m})\\
&>\sum\limits_{(i_2,i_3,\cdots, i_m)\in J}(-\underline{a}_{i_1i_2\cdots i_m})\\
&=\sum\limits_{(i_2,i_3,\cdots, i_m)\in J}|\underline{a}_{i_1i_2\cdots i_m}|.
\end{align*}

$(b)$ For all $(i_2,i_3,\cdots, i_m)\neq (i_1,i_1,\cdots,i_1)$, we can divide it into the following there cases.

$(1)$ When $|\underline{a}_{i_1i_2\cdots i_m}|>|\overline{a}_{i_1i_2\cdots i_m}|$, then $\underline{a}_{i_1i_2\cdots i_m}\leq 0$, this implies $(i_2,i_3,\cdots, i_m)\in J$. According $(a)$,
$$\underline{a}_{i_1i_1\cdots i_1}>\sum\limits_{(j_2,j_3,\cdots, j_m)\in J} |\underline{a}_{i_1j_2\cdots j_m}|\geq |\underline{a}_{i_1i_2\cdots i_m}|.$$

$(2)$ When $|\overline{a}_{i_1i_2\cdots i_m}|>|\underline{a}_{i_1i_2\cdots i_m}|$, then $\overline{a}_{i_1i_2\cdots i_m}>0$.  According Remark \ref{r2},
$$\underline{a}_{i_1i_1\cdots i_1}>\overline{a}_{i_1i_2\cdots i_m}=|\overline{a}_{i_1i_2\cdots i_m}|.$$

$(3)$ When $|\overline{a}_{i_1i_2\cdots i_m}|=|\underline{a}_{i_1i_2\cdots i_m}|$, then either $\overline{a}_{i_1i_2\cdots i_m}=-\underline{a}_{i_1i_2\cdots i_m}$, or $\overline{a}_{i_1i_2\cdots i_m}=\underline{a}_{i_1i_2\cdots i_m}$, this implies $\underline{a}_{i_1i_2\cdots i_m}<0$ or $\overline{a}_{i_1i_2\cdots i_m}>0$. Hence, we can draw the conclusion based on the discussions of the two previous cases.
\end{proof}

\begin{example}
Consider an interval tensor $\mathcal{A}^I = [\underline{\mathcal{A}}, \overline{\mathcal{A}}] \in T_{3,2}$ where
$\underline{a}_{111}=\underline{a}_{222}=4$, $\underline{a}_{112}=\underline{a}_{121}=\underline{a}_{211}=0$, $\underline{a}_{122}=\underline{a}_{212}= \underline{a}_{221}=1$, $\overline{a}_{111}=\overline{a}_{222}=5$, $\overline{a}_{112}=\overline{a}_{121}=\overline{a}_{211}=1$, $\overline{a}_{122}=\overline{a}_{212}=\overline{a}_{221}=2$.
\end{example}
We check the conditions of Theorem \ref{th1} for $i_1 = 1$.
$\sum\limits_{i_2,i_3=1}^2 \underline{a}_{1i_2i_3} = 4+0+0+1 = 5 > 0$, then ($a$) in Theorem \ref{th1} is satisfied for $i_1=1$.
But when $(j_2,j_3) = (2,2) \neq (1,1)$, there are
$$\sum\limits_{(i_2,i_3)\neq(2,2)} \underline{a}_{1i_2i_3} = 4+0+0 = 4,$$
and
$$(2^{2}-1)\overline{a}_{122} = 3 \times 2 = 6,$$
($b$) in Theorem \ref{th1} is not satisfied. Therefore, $\mathcal{A}^I$ is not an interval B-tensor.

However, if we tighten the upper bound $\overline{a}_{122} \leq 1$, then $4 > 3\times1 = 3$ holds, and similarly check other indices.

\vspace{.3cm}

\section{Interval Double B-tensors}
\setcounter{section}{4}
This section extends the study to interval double B-tensors. We first note that, under the conditions of even order and symmetry, interval double B-tensors also ensure the property of being an interval P-tensor.
\begin{corollary}\label{c8}
If $\mathcal{A}^I$ is a symmetric interval double B-tensor in $T_{m,n}$ with $m$ is even, then $\mathcal{A}^I$ is an interval P tensor.
\end{corollary}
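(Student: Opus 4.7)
The plan is to mirror the argument of Corollary \ref{c62} almost verbatim, with Proposition \ref{p9} playing the role that Proposition \ref{p1} played there. The only thing that was really special about the B-tensor case was the step ``symmetric even-order B-tensor $\Rightarrow$ P tensor,'' which is now replaced by ``symmetric even-order double B-tensor $\Rightarrow$ P tensor,'' and Proposition \ref{p9} gives exactly that.

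Concretely, for each sign vector $z\in Y=\{z\in\mathbb{R}^n:|z_j|=1\text{ for all }j\in[n]\}$, I would introduce the auxiliary tensor
$$\mathcal{A}^z:=\mathcal{A}^c-\Delta\times_1 T_z\times_2 T_z\times_3\cdots\times_m T_z,$$
where $T_z$ is the diagonal matrix with diagonal entries from $z$. A direct computation of $(\mathcal{A}^z)_{i_1\cdots i_m}=a^c_{i_1\cdots i_m}-\delta_{i_1\cdots i_m}z_{i_1}\cdots z_{i_m}$ shows that each entry of $\mathcal{A}^z$ deviates from the corresponding entry of $\mathcal{A}^c$ by at most $\delta_{i_1\cdots i_m}$, so $\mathcal{A}^z\in\mathcal{A}^I$. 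Symmetry of $\mathcal{A}^I$ means $\mathcal{A}^c$ and $\Delta$ are symmetric, and since $z_{i_1}\cdots z_{i_m}$ is permutation-invariant in the indices, $\mathcal{A}^z$ is symmetric as well. Because $\mathcal{A}^I$ is an interval double B-tensor, $\mathcal{A}^z$ is a symmetric double B-tensor, and $m$ being even lets me apply Proposition \ref{p9} to conclude that $\mathcal{A}^z$ is a P tensor for every $z\in Y$.

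The final step is to transfer this to an arbitrary $\mathcal{A}\in\mathcal{A}^I$. For any $x=(x_1,\ldots,x_n)^\top\in\mathbb{R}^n\setminus\{\mathbf{0}\}$ and any $i\in[n]$, the componentwise bound $|a_{ii_2\cdots i_m}-a^c_{ii_2\cdots i_m}|\le\delta_{ii_2\cdots i_m}$ gives
$$|x_i(\mathcal{A}x^{m-1})_i-x_i(\mathcal{A}^cx^{m-1})_i|\le|x_i|(\Delta|x^{m-1}|)_i.$$
Choosing $z:=\mathrm{sgn}(x)$, so that $z_{i_1}\cdots z_{i_m}x_{i_1}\cdots x_{i_m}=|x_{i_1}\cdots x_{i_m}|$, the same telescoping identity used in Corollary \ref{c62} produces
$$x_i(\mathcal{A}x^{m-1})_i\ge x_i(\mathcal{A}^zx^{m-1})_i.$$
Since $\mathcal{A}^z$ is a P tensor, there exists an index $i$ with $x_i(\mathcal{A}^zx^{m-1})_i>0$, and therefore $x_i(\mathcal{A}x^{m-1})_i>0$, showing that $\mathcal{A}$ is a P tensor; because $\mathcal{A}\in\mathcal{A}^I$ was arbitrary, $\mathcal{A}^I$ is an interval P tensor.

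The only genuine obstacle is checking that the symmetry and double B-tensor structure are actually preserved under the sign-twist $\mathcal{A}\mapsto\mathcal{A}^z$, and this is automatic once one observes that $\mathcal{A}^z$ lies in $\mathcal{A}^I$ and that the factor $z_{i_1}\cdots z_{i_m}$ is symmetric in its indices. Everything else is a transcription of the proof of Corollary \ref{c62} with ``B-tensor'' replaced by ``double B-tensor'' and Proposition \ref{p1} replaced by Proposition \ref{p9}.
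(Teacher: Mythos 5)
Your proposal is correct and coincides with the paper's intended argument: the paper explicitly omits the proof of Corollary \ref{c8}, stating only that it is analogous to that of Corollary \ref{c62}, and your transcription — replacing Proposition \ref{p1} by Proposition \ref{p9} and verifying that the sign-twisted tensor $\mathcal{A}^z$ remains in $\mathcal{A}^I$ and symmetric — is exactly that analogy carried out. No issues.
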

The proof of Corollary \ref{c8} is analogous to that of Corollary \ref{c62} and thus omitted herein.

The determination of interval double B-tensors is more complex than that of interval B-tensors, yet their necessary and sufficient conditions can still be expressed as a set of inequalities dependent solely on the interval endpoints. The following theorem is the primary result of this section.
\begin{theorem}\label{th2}
If $\mathcal{A}^I$ is an interval tensor in $T_{m,n}$. Then $\mathcal{A}^I$ is an interval double B-tensor if and only if
\begin{itemize}
  \item [(a)] For each $i_1\in [n]$, $\underline{a}_{i_1i_1\cdots i_1}>\max\{0,\overline{a}_{i_1i_2\cdots i_m}:(i_2,i_3,\cdots,i_m)\neq(i_1,i_1,\cdots,i_1)\}$;
  \item [(b)] For all $i_1\in [n]$, and $(i_2,i_3,\cdots,i_m)\neq (i_1,i_1,\cdots,i_1)$:
  \begin{itemize}
  \item [(b$_1$)] $\underline{a}_{i_1i_1\cdots i_1}-\overline{a}_{i_1i_2\cdots i_m}\geq
      \max\left\{0,\sum\limits_{\substack{(k_2,k_3,\cdots,k_m)\neq(i_1,i_1,\cdots,i_1)\\
      (k_2,k_3,\cdots,k_m)\neq(i_2,i_3,\cdots,i_m)}}(\overline{a}_{i_1i_2\cdots i_m}-\underline{a}_{i_1k_2\cdots k_m})\right\}$;
  \item [(b$_2$)] $\underline{a}_{i_1i_1\cdots i_1}\geq\left\{0,-\sum\limits_{(k_2,k_3,\cdots,k_m)\neq(i_1,i_1,\cdots,i_1)}\underline{a}_{i_1k_2\cdots k_m}\right\}$;
  \end{itemize}
  \item [(c)] For all $i_1,j_1\in [n]$, $i_1\neq j_1$, and $(i_2,i_3,\cdots,i_m)\neq (i_1,i_1,\cdots,i_1)$, $(j_2,j_3,\cdots,j_m)\neq (j_1,j_1,\cdots,j_1)$:
  \begin{itemize}
  \item [(c$_1$)] $(\underline{a}_{i_1i_1\cdots i_1}-\overline{a}_{i_1i_2\cdots i_m})\cdot(\underline{a}_{j_1j_1\cdots j_1}-\overline{a}_{j_1j_2\cdots j_m})>\\
      \left(\max\left\{0,\sum\limits_{\substack{(k_2,k_3,\cdots,k_m)\neq(i_1,i_1,\cdots,i_1)\\
      (k_2,k_3,\cdots,k_m)\neq(i_2,i_3,\cdots,i_m)}}(\overline{a}_{i_1i_2\cdots i_m}-\underline{a}_{i_1k_2\cdots k_m})\right\}\right)\cdot\\
      \left(\max\left\{0,\sum\limits_{\substack{(k_2,k_3,\cdots,k_m)\neq(j_1,j_1,\cdots,j_1)\\
      (k_2,k_3,\cdots,k_m)\neq(j_2,j_3,\cdots,j_m)}}(\overline{a}_{j_1j_2\cdots j_m}-\underline{a}_{j_1k_2\cdots k_m})\right\}\right)$;
  \item [(c$_2$)] $(\underline{a}_{i_1i_1\cdots i_1}-\overline{a}_{i_1i_2\cdots i_m})\cdot\underline{a}_{j_1j_1\cdots j_1}>\left(\max\left\{0,\sum\limits_{\substack{(k_2,k_3,\cdots,k_m)\neq(i_1,i_1,\cdots,i_1)\\
      (k_2,k_3,\cdots,k_m)\neq(i_2,i_3,\cdots,i_m)}}(\overline{a}_{i_1i_2\cdots i_m}-\underline{a}_{i_1k_2\cdots k_m})\right\}\right)\cdot\\
      \left(\max\left\{0,-\sum\limits_{(k_2,k_3,\cdots,k_m)\neq(j_1,j_1,\cdots,j_1)}\underline{a}_{j_1k_2\cdots k_m}\right\}\right)$;
  \item [(c$_3$)]$\underline{a}_{i_1i_1\cdots i_1}\cdot\underline{a}_{j_1j_1\cdots j_1}>\left(\max\left\{0,-\sum\limits_{(k_2,k_3,\cdots,k_m)\neq(i_1,i_1,\cdots,i_1)}\underline{a}_{i_1k_2\cdots k_m}\right\}\right)\cdot\\
      \left(\max\left\{0,-\sum\limits_{(k_2,k_3,\cdots,k_m)\neq(j_1,j_1,\cdots,j_1)}\underline{a}_{j_1k_2\cdots k_m}\right\}\right)$.
      \end{itemize}
\end{itemize}
\end{theorem}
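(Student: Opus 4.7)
The plan is to follow the template of Theorem \ref{th1} but respecting the three clauses of Definition \ref{def-dbt}. The whole analysis rests on the observation that, for any $\mathcal{A}\in\mathcal{A}^I$, the row maximum $\gamma^+_{i_1}(\mathcal{A})$ is either $0$ (all off-diagonals of row $i_1$ are nonpositive) or equals a specific positive off-diagonal entry $a_{i_1i_2^*\cdots i_m^*}$. This dichotomy is precisely what splits clause (b) of Definition \ref{def-dbt} into (b$_1$)--(b$_2$), and clause (c) into (c$_1$)--(c$_3$).

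For \textbf{necessity}, I would exhibit explicit witness tensors in $\mathcal{A}^I$ and apply Definition \ref{def-dbt} to them. Condition (a) uses the same extreme-point construction as in the proof of Theorem \ref{th1}. Condition (b$_1$) (resp.\ (b$_2$)) is read off the tensor whose $i_1$-th row pins $a_{i_1i_1\cdots i_1}=\underline{a}_{i_1i_1\cdots i_1}$, pins $a_{i_1i_2\cdots i_m}=\overline{a}_{i_1i_2\cdots i_m}$ (resp.\ leaves every off-diagonal at $\underline{a}_{i_1k_2\cdots k_m}$), and fixes all remaining off-diagonals of row $i_1$ at their lower bounds; on such a tensor $\gamma^+_{i_1}$ equals $\overline{a}_{i_1i_2\cdots i_m}$ (resp.\ $0$) in the generic range of parameters, and clause (b) of Definition \ref{def-dbt} then transcribes into (b$_1$) (resp.\ (b$_2$)). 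Conditions (c$_1$)--(c$_3$) come from combining the two types of row-wise witnesses independently at rows $i_1$ and $j_1$ and invoking clause (c) of Definition \ref{def-dbt}.

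For \textbf{sufficiency}, fix $\mathcal{A}\in\mathcal{A}^I$. Clause (a) of Definition \ref{def-dbt} is immediate from $a_{i_1i_1\cdots i_1}\geq\underline{a}_{i_1i_1\cdots i_1}>\max\{0,\overline{a}_{i_1i_2\cdots i_m}\}\geq\gamma^+_{i_1}(\mathcal{A})$ by condition (a) of the theorem. For clause (b) I would case-split on whether $\gamma^+_{i_1}(\mathcal{A})$ equals some $a_{i_1i_2^*\cdots i_m^*}>0$ or equals $0$: in the first case I bound the LHS below by $\underline{a}_{i_1i_1\cdots i_1}-\overline{a}_{i_1i_2^*\cdots i_m^*}$ and the RHS above entrywise by $\overline{a}_{i_1i_2^*\cdots i_m^*}-\underline{a}_{i_1k_2\cdots k_m}$, then invoke (b$_1$); in the second case clause (b) reduces to $a_{i_1i_1\cdots i_1}\geq-\sum_{(k_2,\cdots)\neq(i_1,\cdots)}a_{i_1k_2\cdots k_m}$ and I invoke (b$_2$). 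Clause (c) follows by applying the same dichotomy simultaneously at rows $i_1$ and $j_1$; the four resulting sign combinations collapse by symmetry into the three cases (c$_1$), (c$_2$), (c$_3$).

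The \textbf{main obstacle} is ensuring that the witness tensor for (b$_1$) really does realise $\gamma^+_{i_1}=\overline{a}_{i_1i_2\cdots i_m}$: this can fail when some $\underline{a}_{i_1k_2\cdots k_m}$ with $(k_2,\cdots)\neq(i_2,\cdots)$ exceeds $\overline{a}_{i_1i_2\cdots i_m}$, or when $\overline{a}_{i_1i_2\cdots i_m}<0$. In those degenerate situations (b$_1$) cannot come from a fresh witness and must be shown to follow from condition (a) of the theorem together with (b$_2$) or another instance of (b$_1$). A similar signed-sum subtlety governs (c$_1$)--(c$_3$), in which the guards $\max\{0,\cdot\}$ encode which branch of the dichotomy is binding; carefully aligning these branches across the two directions of the equivalence is the principal bookkeeping task.
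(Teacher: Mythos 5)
Your proposal is correct and follows essentially the same route as the paper: the same extreme-point witness tensors (one off-diagonal entry of one row raised to its upper bound, two such rows for clause (c), and $\underline{\mathcal{A}}$ itself) for necessity, and the same case split on whether $\gamma^+_{i_1}(\mathcal{A})$ is positive or zero for sufficiency. The one "main obstacle" you flag is resolved in the paper more simply than you anticipate: one never needs the witness to satisfy $\gamma^+_{i_1}(\mathcal{A}^1)=\overline{a}_{i_1i_2\cdots i_m}$, only the one-sided bounds $\gamma^+_{i_1}(\mathcal{A}^1)\geq\overline{a}_{i_1i_2\cdots i_m}$ and $\gamma^+_{i_1}(\mathcal{A}^1)\geq a^1_{i_1k_2\cdots k_m}\geq 0$, which make the whole chain of inequalities go through uniformly, so the degenerate case requires no appeal to condition (a) or to other instances of (b$_1$).
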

\begin{proof}
``{\bf Necessity.}'' Since $\mathcal{A}^I$ is an interval double B-tensor, then all $\mathcal{A}\in\mathcal{A}^I$ is a double B-tensor.

$(a)$ The tensor $\mathcal{A}'$ defined in Remark \ref{r2} is a double B-tensor as $\mathcal{A}'\in\mathcal{A}^I$. According to $(a)$ of Definition \ref{def-dbt},
for each $i_1\in [n]$,
\begin{align*}
\underline{a}_{i_1i_1\cdots i_1}&=a'_{i_1i_1\cdots i_1}\\
&>\max\{0,a'_{i_1i_2\cdots i_m}:(i_2,i_3,\cdots,i_m)\neq(i_1,i_1,\cdots,i_1)\}\\
&=\max\{0,\overline{a}_{i_1i_2\cdots i_m}:(i_2,i_3,\cdots,i_m)\neq(i_1,i_1,\cdots,i_1)\}.
\end{align*}

$(b)$  For any $i_1,j_1\in [n]$, $i_1\neq j_1$, and $(i_2,i_3,\cdots,i_m)\neq (i_1,i_1,\cdots,i_1)$, $(j_2,j_3,\cdots,j_m)\neq (j_1,j_1,\cdots,j_1)$:

$(b_1)$ Define $\mathcal{A}^1=(a^1_{l_1l_2\cdots l_m})$ as
$$a^1_{l_1l_2\cdots l_m}=\begin{cases}
\overline{a}_{i_1i_2\cdots i_m},\mbox{ if }(l_1,l_2,\cdots,i_m)=(i_1,i_2,\cdots,i_m),\\
\underline{a}_{l_1l_2\cdots l_m}\mbox{ otherwise}.
\end{cases}$$
Clearly $\mathcal{A}^1\in\mathcal{A}^I$ is a double B-tensor. Then
\begin{align*}
\underline{a}_{i_1i_1\cdots i_1}-\overline{a}_{i_1i_2\cdots i_m} &\geq a^1_{i_1i_1\cdots i_1}-\gamma_{i_1}^+(\mathcal{A}^1)\\
&\geq\sum\limits_{(k_2,k_3,\cdots,k_m)\neq(i_1,i_1,\cdots,i_1)}(\gamma_{i_1}^+(\mathcal{A}^1)-a^1_{i_1k_2\cdots k_m})\\
&\geq\max\left\{0,\sum\limits_{\substack{(k_2,k_3,\cdots,k_m)\neq(i_1,i_1,\cdots,i_1)\\
      (k_2,k_3,\cdots,k_m)\neq(i_2,i_3,\cdots,i_m)}}(\overline{a}_{i_1i_2\cdots i_m}-\underline{a}_{i_1k_2\cdots k_m})\right\}.
\end{align*}
The second inequality holds as $\mathcal{A}^1$ is a double B-tensor, and last inequality holds as $\gamma_{i_1}^+(\mathcal{A}^1)\geq a^1_{i_1k_2\cdots k_m}$ for all $(k_2,k_3,\cdots,k_m)\neq(i_1,i_1,\cdots,i_1)$, and $\gamma_{i_1}^+(\mathcal{A}^1)\geq a^1_{i_1i_2\cdots i_m}=\overline{a}_{i_1i_2\cdots i_m}$.

$(b_2)$ Since $\underline{\mathcal{A}}\in\mathcal{A}^I$ is a double B-tensor. Then
\begin{align*}
\underline{a}_{i_1i_1\cdots i_1}&\geq\underline{a}_{i_1i_1\cdots i_1}-\gamma_{i_1}^+(\underline{\mathcal{A}})\\
&\geq\sum\limits_{(k_2,k_3,\cdots,k_m)\neq(i_1,i_1,\cdots,i_1)}(\gamma_{i_1}^+(\underline{\mathcal{A}})-\underline{a}_{i_1k_2\cdots k_m})\\
&\geq\max\left\{0,-\sum\limits_{(k_2,k_3,\cdots,k_m)\neq(i_1,i_1,\cdots,i_1)}\underline{a}_{i_1k_2\cdots k_m}\right\}.
\end{align*}
These inequalities hold due to $\gamma_{i_1}^+(\underline{\mathcal{A}})\geq 0$ and reasoning similar to that the proof of $(b_2)$.

$(c)$ Similar to the proof of $(b)$, for any $i_1,j_1\in [n]$, $i_1\neq j_1$, and $(i_2,i_3,\cdots,i_m)\neq (i_1,i_1,\cdots,i_1)$, $(j_2,j_3,\cdots,j_m)\neq (j_1,j_1,\cdots,j_1)$:

$(c_1)$ Define $\mathcal{A}^2=(a^2_{l_1l_2\cdots l_m})$ as
$$a^2_{l_1l_2\cdots l_m}=\begin{cases}
\overline{a}_{i_1i_2\cdots i_m},\mbox{ if }(l_1,l_2,\cdots,i_m)=(i_1,i_2,\cdots,i_m),\\
\overline{a}_{j_1j_2\cdots j_m},\mbox{ if }(l_1,l_2,\cdots,i_m)=(j_1,j_2,\cdots,j_m),\\
\underline{a}_{l_1l_2\cdots l_m}\mbox{ otherwise}.
\end{cases}$$
Clearly $\mathcal{A}^2\in\mathcal{A}^I$ is a double B-tensor. Then
\begin{align*}
&(\underline{a}_{i_1i_1\cdots i_1}-\overline{a}_{i_1i_2\cdots i_m})\cdot(\underline{a}_{j_1j_1\cdots j_1}-
\overline{a}_{j_1j_2\cdots j_m})\geq(a^2_{i_1i_1\cdots i_1}-\gamma_{i_1}^+(\mathcal{A}^2))\cdot(a^2_{j_1j_1\cdots j_1}-\gamma_{j_1}^+(\mathcal{A}^2))\\
&>\left(\sum\limits_{(k_2,k_3,\cdots,k_m)\neq(i_1,i_1,\cdots,i_1)}(\gamma_{i_1}^+(\mathcal{A}^2)-a^2_{i_1k_2\cdots k_m})\right)\cdot\left(\sum\limits_{(k_2,k_3,\cdots,k_m)\neq(j_1,j_1,\cdots,j_1)}(\gamma_{j_1}^+(\mathcal{A}^2)-a^2_{j_1k_2\cdots k_m})\right)\\
&\geq\left(\max\left\{0,\sum\limits_{\substack{(k_2,k_3,\cdots,k_m)\neq(i_1,i_1,\cdots,i_1)\\
      (k_2,k_3,\cdots,k_m)\neq(i_2,i_3,\cdots,i_m)}}(\overline{a}_{i_1i_2\cdots i_m}-\underline{a}_{i_1k_2\cdots k_m})\right\}\right)\cdot\\
&~~~\left(\max\left\{0,\sum\limits_{\substack{(k_2,k_3,\cdots,k_m)\neq(j_1,j_1,\cdots,j_1)\\
      (k_2,k_3,\cdots,k_m)\neq(j_2,j_3,\cdots,j_m)}}(\overline{a}_{j_1j_2\cdots j_m}-\underline{a}_{j_1k_2\cdots k_m})\right\}\right).
\end{align*}

$(c_2)$ Since $\mathcal{A}^1\in\mathcal{A}^I$ is a double B-tensor. Then
\begin{align*}
&(\underline{a}_{i_1i_1\cdots i_1}-\overline{a}_{i_1i_2\cdots i_m})\cdot\underline{a}_{j_1j_1\cdots j_1}\geq(a^1_{i_1i_1\cdots i_1}-\gamma_{i_1}^+(\mathcal{A}^1))\cdot(a^1_{j_1j_1\cdots j_1}-\gamma_{j_1}^+(\mathcal{A}^1))\\
&>\left(\sum\limits_{(k_2,k_3,\cdots,k_m)\neq(i_1,i_1,\cdots,i_1)}(\gamma_{i_1}^+(\mathcal{A}^1)-a^1_{i_1k_2\cdots k_m})\right)\cdot\left(\sum\limits_{(k_2,k_3,\cdots,k_m)\neq(j_1,j_1,\cdots,j_1)}(\gamma_{j_1}^+(\mathcal{A}^1)-a^1_{j_1k_2\cdots k_m})\right)\\
&\geq\left(\max\left\{0,\sum\limits_{\substack{(k_2,k_3,\cdots,k_m)\neq(i_1,i_1,\cdots,i_1)\\
      (k_2,k_3,\cdots,k_m)\neq(i_2,i_3,\cdots,i_m)}}(\overline{a}_{i_1i_2\cdots i_m}-\underline{a}_{i_1k_2\cdots k_m})\right\}\right)\cdot \\
& ~~~ \left(\max\left\{0,-\sum\limits_{(k_2,k_3,\cdots,k_m)\neq(j_1,j_1,\cdots,j_1)}\underline{a}_{j_1k_2\cdots k_m})\right\}\right).
\end{align*}

$(c_3)$ Since $\underline{\mathcal{A}}\in\mathcal{A}^I$ is a double B-tensor. Then
\begin{align*}
&\underline{a}_{i_1i_1\cdots i_1}\cdot\underline{a}_{j_1j_1\cdots j_1}\geq(\underline{a}_{i_1i_1\cdots i_1}-\gamma_{i_1}^+(\underline{\mathcal{A}}))\cdot(\underline{a}_{j_1j_1\cdots j_1}-\gamma_{j_1}^+(\underline{\mathcal{A}}))\\
&>\left(\sum\limits_{(k_2,k_3,\cdots,k_m)\neq(i_1,i_1,\cdots,i_1)}(\gamma_{i_1}^+(\underline{\mathcal{A}})-\underline{a}_{i_1k_2\cdots k_m})\right)\cdot\left(\sum\limits_{(k_2,k_3,\cdots,k_m)\neq(j_1,j_1,\cdots,j_1)}(\gamma_{j_1}^+(\underline{\mathcal{A}})-\underline{a}_{j_1k_2\cdots k_m})\right)\\
&\geq\left(\max\left\{0,-\sum\limits_{(k_2,k_3,\cdots,k_m)\neq(i_1,i_1,\cdots,i_1)}\underline{a}_{i_1k_2\cdots k_m})\right\}\right)\cdot\left(\max\left\{0,-\sum\limits_{(k_2,k_3,\cdots,k_m)\neq(j_1,j_1,\cdots,j_1)}(\underline{a}_{j_1k_2\cdots k_m})\right\}\right).
\end{align*}

``{\bf Sufficiency.}'' Let $\mathcal{A}\in\mathcal{A}^I$.

According to $(a)$, for all $i_1\in[n]$,
\begin{align*}
a_{i_1i_1\cdots i_1}&\geq\underline{a}_{i_1i_1\cdots i_1}\\
&>\max\{0,\overline{a}_{i_1i_2\cdots i_m}:(i_2,i_3,\cdots,i_m)\neq(i_1,i_1,\cdots,i_1)\}\\
&\geq\max\{0,a_{i_1i_2\cdots i_m}:(i_2,i_3,\cdots,i_m)\neq(i_1,i_1,\cdots,i_1)\},
\end{align*}
then condition $(a)$ of Definition \ref{def-dbt} is obtained.

To proof condition $(b)$ of Definition \ref{def-dbt}, we divide the discussion into the following two cases for any $i_1\in[n]$.

\textbf{Case $b_1$.} When $\gamma_{i_1}^+(\mathcal{A})>0$, then there is $(i_2,i_3,\cdots,i_m)\neq(i_1,i_1,\cdots,i_1)$ such that $\gamma_{i_1}^+(\mathcal{A})=a_{i_1i_2\cdots i_m}$. So that,
\begin{align*}
a_{i_1i_1\cdots i_1}-\gamma_{i_1}^+(\mathcal{A})&\geq \underline{a}_{i_1i_1\cdots i_1}-\overline{a}_{i_1i_2\cdots i_m}\\
&\geq\max\left\{0,\sum\limits_{\substack{(k_2,k_3,\cdots,k_m)\neq(i_1,i_1,\cdots,i_1)\\
      (k_2,k_3,\cdots,k_m)\neq(i_2,i_3,\cdots,i_m)}}(\overline{a}_{i_1i_2\cdots i_m}-\underline{a}_{i_1k_2\cdots k_m})\right\}\\
&=\sum\limits_{\substack{(k_2,k_3,\cdots,k_m)\neq(i_1,i_1,\cdots,i_1)\\
      (k_2,k_3,\cdots,k_m)\neq(i_2,i_3,\cdots,i_m)}}(\overline{a}_{i_1i_2\cdots i_m}-\underline{a}_{i_1k_2\cdots k_m})\\
&\geq \sum\limits_{(k_2,k_3,\cdots,k_m)\neq(i_1,i_1,\cdots,i_1)}(\gamma_{i_1}^+(\mathcal{A})-a_{i_1k_2\cdots k_m}).
\end{align*}
The second inequality holds due to the condition $(b_1)$.

\textbf{Case $b_2$.} When $\gamma_{i_1}^+(\mathcal{A})=0$, then,
\begin{align*}
a_{i_1i_1\cdots i_1}-\gamma_{i_1}^+(\mathcal{A})&=a_{i_1i_1\cdots i_1}\geq \underline{a}_{i_1i_1\cdots i_1}\\
&\geq\max\left\{0,-\sum\limits_{(k_2,k_3,\cdots,k_m)\neq(i_1,i_1,\cdots,i_1)}\underline{a}_{i_1k_2\cdots k_m}\right\}\\
&=-\sum\limits_{(k_2,k_3,\cdots,k_m)\neq(i_1,i_1,\cdots,i_1)}\underline{a}_{i_1k_2\cdots k_m}\\
&\geq \sum\limits_{(k_2,k_3,\cdots,k_m)\neq(i_1,i_1,\cdots,i_1)}(\gamma_{i_1}^+(\mathcal{A})-a_{i_1k_2\cdots k_m}).
\end{align*}
The second inequality holds due to the condition $(b_2)$.

Then condition $(b)$ of Definition \ref{def-dbt} is obtained.

To proof of condition $(c)$ in Definition \ref{def-dbt} is similar to that of condition $(b)$, we divide the discussion into the following there cases for any $i_1,j_1\in[n]$, $i_1\neq j_1$.

\textbf{Case $c_1$.} When $\gamma_{i_1}^+(\mathcal{A})>0$ and $\gamma_{j_1}^+(\mathcal{A})>0$, then there are $(i_2,i_3,\cdots,i_m)\neq(i_1,i_1,\cdots,i_1)$ and $(j_2,j_3,\cdots,j_m)\neq(j_1,j_1,\cdots,j_1)$ such that $\gamma_{i_1}^+(\mathcal{A})=a_{i_1i_2\cdots i_m}$ and $\gamma_{j_1}^+(\mathcal{A})=a_{j_1j_2\cdots j_m}$. So that,
\begin{align*}
&(a_{i_1i_1\cdots i_1}-\gamma_{i_1}^+(\mathcal{A}))\cdot(a_{j_1j_1\cdots j_1}-\gamma_{j_1}^+(\mathcal{A}))\geq (\underline{a}_{i_1i_1\cdots i_1}-\overline{a}_{i_1i_2\cdots i_m})\cdot(\underline{a}_{j_1j_1\cdots j_1}-\overline{a}_{j_1j_2\cdots j_m})\\
&>\left(\max\left\{0,\sum\limits_{\substack{(k_2,k_3,\cdots,k_m)\neq(i_1,i_1,\cdots,i_1)\\
      (k_2,k_3,\cdots,k_m)\neq(i_2,i_3,\cdots,i_m)}}(\overline{a}_{i_1i_2\cdots i_m}-\underline{a}_{i_1k_2\cdots k_m})\right\}\right)\cdot\\
&~~~\left(\max\left\{0,\sum\limits_{\substack{(k_2,k_3,\cdots,k_m)\neq(j_1,j_1,\cdots,j_1)\\
      (k_2,k_3,\cdots,k_m)\neq(j_2,j_3,\cdots,j_m)}}(\overline{a}_{j_1j_2\cdots j_m}-\underline{a}_{j_1k_2\cdots k_m})\right\}\right)\\
&=\left(\sum\limits_{\substack{(k_2,k_3,\cdots,k_m)\neq(i_1,i_1,\cdots,i_1)\\
      (k_2,k_3,\cdots,k_m)\neq(i_2,i_3,\cdots,i_m)}}(\overline{a}_{i_1i_2\cdots i_m}-\underline{a}_{i_1k_2\cdots k_m})\right)\cdot\left(\sum\limits_{\substack{(k_2,k_3,\cdots,k_m)\neq(j_1,j_1,\cdots,j_1)\\
      (k_2,k_3,\cdots,k_m)\neq(j_2,j_3,\cdots,j_m)}}(\overline{a}_{j_1j_2\cdots j_m}-\underline{a}_{j_1k_2\cdots k_m})\right)\\
&\geq \left(\sum\limits_{(k_2,k_3,\cdots,k_m)\neq(i_1,i_1,\cdots,i_1)}(\gamma_{i_1}^+(\mathcal{A})-a_{i_1k_2\cdots k_m})\right)\cdot\left(\sum\limits_{(k_2,k_3,\cdots,k_m)\neq(j_1,j_1,\cdots,j_1)}(\gamma_{j_1}^+(\mathcal{A})-a_{j_1k_2\cdots k_m})\right).
\end{align*}
The second inequality holds due to the condition $(c_1)$.

\textbf{Case $c_2$.} When one of $\{\gamma_{i_1}^+(\mathcal{A}),\gamma_{j_1}^+(\mathcal{A})\}$ is positive and the other one is zero, without loss the generality, let $\gamma_{i_1}^+(\mathcal{A})>0$ and $\gamma_{j_1}^+(\mathcal{A})=0$, then there is $(i_2,i_3,\cdots,i_m)\neq(i_1,i_1,\cdots,i_1)$ such that $\gamma_{i_1}^+(\mathcal{A})=a_{i_1j_2\cdots j_m}$. So that,
\begin{align*}
&(a_{i_1i_1\cdots i_1}-\gamma_{i_1}^+(\mathcal{A}))\cdot(a_{j_1j_1\cdots j_1}-\gamma_{j_1}^+(\mathcal{A}))=(a_{i_1i_1\cdots i_1}-\gamma_{i_1}^+(\mathcal{A}))\cdot a_{j_1j_1\cdots j_1}\geq (\underline{a}_{i_1i_1\cdots i_1}-\overline{a}_{i_1i_2\cdots i_m})\cdot\underline{a}_{j_1j_1\cdots j_1}\\
&>\left(\max\left\{0,\sum\limits_{\substack{(k_2,k_3,\cdots,k_m)\neq(i_1,i_1,\cdots,i_1)\\
      (k_2,k_3,\cdots,k_m)\neq(i_2,i_3,\cdots,i_m)}}(\overline{a}_{i_1i_2\cdots i_m}-\underline{a}_{i_1k_2\cdots k_m})\right\}\right)\cdot\\
&~~~\left(\max\left\{0,-\sum\limits_{(k_2,k_3,\cdots,k_m)\neq(j_1,j_1,\cdots,j_1)}\underline{a}_{j_1k_2\cdots k_m}\right\}\right)\\
&=\left(\sum\limits_{\substack{(k_2,k_3,\cdots,k_m)\neq(i_1,i_1,\cdots,i_1)\\
      (k_2,k_3,\cdots,k_m)\neq(i_2,i_3,\cdots,i_m)}}(\overline{a}_{i_1i_2\cdots i_m}-\underline{a}_{i_1k_2\cdots k_m})\right)\cdot\left(-\sum\limits_{(k_2,k_3,\cdots,k_m)\neq(j_1,j_1,\cdots,j_1)}\underline{a}_{j_1k_2\cdots k_m}\right)\\
&\geq \left(\sum\limits_{(k_2,k_3,\cdots,k_m)\neq(i_1,i_1,\cdots,i_1)}(\gamma_{i_1}^+(\mathcal{A})-a_{i_1k_2\cdots k_m})\right)\cdot\left(\sum\limits_{(k_2,k_3,\cdots,k_m)\neq(j_1,j_1,\cdots,j_1)}(\gamma_{j_1}^+(\mathcal{A})-a_{j_1k_2\cdots k_m})\right).
\end{align*}
The second inequality holds due to the condition $(c_2)$.

\textbf{Case $c_3$.} When $\gamma_{i_1}^+(\mathcal{A})=\gamma_{j_1}^+(\mathcal{A})=0$, then ,
\begin{align*}
&(a_{i_1i_1\cdots i_1}-\gamma_{i_1}^+(\mathcal{A}))\cdot(a_{j_1j_1\cdots j_1}-\gamma_{j_1}^+(\mathcal{A}))=a_{i_1i_1\cdots i_1}\cdot a_{j_1j_1\cdots j_1}\geq \underline{a}_{i_1i_1\cdots i_1}\cdot\underline{a}_{j_1j_1\cdots j_1}\\
&>\left(\max\left\{0,-\sum\limits_{(k_2,k_3,\cdots,k_m)\neq(i_1,i_1,\cdots,i_1)}\underline{a}_{i_1k_2\cdots k_m}\right\}\right)\cdot\left(\max\left\{0,-\sum\limits_{(k_2,k_3,\cdots,k_m)\neq(j_1,j_1,\cdots,j_1)}\underline{a}_{j_1k_2\cdots k_m}\right\}\right)\\
&=\left(-\sum\limits_{(k_2,k_3,\cdots,k_m)\neq(i_1,i_1,\cdots,i_1)}\underline{a}_{i_1k_2\cdots k_m}\right)\cdot\left(-\sum\limits_{(k_2,k_3,\cdots,k_m)\neq(j_1,j_1,\cdots,j_1)}\underline{a}_{j_1k_2\cdots k_m}\right)\\
&\geq \left(\sum\limits_{(k_2,k_3,\cdots,k_m)\neq(i_1,i_1,\cdots,i_1)}(\gamma_{i_1}^+(\mathcal{A})-a_{i_1k_2\cdots k_m})\right)\cdot\left(\sum\limits_{(k_2,k_3,\cdots,k_m)\neq(j_1,j_1,\cdots,j_1)}(\gamma_{j_1}^+(\mathcal{A})-a_{j_1k_2\cdots k_m})\right).
\end{align*}
The second inequality holds due to the condition $(c_3)$.

Then condition $(c)$ of Definition \ref{def-dbt} is obtained. Therefore, $\mathcal{A}$ is a double B-tensor, thus $\mathcal{A}^I$ is an interval double B-tensor
\end{proof}

Although Theorem \ref{th2} is formally complex, its significance lies in providing a fully computable criterion. Similar to the case of interval B-tensors, we also have corollaries that simplify the verification.

\begin{corollary}\label{c9}
Let $\mathcal{A}^I$ and $\mathcal{B}^I$ be two interval tensors in $T_{m,n}$, here, interval tensor $\mathcal{B}^I$ is defined as Corollary \textup{\ref{c1}} or Corollary \textup{\ref{c2}}. Then $\mathcal{A}^I$ is an interval double B-tensor if and only if $\mathcal{B}^I$ is an interval double B-tensor.
\end{corollary}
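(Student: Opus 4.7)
The plan is to mirror the proofs of Corollaries \ref{c1} and \ref{c2}, adapted to the richer system of conditions in Theorem \ref{th2}. In both constructions $\mathcal{B}^I\subseteq\mathcal{A}^I$, so necessity is immediate: every tensor in $\mathcal{B}^I$ already lies in $\mathcal{A}^I$ and so is a double B-tensor. The substantive direction is sufficiency, where the strategy is to check each of (a), (b$_1$), (b$_2$), (c$_1$), (c$_2$), (c$_3$) of Theorem \ref{th2} for $\mathcal{A}^I$ by appealing to the corresponding condition for $\mathcal{B}^I$.

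For the Corollary \ref{c1}-style construction, the argument is essentially cost-free. A direct inspection of Theorem \ref{th2} reveals that diagonal entries appear exclusively as $\underline{a}_{i_1i_1\cdots i_1}$, while upper diagonal bounds play no role in any of (a)--(c$_3$). Since this construction preserves lower-diagonal bounds and all off-diagonal bounds, the systems of inequalities for $\mathcal{A}^I$ and $\mathcal{B}^I$ are term-for-term identical, giving the equivalence immediately.

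For the Corollary \ref{c2}-style construction, the lower bounds are preserved and only the upper bounds at indices $(i_1,j_2,\ldots,j_m)\in K$ are tightened, with $\overline{b}_{i_1j_2\cdots j_m}=\underline{a}_{i_1j_2\cdots j_m}$. I would first dispose of indices $(i_1,j_2,\ldots,j_m)\notin K$: the relevant $\overline{a}$-values coincide with $\overline{b}$-values and the inequalities transfer verbatim. For $(i_1,j_2,\ldots,j_m)\in K$ I would invoke the dominating tuple $(i_2,\ldots,i_m)$ with $\overline{a}_{i_1j_2\cdots j_m}\leq \underline{a}_{i_1i_2\cdots i_m}\leq \overline{b}_{i_1i_2\cdots i_m}$. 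Then in condition (a), and in the left-hand side difference $\underline{a}_{i_1i_1\cdots i_1}-\overline{a}_{i_1j_2\cdots j_m}$ of (b$_1$), replacing $\overline{a}_{i_1j_2\cdots j_m}$ by $\overline{b}_{i_1i_2\cdots i_m}$ only shrinks the quantity, which is bounded below by the same condition for $\mathcal{B}^I$ applied at the dominating tuple. On the right-hand side of (b$_1$), the dominance makes each summand $\overline{a}_{i_1j_2\cdots j_m}-\underline{a}_{i_1k_2\cdots k_m}$ no larger than $\overline{b}_{i_1i_2\cdots i_m}-\underline{a}_{i_1k_2\cdots k_m}$; any additional summand introduced by the change of exclusion index is non-negative and can be absorbed. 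Condition (b$_2$) involves only lower bounds and is immediate since $\underline{b}=\underline{a}$.

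The most delicate part will be the product-type conditions (c$_1$)--(c$_3$), which couple two distinct indices $i_1\neq j_1$ and require care when both defining tuples lie in $K$. The plan is to apply the dominance separately in each factor, use (a) and (b$_1$) to ensure the relevant factors are positive so products preserve the inequality direction, and split into sub-cases according to which of (c$_1$)--(c$_3$) for $\mathcal{B}^I$ is being invoked in each factor. Each sub-case then reduces to the corresponding inequality for $\mathcal{B}^I$, so no genuinely new idea is required; this is precisely why the argument parallels Corollaries \ref{c1} and \ref{c2} closely enough that the detailed case-by-case verification may reasonably be omitted from the paper.
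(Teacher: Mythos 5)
Your proposal is correct and follows essentially the same route as the paper: necessity from $\mathcal{B}^I\subseteq\mathcal{A}^I$, the Corollary \ref{c1}-style case dispatched by observing that diagonal entries enter Theorem \ref{th2} only through $\underline{a}_{i_1i_1\cdots i_1}$, and the Corollary \ref{c2}-style case handled by transferring conditions (b) and (c) at a tuple in $K$ to the corresponding conditions for $\mathcal{B}^I$ at a dominating tuple, with the product conditions treated factor by factor. The paper likewise writes out only the (b$_1$) chain and declares (b$_2$) and (c) analogous, so your level of detail matches (indeed slightly exceeds) the published argument.
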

\begin{proof}
When interval tensor $\mathcal{B}^I$ is as defined in Corollary \textup{\ref{c2}}, the reasoning for the validity of the equivalence is similar to that in Corollary \textup{\ref{c2}}. We now show that the equivalence holds when  $\mathcal{B}^I$ is as defined in Corollary \textup{\ref{c3}}.

As $\mathcal{B}^I\subset \mathcal{A}^I$, the necessity is evident, we proceed to discuss the sufficiency.

When $K=\emptyset$, $\mathcal{B}^I=\mathcal{A}^I$, is trivial. Suppose $K\neq\emptyset$, by comparing in $\mathcal{B}^I$ and $\mathcal{A}^I$, and since for all $(i_2,i_3,\cdots,i_m)\neq(i_1,i_1,\cdots,i_1)$, $\overline{a}_{i_1i_2\cdots i_m}$ only appears in $(b)$ and $(c)$ of Theorem \ref{th2}, it is sufficient to prove that $(b)$ and $(c)$ of Theorem \ref{th2} hold when $(i_1,l_2,\cdots, l_m)\in K$ for $\mathcal{A}^I$.

Let $(i_1,l_2,\cdots,l_m)\in K$, and $\underline{a}_{i_1i_2'\cdots i_m'}:=\max\{\underline{a}_{i_1i_2\cdots i_m}:(i_2,i_3,\cdots,i_3)\neq(i_1,i_1,\cdots,i_1)\}$, then $\overline{a}_{i_1l_1\cdots i_1}\geq\underline{a}_{i_1i_2'\cdots i_m'}$.
\begin{align*}
\underline{a}_{i_1i_1\cdots i_1}-\overline{a}_{i_1l_2\cdots l_m}&\geq\underline{a}_{i_1i_2\cdots i_m}-\overline{a}_{i_1i_2'\cdots i_m'}\\
&\geq\max\left\{0,\sum\limits_{\substack{(k_2,k_3,\cdots,k_m)\neq(i_1,i_1,\cdots,i_1)\\
      (k_2,k_3,\cdots,k_m)\neq(i_2',i_3',\cdots,i_m')}}(\overline{a}_{i_1i'_2\cdots i'_m}-\underline{a}_{i_1k_2\cdots k_m})\right\}\\
&\geq\max\left\{0,\sum\limits_{\substack{(k_2,k_3,\cdots,k_m)\neq(i_1,i_1,\cdots,i_1)\\
      (k_2,k_3,\cdots,k_m)\neq(l_2,l_3,\cdots,l_m)}}(\overline{a}_{i_1l_2\cdots l_m}-\underline{a}_{i_1k_2\cdots k_m})\right\}.
\end{align*}
The second inequality holds because $(b_1)$ of Theorem \ref{th2} holds for $(i_1,i_2',\cdots,i_m')$. Then $(b_1)$ of Theorem \ref{th2} holds when $(i_1,l_2,\cdots, l_m)\in K$. The proofs of conditions $(b_2)$ and $(c)$ of Theorem \ref{th2} are analogous.
\end{proof}

It is clear that interval double B-tensors are a generalization of interval B-tensors, but the converse is not necessarily true. Then we have the following inclusion relationship.

\begin{proposition}\label{p12}
If $\mathcal{A}^I$ is an interval double B-tensor in $T_{m,n}$, then $\mathcal{A}^I$ is an interval B-tensor.
\end{proposition}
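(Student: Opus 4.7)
The plan is to argue by contradiction, combining the dichotomy in Proposition \ref{p10} with a perturbation argument inside the interval. Assume that $\mathcal{A}^I=[\underline{\mathcal{A}},\overline{\mathcal{A}}]$ is an interval double B-tensor but not an interval B-tensor. Then there exists $\mathcal{A}_0\in\mathcal{A}^I$ that is a double B-tensor yet fails to be a B-tensor. By Proposition \ref{p10}, there is a unique index $j_1\in[n]$ at which condition (b) of Definition \ref{def-dbt} is attained with equality, i.e.,
$a_{0,j_1 j_1\cdots j_1}-\gamma_{j_1}^+(\mathcal{A}_0)=\sum_{(l_2,\ldots,l_m)\neq(j_1,\ldots,j_1)}\bigl(\gamma_{j_1}^+(\mathcal{A}_0)-a_{0,j_1 l_2\cdots l_m}\bigr)=:L_{j_1}(\mathcal{A}_0),$
while strict inequality persists at every other row.

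First I would argue that $L_{j_1}(\mathcal{A}_0)>0$: if the right-hand side were zero, the equality would force $a_{0,j_1\cdots j_1}=\gamma_{j_1}^+(\mathcal{A}_0)$, contradicting condition (a) of Definition \ref{def-dbt}. Using this positivity together with condition (c) of Definition \ref{def-dbt} applied to every pair $(i_1,j_1)$ with $i_1\neq j_1$, I would deduce that $a_{0,i_1\cdots i_1}-\gamma_{i_1}^+(\mathcal{A}_0)>L_{i_1}(\mathcal{A}_0)$ for every $i_1\neq j_1$; so $\mathcal{A}_0$ is \emph{one row away} from being a B-tensor, the only deficiency being the equality at row $j_1$.

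The core of the argument is then to construct a second tensor $\mathcal{A}_1\in\mathcal{A}^I$ that is not a double B-tensor, contradicting the hypothesis on $\mathcal{A}^I$. The key observation is that any admissible perturbation of row $j_1$ breaks the delicate equality into a strict violation of (b): decreasing $a_{0,j_1\cdots j_1}$ toward $\underline{a}_{j_1\cdots j_1}$ lowers the left-hand side of (b) while leaving the right-hand side unchanged; decreasing a non-maximal off-diagonal entry toward its lower bound enlarges the right-hand side but does not change the left; and increasing a maximal off-diagonal entry past $\gamma_{j_1}^+(\mathcal{A}_0)$ simultaneously lowers the left-hand side by $\epsilon$ and raises the right-hand side by $(n^{m-1}-2)\epsilon$. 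A short calculation in each subcase confirms that the perturbed tensor violates (b) of Definition \ref{def-dbt} and hence lies outside the class of double B-tensors.

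The main obstacle will be the degenerate situation in which the entire $j_1$-th row of $\mathcal{A}_0$ already coincides with the corresponding rows of both $\underline{\mathcal{A}}$ and $\overline{\mathcal{A}}$, so that no within-interval perturbation can alter row $j_1$. I would handle this by perturbing a different row $i_1\neq j_1$ instead: using the strict slack $a_{0,i_1\cdots i_1}-\gamma_{i_1}^+(\mathcal{A}_0)>L_{i_1}(\mathcal{A}_0)$ established above, together with the available interval width on row $i_1$, one can produce an admissible modification whose row $i_1$ also attains equality in (b). Then condition (c) of Definition \ref{def-dbt} for the pair $(i_1,j_1)$ forces $L_{i_1}(\mathcal{A}_1)L_{j_1}(\mathcal{A}_1)>L_{i_1}(\mathcal{A}_1)L_{j_1}(\mathcal{A}_1)$, which is impossible and yields the required contradiction, thereby completing the proof.
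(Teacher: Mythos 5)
The paper states Proposition \ref{p12} without any proof, so there is nothing to compare your argument against; more importantly, the ``obstacle'' you flag at the end is not a removable technicality but exactly the place where the printed claim breaks down. Your strategy (use Proposition \ref{p10} to isolate the unique equality row $j_1$, then perturb inside the interval to turn that equality into a strict violation of condition (b) of Definition \ref{def-dbt}) needs room to move in row $j_1$, and your fallback for the degenerate row needs enough width in some other row $i_1$ to drive it all the way down to equality; neither is guaranteed by the hypotheses. Concretely, take $n=2$ and any order $m$, with $a_{11\cdots1}=1$, one off-diagonal entry of row $1$ equal to $-1$ and the remaining off-diagonal entries of row $1$ equal to $0$, and $a_{22\cdots2}=2$ with all other entries of row $2$ equal to $0$. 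This tensor satisfies Definition \ref{def-dbt} (with equality in (b) at row $1$) but is not a B-tensor, so the degenerate interval $[\mathcal{A},\mathcal{A}]$ is an interval double B-tensor that is not an interval B-tensor. The counterexample is robust: keep row $1$ of the interval degenerate and give the remaining rows widths small compared with their strict slack; every member is still a double B-tensor, yet no member is a B-tensor. In that regime your fallback move (bringing row $i_1$ to equality so that condition (c) yields $L_{i_1}L_{j_1}>L_{i_1}L_{j_1}$) is impossible, because the slack in row $i_1$ exceeds the available interval width.

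This is consistent with the rest of the paper: the sentence immediately preceding Proposition \ref{p12} says this converse ``is not necessarily true'', Proposition \ref{p13}(b) explicitly describes interval double B-tensors that fail to be interval B-tensors (the robust counterexample above realizes exactly that case, with Corollary \ref{c5} failing only in row $j_1$), and Proposition \ref{p20} establishes the equivalence only under the circulant hypothesis. So the statement as printed is almost certainly a misprint for the reverse (and trivial) inclusion, which follows from Proposition \ref{p8} applied to each member of the interval; no blind proof of the printed direction can succeed, and the gap in yours sits precisely where the counterexamples live.
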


A natural question that arises is: when is an interval double B-tensor actually a strict interval B-tensor? The following proposition provides a clear characterization, revealing its connection with the ``critical" case involving a single row.

\begin{proposition}\label{p13}
If $\mathcal{A}^I$ is an interval double B-tensor in $T_{m,n}$, then precisely one of the following is satisfied.
\begin{itemize}
  \item [(a)]  $\mathcal{A}^I$ is an interval B-tensor, or
  \item [(b)]  there is a unique $j_1\in[n]$ such that
  $$\sum\limits_{j_2,j_3,\cdots,j_m=1}^n\underline{a}_{j_1j_2\cdots j_m}\leq0,$$
  or there is $(l_2,l_3,\cdots,l_m)\neq(j_1,j_1,\cdots,j_1)$ such that
$$\underline{a}_{j_1j_1\cdots j_1}-\overline{a}_{j_1l_2\cdots l_m}=\sum\limits_{\substack{(j_2,j_3,\cdots,j_m)\neq(j_1,j_1,\cdots,j_1)\\
      (j_2,j_3,\cdots,j_m)\neq(l_2,l_3,\cdots,l_m)}}(\overline{a}_{j_1l_2\cdots l_m}-\underline{a}_{j_1j_2\cdots j_m}),$$
      and for all $i_1\in[n]\setminus\{j_1\}$,
$$\sum\limits_{i_2,i_3,\cdots,i_m=1}^n\underline{a}_{i_1i_2\cdots i_m}>0,$$
  or there is $(l_2,l_3,\cdots,l_m)\neq(i_1,i_1,\cdots,i_1)$ such that
$$\underline{a}_{i_1i_1\cdots i_1}-\overline{a}_{i_1l_2\cdots l_m}>\sum\limits_{\substack{(j_2,j_3,\cdots,j_m)\neq(i_1,i_1,\cdots,i_1)\\
      (j_2,j_3,\cdots,j_m)\neq(l_2,l_3,\cdots,l_m)}}(\overline{a}_{i_1l_2\cdots l_m}-\underline{a}_{i_1j_2\cdots j_m}),$$
\end{itemize}
\end{proposition}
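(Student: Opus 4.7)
The plan is to proceed by case analysis on whether $\mathcal{A}^I$ is already an interval B-tensor. If it is, conclusion (a) holds and we are done, so assume it is not. By Corollary \ref{c5}, ``not being an interval B-tensor'' means there is at least one row $j_1 \in [n]$ for which either the row-sum condition $\sum \underline{a}_{j_1 i_2 \cdots i_m} > 0$ fails, or the strict inequality in Corollary \ref{c5}(b) fails for some off-diagonal index $(l_2,\ldots,l_m)$. I will call such a row \emph{critical}. The goal is to show that critical rows exist uniquely, that at a critical row the failure is precisely an equality of the form stated, and that every non-critical row satisfies the strict B-tensor conditions.

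First, I would show that any failure at a critical row must be an equality. Since $\mathcal{A}^I$ is an interval double B-tensor, Theorem \ref{th2}(b$_2$) gives $\underline{a}_{j_1 j_1 \cdots j_1} \geq -\sum_{(k_2,\ldots)\neq(j_1,\ldots)} \underline{a}_{j_1 k_2 \cdots k_m}$, which rearranges to $\sum_{i_2,\ldots,i_m} \underline{a}_{j_1 i_2 \cdots i_m} \geq 0$; hence failure of Corollary \ref{c5}(a) at $j_1$ can only mean the row-sum equals zero, in which case $\underline{a}_{j_1 j_1 \cdots j_1} = -\sum_{\text{off-diag}} \underline{a}_{j_1 k_2 \cdots k_m}$. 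Analogously, Theorem \ref{th2}(b$_1$) supplies the $\geq$ version of the inequality in Corollary \ref{c5}(b), so failure of strictness at an index $(l_2,\ldots,l_m)$ is forced to be the equality claimed in the statement.

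Next, I would prove uniqueness of the critical row using condition (c) of Theorem \ref{th2}. Suppose, for contradiction, that $i_1 \neq j_1$ are both critical. I would split into three subcases according to the tightness type of each row: if both rows are tight in the (b$_1$)-style inequality at some indices, invoke (c$_1$); if one is (b$_1$)-tight and the other (b$_2$)-tight (row-sum zero), invoke (c$_2$); if both are (b$_2$)-tight, invoke (c$_3$). In every case, substituting the equality relations coming from tightness identifies the left-hand side of the chosen (c) inequality with the product of the $\max\{0,\cdot\}$ factors on its right-hand side; Theorem \ref{th2}(a) ensures all relevant diagonal entries and differences $\underline{a}_{i_1 i_1 \cdots i_1} - \overline{a}_{i_1 l_2 \cdots l_m}$ are strictly positive, so each maximum is attained by its non-zero argument and no sign ambiguity arises. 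This produces $\text{LHS} = \text{RHS}$, contradicting the strict $>$ in (c). Hence there is at most one critical row, and for every $i_1 \in [n]\setminus\{j_1\}$ both conditions of Corollary \ref{c5} hold at row $i_1$, yielding the strict row-sum positivity and the strict inequality for every off-diagonal index.

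The main obstacle will be the three-way case split in the uniqueness argument: one must carefully pair each row's tightness type with the correct branch (c$_1$), (c$_2$), or (c$_3$) of Theorem \ref{th2}, and use Theorem \ref{th2}(a) to verify that the $\max\{0,\cdot\}$ expressions collapse to their interior arguments, so that the strict product inequality actually degenerates to an equality. Once this bookkeeping is in place, the conclusion follows directly from the characterization supplied by Corollary \ref{c5}.
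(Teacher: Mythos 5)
Your proposal is correct and follows essentially the same route as the paper: characterize failure via Corollary \ref{c5}, use Theorem \ref{th2}(b$_1$)--(b$_2$) to force any failure to be an equality, and rule out two critical rows by matching the three tightness combinations to conditions (c$_1$), (c$_2$), (c$_3$) of Theorem \ref{th2} to contradict the strict inequalities. The only difference is presentational: you make explicit the step (implicit in the paper) that the $\max\{0,\cdot\}$ factors collapse to their nonzero arguments because of Theorem \ref{th2}(a).
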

\begin{proof}
If $(a)$ holds, that is $\mathcal{A}^I$ is an interval B-tensor. By Corollary \ref{c5}, for all $i_1\in[n]$,
$$\sum\limits_{i_2,i_3,\cdots,i_m=1}^n\underline{a}_{i_1i_2\cdots i_m}>0$$
and for all $(l_2,l_3,\cdots,l_m)\neq(i_1,i_1,\cdots,i_1)$
$$\underline{a}_{i_1i_1\cdots i_1}-\overline{a}_{i_1l_2\cdots l_m}>\sum\limits_{\substack{(i_2,i_3,\cdots,i_m)\neq(i_1,i_1,\cdots,i_1)\\
      (i_2,i_3,\cdots,i_m)\neq(j_2,j_3,\cdots,j_m)}}(\overline{a}_{i_1l_2\cdots l_m}-\underline{a}_{i_1i_2\cdots i_m}).$$
Hence, condition $(b)$ does not holds.

Since $\mathcal{A}^I$ is an interval double B-tensor, then for all $i_1\in[n]$
\begin{align*}
\underline{a}_{i_1i_1\cdots i_1}>\max\{0,\overline{a}_{i_1i_2\cdots i_m}:(i_2,i_3,\cdots,i_m)\neq(i_1,i_1,\cdots,i_1)\}\geq0.
\end{align*}

Suppose $\mathcal{A}^I$ is not an interval B-tensor. Then there exists at least one row that dose not satisfy the conditions of Corollary \ref{c5}.
If there exist two rows $j_1$ and $j_1'$ that do not satisfy the conditions, $j_1\neq j_1'$, then there are the following three cases.

\textbf{Case $1$.} Suppose $\sum\limits_{j_2,j_3,\cdots,j_m=1}^n\underline{a}_{j_1j_2\cdots j_m}\leq0,$
and
$\sum\limits_{j_2,j_3,\cdots,j_m=1}^n\underline{a}_{j_1'j_2\cdots j_m}\leq0.$
Then $$\underline{a}_{j_1j_1\cdots j_1}\leq -\sum\limits_{(j_2,j_3,\cdots,j_m)\neq (j_1,j_1,\cdots,j_1)}\underline{a}_{j_1j_2\cdots j_m},$$
 and
 $$\underline{a}_{j_1'j_1'\cdots j_1'}\leq -\sum\limits_{(j_2,j_3,\cdots,j_m)\neq (j_1',j_1',\cdots,j_1')}\underline{a}_{j_1'j_2\cdots j_m}.$$
Therefore
\begin{align*}
&\underline{a}_{j_1j_1\cdots j_1}\cdot\underline{a}_{j_1'j_1'\cdots j_1'}\leq\left(-\sum\limits_{(j_2,j_3,\cdots,j_m)\neq(j_1,j_1,\cdots,j_1)}\underline{a}_{j_1j_2\cdots j_m}\right)\cdot\left(-\sum\limits_{(j_2,j_3,\cdots,j_m)\neq(j_1',j_1',\cdots,j_1')}\underline{a}_{j_1'j_2\cdots j_m}\right)\\
&=\left(\max\left\{0,-\sum\limits_{(j_2,j_3,\cdots,j_m)\neq(j_1,j_1,\cdots,j_1)}\underline{a}_{j_1j_2\cdots j_m}\right\}\right)\cdot\left(\max\left\{0,-\sum\limits_{(j_2,j_3,\cdots,j_m)\neq(j_1',j_1',\cdots,j_1')}\underline{a}_{j_1'j_2\cdots j_m}\right\}\right).
\end{align*}
In this case, condition $(c_3)$ in Theorem \ref{th2} is not satisfied, which contradicts the fact that $\mathcal{A}^I$ is an interval double B-tensor.

\textbf{Case $2$.} Suppose there is $(l_2,l_3,\cdots,l_m)\neq(j_1,j_1,\cdots,j_1)$ such that
$$\underline{a}_{j_1j_1\cdots j_1}-\overline{a}_{j_1l_2\cdots l_m}=\sum\limits_{\substack{(j_2,j_3,\cdots,j_m)\neq(j_1,j_1,\cdots,j_1)\\
      (j_2,j_3,\cdots,j_m)\neq(l_2,l_3,\cdots,l_m)}}(\overline{a}_{j_1l_2\cdots l_m}-\underline{a}_{j_1j_2\cdots j_m}),$$
and $\sum\limits_{j_2,j_3,\cdots,j_m=1}^n\underline{a}_{j_1'j_2\cdots j_m}\leq0.$
Then $\underline{a}_{j_1'j_1'\cdots j_1'}\leq -\sum\limits_{(j_2,j_3,\cdots,j_m)\neq (j_1',j_1',\cdots,j_1')}\underline{a}_{j_1'j_2\cdots j_m}.$
Therefore
\begin{align*}
&(\underline{a}_{j_1j_1\cdots j_1}-\overline{a}_{j_1l_2\cdots l_m})\cdot\underline{a}_{j_1'j_1'\cdots j_1'}\\
&\leq\left(\sum\limits_{\substack{(j_2,j_3,\cdots,j_m)\neq(j_1,j_1,\cdots,j_1)\\
      (j_2,j_3,\cdots,j_m)\neq(l_2,l_3,\cdots,l_m)}}(\overline{a}_{j_1l_2\cdots l_m}-\underline{a}_{j_1j_2\cdots j_m})\right)\cdot\left(-\sum\limits_{(j_2,j_3,\cdots,j_m)\neq(j_1',j_1',\cdots,j_1')}\underline{a}_{j_1'j_2\cdots j_m}\right)\\
&=\left(\max\left\{0,\sum\limits_{\substack{(j_2,j_3,\cdots,j_m)\neq(j_1,j_1,\cdots,j_1)\\
      (j_2,j_3,\cdots,j_m)\neq(l_2,l_3,\cdots,l_m)}}(\overline{a}_{j_1l_2\cdots l_m}-\underline{a}_{j_1j_2\cdots j_m})\right\}\right)\cdot\\
&~~~\left(\max\left\{0,-\sum\limits_{(j_2,j_3,\cdots,j_m)\neq(j_1',j_1',\cdots,j_1')}\underline{a}_{j_1'j_2\cdots j_m}\right\}\right).
\end{align*}
In this case, condition $(c_2)$ in Theorem \ref{th2} is not satisfied, which contradicts the fact that $\mathcal{A}^I$ is an interval double B-tensor.

\textbf{Case $3$.} Suppose there is $(l_2,l_3,\cdots,l_m)\neq(j_1,j_1,\cdots,j_1)$ such that
$$\underline{a}_{j_1j_1\cdots j_1}-\overline{a}_{j_1l_2\cdots l_m}=\sum\limits_{\substack{(j_2,j_3,\cdots,j_m)\neq(j_1,j_1,\cdots,j_1)\\
      (j_2,j_3,\cdots,j_m)\neq(l_2,l_3,\cdots,l_m)}}(\overline{a}_{j_1l_2\cdots l_m}-\underline{a}_{j_1j_2\cdots j_m}),$$
and there is $(l_2',l_3',\cdots,l_m')\neq(j_1',j_1',\cdots,j_1')$ such that
$$\underline{a}_{j_1'j_1'\cdots j_1'}-\overline{a}_{j_1'l_2'\cdots l_m'}=\sum\limits_{\substack{(j_2,j_3,\cdots,j_m)\neq(j_1',j_1',\cdots,j_1')\\
      (j_2,j_3,\cdots,j_m)\neq(l_2',l_3',\cdots,l_m')}}(\overline{a}_{j_1'l_2'\cdots l_m'}-\underline{a}_{j_1'j_2\cdots j_m}).$$
Therefore
\begin{align*}
&(\underline{a}_{j_1j_1\cdots j_1}-\overline{a}_{j_1l_2\cdots l_m})\cdot(\underline{a}_{j_1'j_1'\cdots j_1'}-\overline{a}_{j_1'l_2'\cdots l_m'})\\
&=\left(\sum\limits_{\substack{(j_2,j_3,\cdots,j_m)\neq(j_1,j_1,\cdots,j_1)\\
      (j_2,j_3,\cdots,j_m)\neq(l_2,l_3,\cdots,l_m)}}(\overline{a}_{j_1l_2\cdots l_m}-\underline{a}_{j_1j_2\cdots j_m})\right)\cdot\left(\sum\limits_{\substack{(j_2,j_3,\cdots,j_m)\neq(j_1',j_1',\cdots,j_1')\\
      (j_2,j_3,\cdots,j_m)\neq(l_2',l_3',\cdots,l_m')}}(\overline{a}_{j_1'l_2'\cdots l_m'}-\underline{a}_{j_1'j_2\cdots j_m})\right)\\
&=\left(\max\left\{0,\sum\limits_{\substack{(j_2,j_3,\cdots,j_m)\neq(j_1,j_1,\cdots,j_1)\\
      (j_2,j_3,\cdots,j_m)\neq(l_2,l_3,\cdots,l_m)}}(\overline{a}_{j_1l_2\cdots l_m}-\underline{a}_{j_1j_2\cdots j_m})\right\}\right)\cdot\\
&~~~\left(\max\left\{0,\sum\limits_{\substack{(j_2,j_3,\cdots,j_m)\neq(j_1',j_1',\cdots,j_1')\\
      (j_2,j_3,\cdots,j_m)\neq(l_2',l_3',\cdots,l_m')}}(\overline{a}_{j_1'l_2'\cdots l_m'}-\underline{a}_{j_1'j_2\cdots j_m})\right\}\right).
\end{align*}
In this case, condition $(c_1)$ in Theorem \ref{th2} is not satisfied, which contradicts the fact that $\mathcal{A}^I$ is an interval double B-tensor.

In sum up, there exists only one row that dose not satisfy the conditions in Corollary \ref{c5}.
\end{proof}

To facilitate verification, we further provide some necessary or sufficient conditions that require checking only a finite number of specific tensors.

\begin{proposition}\label{p14}
Let $\mathcal{A}^I$ be an interval tensor in $T_{m,n}$. For each $i_1\in[n]$, denote $\overline{a}_{i_1k_2^{i_1}\cdots k_m^{i_1}}:=\max\{\overline{a}_{i_1i_2\cdots i_m}:(i_2,i_3,\cdots,i_m)\neq (i_1,i_1,\cdots,i_1)\}$, define $i^{\mathcal{A}_{\max}}=(a_{l_1l_2\cdots l_m})\in T_{m,n}$ as
$$a_{l_1l_2\cdots l_m}=\begin{cases}
\overline{a}_{l_1k_2^{l_1}\cdots k_m^{l_1}},\mbox{ if }l_1\neq i_1\mbox{ and } (l_2,l_3,\cdots,l_m)=(k_2^{l_1},k_3^{l_1},\cdots,k_m^{l_1}),\\
\underline{a}_{l_1l_2\cdots l_m},\mbox{ otherwise}.
\end{cases}$$
$\mathcal{A}^I$ is an interval double B-tensor only if for all $i\in[n]$, $i^{\mathcal{A}_{\max}}$ and $\underline{\mathcal{A}}$ are double B-tensors.
\end{proposition}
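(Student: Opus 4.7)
The plan is to invoke the defining property of an interval double B-tensor directly: if $\mathcal{A}^I$ is such a tensor, then every $\mathcal{A}\in\mathcal{A}^I$ is a double B-tensor. Thus the proof reduces to checking that the $n+1$ tensors in question, namely $\underline{\mathcal{A}}$ and $i^{\mathcal{A}_{\max}}$ for each $i\in[n]$, all lie inside the interval $\mathcal{A}^I$.

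For $\underline{\mathcal{A}}$ the membership is immediate, since $\underline{\mathcal{A}}\leq\underline{\mathcal{A}}\leq\overline{\mathcal{A}}$ componentwise, so $\underline{\mathcal{A}}$ must be a double B-tensor. For each fixed $i\in[n]$, I would then verify that every entry of $i^{\mathcal{A}_{\max}}$ lies in $[\underline{a}_{l_1l_2\cdots l_m},\overline{a}_{l_1l_2\cdots l_m}]$. By construction, each entry is either $\underline{a}_{l_1l_2\cdots l_m}$, which trivially satisfies the bound, or $\overline{a}_{l_1k_2^{l_1}\cdots k_m^{l_1}}$; in the latter case the entry index is itself $(l_1,k_2^{l_1},\ldots,k_m^{l_1})$, so the assigned value coincides with the upper bound at the same position, and the componentwise constraint holds. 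Hence $\underline{\mathcal{A}}\leq i^{\mathcal{A}_{\max}}\leq\overline{\mathcal{A}}$, and $i^{\mathcal{A}_{\max}}\in\mathcal{A}^I$ is forced to be a double B-tensor.

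There is essentially no serious obstacle: the argument is a pure set-membership verification. The only point that needs care is to confirm that the upper-bound value is substituted at exactly the index corresponding to the row-maximum $\overline{a}_{l_1k_2^{l_1}\cdots k_m^{l_1}}$, so that the entrywise comparison between $i^{\mathcal{A}_{\max}}$ and $\overline{\mathcal{A}}$ matches up at the same position; once that is clear, the conclusion follows immediately from the definition of an interval double B-tensor, with no need to invoke the detailed inequalities of Theorem \ref{th2}.
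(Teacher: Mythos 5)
Your proposal is correct and is essentially identical to the paper's proof: the paper also simply observes that $\underline{\mathcal{A}}\in\mathcal{A}^I$ and $i^{\mathcal{A}_{\max}}\in\mathcal{A}^I$ for each $i\in[n]$, so by the definition of an interval double B-tensor they must all be double B-tensors. Your extra componentwise check that $\underline{\mathcal{A}}\leq i^{\mathcal{A}_{\max}}\leq\overline{\mathcal{A}}$ is a harmless elaboration of the same one-line argument.
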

\begin{proof}
Since $\mathcal{A}^I$ is an interval double B-tensor, and for all $i\in[n]$, $i_1^{\mathcal{A}_{\max}}\in\mathcal{A}^I$ and $\underline{\mathcal{A}}\in\mathcal{A}^I$, then they are double B-tensors.
\end{proof}

\begin{proposition}\label{p15}
Let $\mathcal{A}^I$ be an interval tensor in $T_{m,n}$. $\mathcal{A}^I$ is an interval double B-tensor only if
\begin{itemize}
  \item [(a)] For each $i_1\in [n]$, $\underline{a}_{i_1i_1\cdots i_1}>\max\{0,\overline{a}_{i_1i_2\cdots i_m}:(i_2,i_3,\cdots,i_m)\neq(i_1,i_1,\cdots,i_1)\}$;
  \item [(b)] For all $i_1\in [n]$, and $\overline{a}_{i_1k_2^{i_1}\cdots k_m^{i_1}}:=\max\{\overline{a}_{i_1i_2\cdots i_m}:(i_2,i_3,\cdots,i_m)\neq (i_1,i_1,\cdots,i_1)\}$:
  \begin{itemize}
  \item [(b$_1$)] if $\overline{a}_{i_1k_2^{i_1}\cdots k_m^{i_1}}>0$, then
  $\underline{a}_{i_1i_1\cdots i_1}-\overline{a}_{i_1k_2^{i_1}\cdots k_m^{i_1}}\geq
      \sum\limits_{\substack{(k_2,k_3,\cdots,k_m)\neq(i_1,i_1,\cdots,i_1)\\
      (k_2,k_3,\cdots,k_m)\neq(k_2^{i_1},k_3^{i_1},\cdots, k_m^{i_1})}}(\overline{a}_{i_1k_2^{i_1}\cdots k_m^{i_1}}-\underline{a}_{i_1k_2\cdots k_m})$;
  \item [(b$_2$)] if $\overline{a}_{i_1k_2^{i_1}\cdots k_m^{i_1}}\leq0$, then
  $\underline{a}_{i_1i_1\cdots i_1}\geq\left(-\sum\limits_{(k_2,k_3,\cdots,k_m)\neq(i_1,i_1,\cdots,i_1)}\underline{a}_{i_1k_2\cdots k_m}\right)$;
  \end{itemize}
  \item [(c)] For all $i_1,j_1\in [n]$, $i_1\neq j_1$, $\overline{a}_{i_1k_2^{i_1}\cdots k_m^{i_1}}:=\max\{\overline{a}_{i_1i_2\cdots i_m}:(i_2,i_3,\cdots,i_m)\neq (i_1,i_1,\cdots,i_1)\}$, and $\overline{a}_{j_1k_2^{j_1}\cdots k_m^{j_1}}:=\max\{\overline{a}_{j_1j_2\cdots j_m}:(j_2,j_3,\cdots,j_m)\neq (j_1,j_1,\cdots,j_1)\}$:
  \begin{itemize}
  \item [(c$_1$)] if $\overline{a}_{i_1k_2^{i_1}\cdots k_m^{i_1}}>0$ and $\overline{a}_{j_1k_2^{j_1}\cdots k_m^{j_1}}>0$, then
  \begin{align*}
  &(\underline{a}_{i_1i_1\cdots i_1}-\overline{a}_{i_1k_2^{i_1}\cdots k_m^{i_1}})\cdot(\underline{a}_{j_1j_1\cdots j_1}-\overline{a}_{j_1k_2^{j_1}\cdots k_m^{j_1}})\\
  &>\left(\sum\limits_{\substack{(k_2,k_3,\cdots,k_m)\neq(i_1,i_1,\cdots,i_1)\\
      (k_2,k_3,\cdots,k_m)\neq(k_2^{i_1},k_3^{i_1},\cdots, k_m^{i_1})}}(\overline{a}_{i_1k_2^{i_1}\cdots k_m^{i_1}}-\underline{a}_{i_1k_2\cdots k_m})\right)\cdot\\
      &~~~\left(\sum\limits_{\substack{(k_2,k_3,\cdots,k_m)\neq(j_1,j_1,\cdots,j_1)\\
      (k_2,k_3,\cdots,k_m)\neq(k_2^{j_1},k_3^{j_1},\cdots, k_m^{j_1})}}(\overline{a}_{j_1k_2^{j_1}\cdots k_m^{j_1}}-\underline{a}_{j_1k_2\cdots k_m})\right);
      \end{align*}
  \item [(c$_2$)] if $\overline{a}_{i_1k_2^{i_1}\cdots k_m^{i_1}}>0$ and $\overline{a}_{j_1k_2^{j_1}\cdots k_m^{j_1}}\leq0$, then
  \begin{align*}
  &(\underline{a}_{i_1i_1\cdots i_1}-\overline{a}_{i_1k_2^{i_1}\cdots k_m^{i_1}})\cdot\underline{a}_{j_1j_1\cdots j_1}\\
  &>\left(\sum\limits_{\substack{(k_2,k_3,\cdots,k_m)\neq(i_1,i_1,\cdots,i_1)\\
      (k_2,k_3,\cdots,k_m)\neq(k_2^{i_1},k_3^{i_1},\cdots, k_m^{i_1})}}(\overline{a}_{i_1k_2^{i_1}\cdots k_m^{i_1}}-\underline{a}_{i_1k_2\cdots k_m})\right)\cdot\left(-\sum\limits_{(k_2,k_3,\cdots,k_m)\neq(j_1,j_1,\cdots,j_1)}\underline{a}_{j_1k_2\cdots k_m}\right);
  \end{align*}
  \item [(c$_3$)] if $\overline{a}_{i_1k_2^{i_1}\cdots k_m^{i_1}}\leq0$ and $\overline{a}_{j_1k_2^{j_1}\cdots k_m^{j_1}}\leq0$, then
  \begin{align*}
  \underline{a}_{i_1i_1\cdots i_1}\cdot\underline{a}_{j_1j_1\cdots j_1}>\left(-\sum\limits_{(k_2,k_3,\cdots,k_m)\neq(i_1,i_1,\cdots,i_1)}\underline{a}_{i_1k_2\cdots k_m}\right)\cdot
      \left(-\sum\limits_{(k_2,k_3,\cdots,k_m)\neq(j_1,j_1,\cdots,j_1)}\underline{a}_{j_1k_2\cdots k_m}\right).
  \end{align*}
      \end{itemize}
\end{itemize}
\end{proposition}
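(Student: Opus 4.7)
The plan is to derive Proposition \ref{p15} as a direct specialization of Theorem \ref{th2}: since $\mathcal{A}^I$ is assumed to be an interval double B-tensor, conditions (a), (b$_1$), (b$_2$), (c$_1$), (c$_2$), (c$_3$) of Theorem \ref{th2} hold for all choices of free off-diagonal indices. The strategy is, for each row $i_1$, to plug in the distinguished tuple $(k_2^{i_1},\ldots,k_m^{i_1})$ that attains the row-wise maximum $\overline{a}_{i_1k_2^{i_1}\cdots k_m^{i_1}}$, and then observe that under the appropriate sign hypothesis the outer $\max\{0,\cdot\}$ in Theorem \ref{th2} collapses to the inner sum, producing the compact inequalities of Proposition \ref{p15}.

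Item (a) of Proposition \ref{p15} is immediate, being identical to item (a) of Theorem \ref{th2}. For item (b$_1$), I would apply condition (b$_1$) of Theorem \ref{th2} with $(i_2,\ldots,i_m)=(k_2^{i_1},\ldots,k_m^{i_1})$; when $\overline{a}_{i_1k_2^{i_1}\cdots k_m^{i_1}}>0$, the maximality forces $\overline{a}_{i_1k_2^{i_1}\cdots k_m^{i_1}}\geq \overline{a}_{i_1k_2\cdots k_m}\geq \underline{a}_{i_1k_2\cdots k_m}$ for every admissible off-diagonal tuple, so each summand is non-negative and the $\max\{0,\cdot\}$ equals the sum itself. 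For item (b$_2$), when $\overline{a}_{i_1k_2^{i_1}\cdots k_m^{i_1}}\leq 0$, every off-diagonal entry $\underline{a}_{i_1k_2\cdots k_m}$ is non-positive, hence $-\sum \underline{a}_{i_1k_2\cdots k_m}\geq 0$, and condition (b$_2$) of Theorem \ref{th2} immediately yields the claim.

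For item (c) the approach is entirely parallel, applied pairwise to rows $i_1\neq j_1$ using conditions (c$_1$)--(c$_3$) of Theorem \ref{th2}. In case (c$_1$), both $\overline{a}_{i_1k_2^{i_1}\cdots k_m^{i_1}}$ and $\overline{a}_{j_1k_2^{j_1}\cdots k_m^{j_1}}$ are positive, so plugging the two maximizing tuples simultaneously into (c$_1$) of Theorem \ref{th2} collapses both outer maxima into sums by the argument used for (b$_1$). Case (c$_2$) is the mixed asymmetric case: applying (c$_2$) of Theorem \ref{th2} with the maximizing tuple on the positive-max side handles the first factor as in (b$_1$), while the second factor is handled as in (b$_2$) since the non-positive maximum forces all relevant $\underline{a}$ entries to be non-positive. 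In case (c$_3$), both rows have only non-positive off-diagonals and (c$_3$) of Theorem \ref{th2} applies directly, with both outer maxima simplifying to the corresponding $-\sum$ expressions.

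The entire argument is substitution plus sign analysis, and the only bookkeeping obstacle is keeping track, in each case, of exactly which sign hypothesis on $\overline{a}_{i_1k_2^{i_1}\cdots k_m^{i_1}}$ (and its $j_1$ counterpart) is precisely what is needed to remove the outer $\max\{0,\cdot\}$ without losing information. No auxiliary tensor in $\mathcal{A}^I$ needs to be constructed here, since Theorem \ref{th2} already encodes the full necessary-and-sufficient condition; consequently the proof is short once the case split by the sign of the row maxima is tabulated.
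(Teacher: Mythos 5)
Your proposal is correct and matches the paper's intent: Proposition \ref{p15} is obtained by evaluating the conditions of Theorem \ref{th2} at the row-wise maximizing tuple $(k_2^{i_1},\ldots,k_m^{i_1})$, with the maximality guaranteeing each summand $\overline{a}_{i_1k_2^{i_1}\cdots k_m^{i_1}}-\underline{a}_{i_1k_2\cdots k_m}\geq 0$ (and the sign hypotheses guaranteeing $-\sum\underline{a}_{i_1k_2\cdots k_m}\geq 0$), so that every $\max\{0,\cdot\}$ collapses — which is genuinely needed for the product conditions in $(c)$, where dropping a $\max$ inside a product is not automatic. The paper's own (omitted) argument does the same specialization, merely re-deriving it from Definition \ref{def-dbt} via the auxiliary tensors used in the necessity part of Theorem \ref{th2} rather than citing the theorem as a black box.
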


\begin{proposition}\label{p16}
Let $\mathcal{A}^I$ be an interval tensor in $T_{m,n}$, $n\geq3$. For each $i_1\in[n]$, there is $(k_2^{i_1},k_3^{i_1},\cdots,k_m^{i_1})\neq (i_1,i_1,\cdots,i_1)$, for all $(i_2,i_3,\cdots,i_m)\neq(i_1,i_1,\cdots,i_1)$ and $(i_2,i_3,\cdots,i_m)\neq(k_2^{i_1},k_3^{i_1},\cdots,k_m^{i_1})$, such that $\overline{a}_{i_1i_2\cdots i_3}\leq \underline{a}_{i_1k_2^{i_1}\cdots k_m^{i_1}}$. Then
$\mathcal{A}^I$ is an interval double B-tensor if and only if $\mathcal{A}^I$ satisfies the necessary conditions in Proposition \textup{\ref{p14}}.
\end{proposition}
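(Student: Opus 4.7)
The necessity direction is immediate: Proposition \ref{p14} asserts precisely that in any interval double B-tensor, each $i^{\mathcal{A}_{\max}}$ and $\underline{\mathcal{A}}$ must itself be a double B-tensor, which is the content of the assumed conditions. So the work lies in sufficiency. Fix an arbitrary $\mathcal{A}\in\mathcal{A}^I$ and verify the three conditions of Definition \ref{def-dbt}. The key simplification provided by the structural hypothesis is the following: writing $k^{i_1}:=(k_2^{i_1},\ldots,k_m^{i_1})$, every off-diagonal entry in row $i_1$ satisfies $a_{i_1 i_2\cdots i_m}\leq \overline{a}_{i_1 i_2\cdots i_m}\leq \underline{a}_{i_1 k^{i_1}}\leq a_{i_1 k^{i_1}}$, so $\gamma_{i_1}^+(\mathcal{A})=\max\{0,a_{i_1 k^{i_1}}\}$ is controlled by a single entry.

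Condition (a) of Definition \ref{def-dbt} is then handled by applying part (a) of that definition to any $i'^{\mathcal{A}_{\max}}$ with $i'\neq i_1$, since its row $i_1$ realizes the maximum $\overline{a}_{i_1 k^{i_1}}\geq a_{i_1 k^{i_1}}$, and $a_{i_1 i_1\cdots i_1}\geq \underline{a}_{i_1 i_1\cdots i_1}$. For condition (b), I would split on the sign of $a_{i_1 k^{i_1}}$: when $a_{i_1 k^{i_1}}>0$, use $i'^{\mathcal{A}_{\max}}$ (any $i'\neq i_1$), whose row $i_1$ has $\gamma^+=\overline{a}_{i_1 k^{i_1}}$; when $a_{i_1 k^{i_1}}\leq 0$, use $\underline{\mathcal{A}}$, whose row $i_1$ has $\gamma^+=0$. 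In either case the double-B inequality for the witness tensor implies the required inequality for $\mathcal{A}$, because $a_{i_1 i_1\cdots i_1}\geq\underline{a}_{i_1 i_1\cdots i_1}$ inflates the LHS while $a_{i_1 k^{i_1}}\leq\overline{a}_{i_1 k^{i_1}}$ and $a_{i_1 l_2\cdots l_m}\geq\underline{a}_{i_1 l_2\cdots l_m}$ shrink the RHS.

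Condition (c) requires a three-way sub-analysis on the signs of $a_{i_1 k^{i_1}}$ and $a_{j_1 k^{j_1}}$. When both are positive, pick $i_0\notin\{i_1,j_1\}$ (this is exactly where the hypothesis $n\geq 3$ enters, since only then can a single witness tensor force both rows to their maximal configuration simultaneously) and apply Definition \ref{def-dbt}(c) to $i_0^{\mathcal{A}_{\max}}$. When exactly one is positive, say $a_{i_1 k^{i_1}}>0$ and $a_{j_1 k^{j_1}}\leq 0$, use $j_1^{\mathcal{A}_{\max}}$: its row $i_1$ has $\gamma^+=\overline{a}_{i_1 k^{i_1}}>0$ and its row $j_1$ has $\gamma^+=0$. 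When both are nonpositive, use $\underline{\mathcal{A}}$. In each sub-case the double-B inequality for the witness tensor sits strictly between the LHS and RHS of condition (c) for $\mathcal{A}$, since all relevant factors are nonnegative by the verified condition (a) and the sign hypothesis, and monotonic substitution $a\to\underline{a}$ (on the diagonal LHS and off-diagonal RHS) or $a\to\overline{a}$ (on the $k^{i_1}$-entry RHS and off-diagonal LHS in the subtracted form) yields the desired direction.

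The main obstacle is not a single deep step but the bookkeeping of the case analysis: one must confirm in each sub-case of condition (c) that both factors on both sides are nonnegative so that multiplying the pair of one-sided inequalities preserves the strict ordering inherited from the witness tensor, and one must check that the sign regime of $\gamma_{i_1}^+(\mathcal{A})$ actually matches the sign regime realized by the chosen witness ($\overline{a}_{i_1 k^{i_1}}>0$ holds automatically whenever $a_{i_1 k^{i_1}}>0$, and $\underline{a}_{i_1 k^{i_1}}\leq 0$ holds automatically whenever $a_{i_1 k^{i_1}}\leq 0$, so the witness tensors are always the right ones). The $n\geq 3$ assumption is invoked in exactly one place, the both-positive sub-case of condition (c), and omitting it would leave that sub-case unverified.
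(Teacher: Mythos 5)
Your proposal is correct and follows essentially the same route as the paper: necessity from Proposition \ref{p14}, and sufficiency by noting that the hypothesis forces $\gamma_{i_1}^+(\mathcal{A})=\max\{0,a_{i_1k_2^{i_1}\cdots k_m^{i_1}}\}$, then casing on the sign of this quantity and invoking the double-B inequalities of the witness tensors $i'^{\mathcal{A}_{\max}}$ (with $i'\notin\{i_1,j_1\}$, which is where $n\geq 3$ enters) and $\underline{\mathcal{A}}$. Your identification of $j_1^{\mathcal{A}_{\max}}$ as the witness in the mixed-sign subcase is in fact slightly more precise than the paper's own annotation there.
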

\begin{proof}
``{\bf Necessity.}''  It follows from Proposition \ref{p14}.

``{\bf sufficiency.}'' Based on the assumption, for all $\mathcal{A}\in \mathcal{A}^I$ and $i_1\in[n]$, if $\gamma_{i_1}^+(\mathcal{A})>0$ then $\gamma_{i_1}^+(\mathcal{A})\leq\overline{a}_{i_1k_2^{i_1}\cdots k_m^{i_1}}$ and $\gamma_{i_1}^+(\mathcal{A})=a_{i_1k_2^{i_1}\cdots k_m^{i_1}}$.

Take any $\mathcal{A}\in \mathcal{A}^I$.

As for all $i\in[n]$, $i^{\mathcal{A}_{\max}}\in\mathcal{A}^I$ is a double B-tensor, then $a_{i_1i_1\cdots i_1}\geq \underline{a}_{i_1i_1\cdots i_1}>\max\{0,\overline{a}_{i_1i_2\cdots i_m}:(i_2,i_3,\cdots,i_m)\neq(i_1,i_1,\cdots,i_1)\}\geq\max\{0,a_{i_1i_2\cdots i_m}:(i_2,i_3,\cdots,i_m)\neq(i_1,i_1,\cdots,i_1)\}$ holds for all $i_1\in[n]$. Hence, condition $(a)$ of Definition \ref{def-dbt} is satisfied.

For the verification of condition $(b)$ of Definition \ref{def-dbt}, we distinguish two cases for all $i_1\in[n]$.

\textbf{Case $b_1$.} When $\gamma_{i_1}^+(\mathcal{A})>0$, $\gamma_{i_1}^+(\mathcal{A})=a_{i_1k_2^{i_1}\cdots k_m^{i_1}}>0$. Then
\begin{align*}
a_{i_1i_1\cdots i_1}-\gamma_{i_1}^+(\mathcal{A})&\geq \underline{a}_{i_1i_1\cdots i_1}-\overline{a}_{i_1k_2^{i_1}\cdots k_m^{i_1}}\\
&\geq\sum\limits_{\substack{(k_2,k_3,\cdots,k_m)\neq(i_1,i_1,\cdots,i_1)\\
      (k_2,k_3,\cdots,k_m)\neq(k_2^{i_1},k_3^{i_1},\cdots, k_m^{i_1})}}(\overline{a}_{i_1k_2^{i_1}\cdots k_m^{i_1}}-\underline{a}_{i_1k_2\cdots k_m})\\
&\geq\sum\limits_{\substack{(k_2,k_3,\cdots,k_m)\neq(i_1,i_1,\cdots,i_1)\\
      (k_2,k_3,\cdots,k_m)\neq(k_2^{i_1},k_3^{i_1},\cdots, k_m^{i_1})}}(\gamma_{i_1}^+(\mathcal{A})-a_{i_1k_2\cdots k_m})\\
&=\sum\limits_{(k_2,k_3,\cdots,k_m)\neq(i_1,i_1,\cdots,i_1)}(\gamma_{i_1}^+(\mathcal{A})-a_{i_1k_2\cdots k_m}).
\end{align*}
The second inequality holds because $i^{\mathcal{A}_{\max}}$ for all $i\in[n]\setminus\{ i_1\}$ is a double B-tensor.

\textbf{Case $b_2$.} When $\gamma_{i_1}^+(\mathcal{A})=0$, $0=\gamma_{i_1}^+(\mathcal{A})\geq a_{i_1k_2^{i_1}\cdots k_m^{i_1}}$. Then
\begin{align*}
a_{i_1i_1\cdots i_1}-\gamma_{i_1}^+(\mathcal{A})&\geq \underline{a}_{i_1i_1\cdots i_1}\\
&\geq-\sum\limits_{(k_2,k_3,\cdots,k_m)\neq(i_1,i_1,\cdots,i_1)}\underline{a}_{i_1k_2\cdots k_m}\\
&\geq\sum\limits_{(k_2,k_3,\cdots,k_m)\neq(i_1,i_1,\cdots,i_1)}(\gamma_{i_1}^+(\mathcal{A})-a_{i_1k_2\cdots k_m}).
\end{align*}
The second inequality holds because $i^{\mathcal{A}_{\max}}$ for all $i\in[n]\setminus\{ i_1\}$ is a double B-tensor.

Then condition $(b)$ of Definition \ref{def-dbt} is obtained.

For condition $(c)$ of Definition \ref{def-dbt}, its verification follows a similar logic to that of condition $(b)$ of Definition \ref{def-dbt},  we distinguish there cases for all $i_1,j_1\in[n]$, $i_1\neq j_1$.

\textbf{Case $c_1$.} When $\gamma_{i_1}^+(\mathcal{A})>0$ and $\gamma_{j_1}^+(\mathcal{A})>0$, $\gamma_{i_1}^+(\mathcal{A})=a_{i_1k_2^{i_1}\cdots k_m^{i_1}}>0$ and $\gamma_{j_1}^+(\mathcal{A})=a_{j_1k_2^{j_1}\cdots k_m^{j_1}}>0$. Then
\begin{align*}
&(a_{i_1i_1\cdots i_1}-\gamma_{i_1}^+(\mathcal{A}))\cdot(a_{j_1j_1\cdots j_1}-\gamma_{j_1}^+(\mathcal{A}))\geq (\underline{a}_{i_1i_1\cdots i_1}-\overline{a}_{i_1k_2^{i_1}\cdots k_m^{i_1}})\cdot(\underline{a}_{j_1j_1\cdots j_1}-\overline{a}_{j_1k_2^{j_1}\cdots k_m^{j_1}})\\
&>\left(\sum\limits_{\substack{(k_2,k_3,\cdots,k_m)\neq(i_1,i_1,\cdots,i_1)\\
      (k_2,k_3,\cdots,k_m)\neq(k_2^{i_1},k_3^{i_1},\cdots, k_m^{i_1})}}(\overline{a}_{i_1k_2^{i_1}\cdots k_m^{i_1}}-\underline{a}_{i_1k_2\cdots k_m})\right)\cdot\\
&~~~\left(\sum\limits_{\substack{(k_2,k_3,\cdots,k_m)\neq(j_1,j_1,\cdots,j_1)\\
      (k_2,k_3,\cdots,k_m)\neq(k_2^{j_1},k_3^{j_1},\cdots, k_m^{j_1})}}(\overline{a}_{i_1k_2^{j_1}\cdots k_m^{j_1}}-\underline{a}_{j_1k_2\cdots k_m})\right)\\
&\geq\left(\sum\limits_{\substack{(k_2,k_3,\cdots,k_m)\neq(i_1,i_1,\cdots,i_1)\\
      (k_2,k_3,\cdots,k_m)\neq(k_2^{i_1},k_3^{i_1},\cdots, k_m^{i_1})}}(\gamma_{i_1}^+(\mathcal{A})-a_{i_1k_2\cdots k_m})\right)\cdot\left(\sum\limits_{\substack{(k_2,k_3,\cdots,k_m)\neq(j_1,j_1,\cdots,j_1)\\
      (k_2,k_3,\cdots,k_m)\neq(k_2^{j_1},k_3^{j_1},\cdots, k_m^{j_1})}}(\gamma_{j_1}^+(\mathcal{A})-a_{j_1k_2\cdots k_m})\right)\\
&=\left(\sum\limits_{(k_2,k_3,\cdots,k_m)\neq(i_1,i_1,\cdots,i_1)}(\gamma_{i_1}^+(\mathcal{A})-a_{i_1k_2\cdots k_m})\right)\cdot\left(\sum\limits_{(k_2,k_3,\cdots,k_m)\neq(j_1,j_1,\cdots,j_1)}(\gamma_{j_1}^+(\mathcal{A})-a_{j_1k_2\cdots k_m})\right).
\end{align*}
The second inequality holds because $i^{\mathcal{A}_{\max}}$ for all $i\in[n]\setminus\{ i_1,j_1\}$ is a double B-tensor.

\textbf{Case $c_2$.} When $\gamma_{i_1}^+(\mathcal{A})>0$ and $\gamma_{j_1}^+(\mathcal{A})=0$, $\gamma_{i_1}^+(\mathcal{A})=a_{i_1k_2^{i_1}\cdots k_m^{i_1}}>0$ and $0=\gamma_{j_1}^+(\mathcal{A})\geq a_{j_1k_2^{j_1}\cdots k_m^{j_1}}$. Then
\begin{align*}
&(a_{i_1i_1\cdots i_1}-\gamma_{i_1}^+(\mathcal{A}))\cdot(a_{j_1j_1\cdots j_1}-\gamma_{j_1}^+(\mathcal{A}))\geq (\underline{a}_{i_1i_1\cdots i_1}-\overline{a}_{i_1k_2^{i_1}\cdots k_m^{i_1}})\cdot\underline{a}_{j_1j_1\cdots j_1}\\
&>\left(\sum\limits_{\substack{(k_2,k_3,\cdots,k_m)\neq(i_1,i_1,\cdots,i_1)\\
      (k_2,k_3,\cdots,k_m)\neq(k_2^{i_1},k_3^{i_1},\cdots, k_m^{i_1})}}(\overline{a}_{i_1k_2^{i_1}\cdots k_m^{i_1}}-\underline{a}_{i_1k_2\cdots k_m})\right)\cdot\left(-\sum\limits_{(k_2,k_3,\cdots,k_m)\neq(j_1,j_1,\cdots,j_1)}\underline{a}_{j_1k_2\cdots k_m}\right)\\
&\geq\left(\sum\limits_{\substack{(k_2,k_3,\cdots,k_m)\neq(i_1,i_1,\cdots,i_1)\\
      (k_2,k_3,\cdots,k_m)\neq(k_2^{i_1},k_3^{i_1},\cdots, k_m^{i_1})}}(\gamma_{i_1}^+(\mathcal{A})-a_{i_1k_2\cdots k_m})\right)\cdot\left(\sum\limits_{(k_2,k_3,\cdots,k_m)\neq(j_1,j_1,\cdots,j_1)}(\gamma_{j_1}^+(\mathcal{A})-a_{j_1k_2\cdots k_m})\right)\\
&=\left(\sum\limits_{(k_2,k_3,\cdots,k_m)\neq(i_1,i_1,\cdots,i_1)}(\gamma_{i_1}^+(\mathcal{A})-a_{i_1k_2\cdots k_m})\right)\cdot\left(\sum\limits_{(k_2,k_3,\cdots,k_m)\neq(j_1,j_1,\cdots,j_1)}(\gamma_{j_1}^+(\mathcal{A})-a_{j_1k_2\cdots k_m})\right).
\end{align*}
The second inequality holds because $i^{\mathcal{A}_{\max}}$ for all $i\in[n]\setminus\{ i_1,j_1\}$ is a double B-tensor.

\textbf{Case $c_3$.} When $\gamma_{i_1}^+(\mathcal{A})=\gamma_{j_1}^+(\mathcal{A})=0$, $0=\gamma_{i_1}^+(\mathcal{A})\geq a_{i_1k_2^{i_1}\cdots k_m^{i_1}}>0$ and $0=\gamma_{j_1}^+(\mathcal{A})\geq a_{j_1k_2^{j_1}\cdots k_m^{j_1}}$. Then
\begin{align*}
&(a_{i_1i_1\cdots i_1}-\gamma_{i_1}^+(\mathcal{A}))\cdot(a_{j_1j_1\cdots j_1}-\gamma_{j_1}^+(\mathcal{A}))\geq \underline{a}_{i_1i_1\cdots i_1}\cdot\underline{a}_{j_1j_1\cdots j_1}\\
&>\left(\sum\limits_{(k_2,k_3,\cdots,k_m)\neq(i_1,i_1,\cdots,i_1)}(\overline{a}_{i_1k_2^{i_1}\cdots k_m^{i_1}}-\underline{a}_{i_1k_2\cdots k_m})\right)\cdot\left(-\sum\limits_{(k_2,k_3,\cdots,k_m)\neq(j_1,j_1,\cdots,j_1)}\underline{a}_{j_1k_2\cdots k_m}\right)\\
&\geq\left(\sum\limits_{(k_2,k_3,\cdots,k_m)\neq(i_1,i_1,\cdots,i_1)}(\gamma_{i_1}^+(\mathcal{A})-a_{i_1k_2\cdots k_m})\right)\cdot\left(\sum\limits_{(k_2,k_3,\cdots,k_m)\neq(j_1,j_1,\cdots,j_1)}(\gamma_{j_1}^+(\mathcal{A})-a_{j_1k_2\cdots k_m})\right).
\end{align*}
The second inequality holds because $i^{\mathcal{A}_{\max}}$ for all $i\in[n]\setminus\{ i_1,j_1\}$ is a double B-tensor.

Then condition $(b)$ of Definition \ref{def-dbt} is obtained.

In summary, every $\mathcal{A}\in\mathcal{A}^I$ is a double B-tensor, and thus $\mathcal{A}^I$ is an interval double B-tensor.
\end{proof}

\begin{proposition}\label{p17}
If $\mathcal{A}^I$ be an interval Z tensor in $T_{m,n}$. Then $\mathcal{A}^I$ is an interval double B-tensor if and only if $\underline{\mathcal{A}}$ is a double B-tensor.
\end{proposition}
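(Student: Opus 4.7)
The plan is to reduce both directions to the Z-tensor observation that, since every off-diagonal entry of any $\mathcal{A}\in\mathcal{A}^I$ satisfies $a_{i_1i_2\cdots i_m}\leq 0$, we have $\gamma_{i_1}^+(\mathcal{A})=0$ for every $i_1\in[n]$ and every $\mathcal{A}\in\mathcal{A}^I$. This collapses the three defining inequalities of Definition \ref{def-dbt} to purely entry-wise statements that behave monotonically with respect to the partial order on $\mathcal{A}^I$.

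For the necessity direction, since $\underline{\mathcal{A}}\in\mathcal{A}^I$, it is immediate from the definition of interval double B-tensor (or directly from Proposition \ref{p14}) that $\underline{\mathcal{A}}$ itself is a double B-tensor, so nothing further is required.

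For the sufficiency direction, I assume $\underline{\mathcal{A}}$ is a double B-tensor and pick an arbitrary $\mathcal{A}\in\mathcal{A}^I$. Using $\gamma_{i_1}^+(\underline{\mathcal{A}})=\gamma_{i_1}^+(\mathcal{A})=0$, conditions (a)-(c) of Definition \ref{def-dbt} for $\underline{\mathcal{A}}$ read
$\underline{a}_{i_1i_1\cdots i_1}>0$,
$\underline{a}_{i_1i_1\cdots i_1}\geq\sum_{(i_2,\cdots,i_m)\neq(i_1,\cdots,i_1)}(-\underline{a}_{i_1i_2\cdots i_m})$, and
$\underline{a}_{i_1i_1\cdots i_1}\underline{a}_{j_1j_1\cdots j_1}>\left(\sum(-\underline{a}_{i_1i_2\cdots i_m})\right)\left(\sum(-\underline{a}_{j_1j_2\cdots j_m})\right)$.
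Then I transfer each of these to $\mathcal{A}$ using the two monotone relations $a_{i_1i_1\cdots i_1}\geq\underline{a}_{i_1i_1\cdots i_1}$ on diagonal entries and $-a_{i_1i_2\cdots i_m}\leq -\underline{a}_{i_1i_2\cdots i_m}$ on off-diagonal entries (the second one being the crucial consequence of the Z-tensor assumption). Condition (a) follows immediately; condition (b) follows by a single chain of inequalities; condition (c) is the only place that needs a little care, and it follows from the fact that all four factors on the two sides are nonnegative (the diagonal factors by part (a) for $\underline{\mathcal{A}}$, the off-diagonal sums by the Z-tensor assumption), so the product inequality is preserved under the simultaneous substitution $\underline{a}\mapsto a$ that makes the left-hand side no smaller and the right-hand side no larger.

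The only real obstacle is the book-keeping in step (c): I have to observe that the strict inequality for $\underline{\mathcal{A}}$ survives after replacing both factors on each side by the $\mathcal{A}$-versions, which requires me to verify explicitly that the relevant quantities are all nonnegative before invoking monotonicity of multiplication. Once that is noted, the argument is a short three-line chain, and the conclusion that $\mathcal{A}$ is a double B-tensor for every $\mathcal{A}\in\mathcal{A}^I$ gives that $\mathcal{A}^I$ is an interval double B-tensor.
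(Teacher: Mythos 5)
Your proposal is correct and follows essentially the same route as the paper's proof: both rest on the observation that the Z-tensor property forces $\gamma_{i_1}^+(\mathcal{A})=0$ for every $\mathcal{A}\in\mathcal{A}^I$, reducing conditions (a)--(c) of Definition \ref{def-dbt} to entry-wise inequalities that transfer monotonically from $\underline{\mathcal{A}}$ to any $\mathcal{A}\in\mathcal{A}^I$ via $a_{i_1i_1\cdots i_1}\ge\underline{a}_{i_1i_1\cdots i_1}$ and $-a_{i_1i_2\cdots i_m}\le-\underline{a}_{i_1i_2\cdots i_m}$. Your explicit remark that the nonnegativity of all four factors is needed before multiplying inequalities in step (c) is a point the paper passes over silently, but the argument is the same.
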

\begin{proof}
``{\bf Necessity.}'' It follows from $\underline{\mathcal{A}}\in \mathcal{A}^I$.

``{\bf sufficiency.}'' Let $\mathcal{A}\in \mathcal{A}^I$.

$(a)$ Since $\mathcal{A}^I$ is an interval Z tensor and $\underline{\mathcal{A}}$ is a double B-tensor, then for all $i_1\in[n]$, $a_{i_1i_1\cdots i_1}\geq \underline{a}_{i_1i_1\cdots i_1}>\max\{0,\underline{a}_{i_1i_2\cdots i_m}:(i_2,i_3,\dots,i_m)\neq(i_1,i_1,\cdots,i_1)\}=0=\max\{0,a_{i_1i_2\cdots i_m}:(i_2,i_3,\dots,i_m)\neq(i_1,i_1,\cdots,i_1)\}$.

$(b)$ Let $i_1\in[n]$, then
\begin{align*}
a_{i_1i_1\cdots i_1}-\gamma_{i_1}^+(\mathcal{A})&\geq \underline{a}_{i_1i_1\cdots i_1}-0\\
&\geq\sum\limits_{(k_2,k_3,\cdots,k_m)\neq(i_1,i_1,\cdots,i_1)}(0-\underline{a}_{i_1k_2\cdots k_m})\\
&\geq\sum\limits_{(k_2,k_3,\cdots,k_m)\neq(i_1,i_1,\cdots,i_1)}(\gamma_{i_1}^+(\mathcal{A})-a_{i_1k_2\cdots k_m}).
\end{align*}
The second inequality holds because $\underline{\mathcal{A}}$ is a double B-tensor and a Z tensor.

$(c)$ Let $i_1,j_1\in[n]$, $i_1\neq j_1$, then
\begin{align*}
&(a_{i_1i_1\cdots i_1}-\gamma_{i_1}^+(\mathcal{A}))\cdot(a_{j_1j_1\cdots j_1}-\gamma_{j_1}^+(\mathcal{A}))\geq (\underline{a}_{i_1i_1\cdots i_1}-0)\cdot(\underline{a}_{j_1j_1\cdots j_1}-0)\\
&>\left(\sum\limits_{(k_2,k_3,\cdots,k_m)\neq(i_1,i_1,\cdots,i_1)}(0-\underline{a}_{i_1k_2\cdots k_m})\right)\cdot\left(\sum\limits_{(k_2,k_3,\cdots,k_m)\neq(j_1,j_1,\cdots,j_1)}(0-\underline{a}_{j_1k_2\cdots k_m})\right)\\
&\geq\left(\sum\limits_{(k_2,k_3,\cdots,k_m)\neq(i_1,i_1,\cdots,i_1)}(\gamma_{i_1}^+(\mathcal{A})-a_{i_1k_2\cdots k_m})\right)\cdot\left(\sum\limits_{(k_2,k_3,\cdots,k_m)\neq(j_1,j_1,\cdots,j_1)}(\gamma_{j_1}^+(\mathcal{A})-a_{j_1k_2\cdots k_m})\right).
\end{align*}
The second inequality holds because $\underline{\mathcal{A}}$ is a double B-tensor and a Z tensor.

In summary, every $\mathcal{A}\in\mathcal{A}^I$ is a double B-tensor, and thus $\mathcal{A}^I$ is an interval double B-tensor.
\end{proof}

\begin{proposition}\label{p18}
Let $\mathcal{A}^I$ be an interval tensor in $T_{m,n}$. For each $i_1\in[n]$, denote $\overline{a}_{i_1k_2^{i_1}\cdots k_m^{i_1}}:=\max\{\overline{a}_{i_1i_2\cdots i_m}:(i_2,i_3,\cdots,i_m)\neq (i_1,i_1,\cdots,i_1)\}$, $\underline{a}_{i_1\underline{k}_2^{i_1}\cdots \underline{k}_m^{i_1}}:=\max\{\underline{a}_{i_1i_2\cdots i_m}:(i_2,i_3,\cdots,i_m)\neq (i_1,i_1,\cdots,i_1)\}$. Define $\widehat{\mathcal{A}}=(\widehat{a}_{l_1l_2\cdots l_m})\in T_{m,n}$ as
$$\widehat{a}_{l_1l_2\cdots l_m}=\begin{cases}
\overline{a}_{l_1k_2^{l_1}\cdots k_m^{l_1}},\mbox{ if }(l_2,l_3,\cdots,l_m)=(k_2^{l_1},k_3^{l_1},\cdots,k_m^{l_1}),\\
\underline{a}_{l_1l_2\cdots l_m},\mbox{ if }l_1=l_2=\cdots=l_m,\\
\min\{\underline{a}_{l_1l_2\cdots l_m},\underline{a}_{l_1k_2^{l_1}\cdots k_m^{l_1}}\},\mbox{ otherwise}.
\end{cases}$$
Then $\mathcal{A}^I$ is an interval double B-tensors, when for all $i_1\in[n]$, $\underline{a}_{i_1\underline{k}_2^{i_1}\cdots \underline{k}_m^{i_1}}\geq0$ and $\widehat{\mathcal{A}}$ is a double B-tensor.
\end{proposition}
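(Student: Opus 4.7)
The plan is to show that every $\mathcal{A}\in\mathcal{A}^I$ is a double B-tensor by a monotone comparison with $\widehat{\mathcal{A}}$. For any tensor $\mathcal{B}\in T_{m,n}$ abbreviate $D_{i_1}(\mathcal{B}):=b_{i_1i_1\cdots i_1}-\gamma_{i_1}^+(\mathcal{B})$ and $S_{i_1}(\mathcal{B}):=\sum_{(i_2,\ldots,i_m)\neq(i_1,\ldots,i_1)}(\gamma_{i_1}^+(\mathcal{B})-b_{i_1i_2\cdots i_m})$, so that Definition \ref{def-dbt} asks for $D_{i_1}>0$, $D_{i_1}\geq S_{i_1}$, and $D_{i_1}D_{j_1}>S_{i_1}S_{j_1}$ when $i_1\neq j_1$. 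The first observation is that the hypothesis $\underline{a}_{i_1\underline{k}_2^{i_1}\cdots\underline{k}_m^{i_1}}\geq0$ forces $\overline{a}_{i_1k_2^{i_1}\cdots k_m^{i_1}}\geq0$, from which one reads off $\gamma_{i_1}^+(\widehat{\mathcal{A}})=\overline{a}_{i_1k_2^{i_1}\cdots k_m^{i_1}}$ and $\gamma_{i_1}^+(\mathcal{A})\leq\overline{a}_{i_1k_2^{i_1}\cdots k_m^{i_1}}=\gamma_{i_1}^+(\widehat{\mathcal{A}})$ for every $\mathcal{A}\in\mathcal{A}^I$. Together with $a_{i_1i_1\cdots i_1}\geq\underline{a}_{i_1i_1\cdots i_1}=\widehat{a}_{i_1i_1\cdots i_1}$ this yields $D_{i_1}(\mathcal{A})\geq D_{i_1}(\widehat{\mathcal{A}})>0$, which takes care of condition (a) of Definition \ref{def-dbt} at once.

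The heart of the proof is the opposite monotonicity $S_{i_1}(\mathcal{A})\leq S_{i_1}(\widehat{\mathcal{A}})$. Expanding,
\[
S_{i_1}(\widehat{\mathcal{A}})-S_{i_1}(\mathcal{A})=(n^{m-1}-1)\bigl[\gamma_{i_1}^+(\widehat{\mathcal{A}})-\gamma_{i_1}^+(\mathcal{A})\bigr]+\sum_{(i_2,\ldots)\neq(i_1,\ldots)}\bigl[a_{i_1i_2\cdots i_m}-\widehat{a}_{i_1i_2\cdots i_m}\bigr],
\]
and among the summands only the one at $(k_2^{i_1},\ldots,k_m^{i_1})$, equal to $a_{i_1k_2^{i_1}\cdots}-\overline{a}_{i_1k_2^{i_1}\cdots}$, can be negative; the rest are non-negative because $\widehat{a}_{i_1i_2\cdots i_m}\leq\underline{a}_{i_1i_2\cdots i_m}\leq a_{i_1i_2\cdots i_m}$ outside that coordinate. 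Let $(i_2^*,\ldots,i_m^*)$ be an index where the off-diagonal maximum of the $i_1$-th row of $\mathcal{A}$ is attained (it exists and satisfies $a_{i_1i_2^*\cdots}\geq 0$ by the first observation). If $(i_2^*,\ldots)=(k_2^{i_1},\ldots)$, a direct cancellation reduces the right-hand side to $(n^{m-1}-2)(\overline{a}_{i_1k_2^{i_1}\cdots}-a_{i_1k_2^{i_1}\cdots})+\sum_{\mathrm{others}}(a-\widehat{a})\geq 0$. Otherwise I would exploit the $\min$ clause of $\widehat{\mathcal{A}}$ to write $\widehat{a}_{i_1i_2^*\cdots}\leq\underline{a}_{i_1k_2^{i_1}\cdots}$, bound the term at $(i_2^*,\ldots)$ from below by $a_{i_1i_2^*\cdots}-\underline{a}_{i_1k_2^{i_1}\cdots}$, and combine it with the negative term at $(k_2^{i_1},\ldots)$ and with $(n^{m-1}-1)(\overline{a}_{i_1k_2^{i_1}\cdots}-a_{i_1i_2^*\cdots})$. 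After algebra the whole expression collapses to $(n^{m-1}-2)(\overline{a}_{i_1k_2^{i_1}\cdots}-a_{i_1i_2^*\cdots})+(a_{i_1k_2^{i_1}\cdots}-\underline{a}_{i_1k_2^{i_1}\cdots})\geq 0$, which covers both $\gamma_{i_1}^+(\mathcal{A})>0$ and the degenerate case $\gamma_{i_1}^+(\mathcal{A})=0=a_{i_1i_2^*\cdots}$ in one shot.

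With both inequalities $D_{i_1}(\mathcal{A})\geq D_{i_1}(\widehat{\mathcal{A}})>0$ and $0\leq S_{i_1}(\mathcal{A})\leq S_{i_1}(\widehat{\mathcal{A}})$ in hand, condition (b) of Definition \ref{def-dbt} for $\mathcal{A}$ follows immediately from the same property of $\widehat{\mathcal{A}}$ via the chain $D_{i_1}(\mathcal{A})\geq D_{i_1}(\widehat{\mathcal{A}})\geq S_{i_1}(\widehat{\mathcal{A}})\geq S_{i_1}(\mathcal{A})$, and condition (c) follows by multiplying the $D$- and $S$-chains across distinct indices $i_1\neq j_1$ while inheriting the strict inequality $D_{i_1}(\widehat{\mathcal{A}})D_{j_1}(\widehat{\mathcal{A}})>S_{i_1}(\widehat{\mathcal{A}})S_{j_1}(\widehat{\mathcal{A}})$. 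The main obstacle I expect to be the second sub-case of the $S$-comparison, since a naive term-by-term bound fails at the coordinate $(k_2^{i_1},\ldots,k_m^{i_1})$; recovering the deficit globally requires precisely the $\min\{\underline{a}_{i_1i_2\cdots i_m},\underline{a}_{i_1k_2^{i_1}\cdots k_m^{i_1}}\}$ entries that were engineered into $\widehat{\mathcal{A}}$ at every other off-diagonal position.
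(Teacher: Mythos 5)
Your proof is correct and follows essentially the same route as the paper: both compare an arbitrary $\mathcal{A}\in\mathcal{A}^I$ with $\widehat{\mathcal{A}}$ via the two monotonicity inequalities $a_{i_1i_1\cdots i_1}-\gamma^+_{i_1}(\mathcal{A})\geq\widehat{a}_{i_1i_1\cdots i_1}-\gamma^+_{i_1}(\widehat{\mathcal{A}})$ and $\sum(\gamma^+_{i_1}(\mathcal{A})-a_{i_1k_2\cdots k_m})\leq\sum(\gamma^+_{i_1}(\widehat{\mathcal{A}})-\widehat{a}_{i_1k_2\cdots k_m})$, and then invoke conditions (a)--(c) of Definition \ref{def-dbt} for $\widehat{\mathcal{A}}$. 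Your case split on whether the argmax of $\mathcal{A}$'s row coincides with $(k_2^{i_1},\cdots,k_m^{i_1})$, using the $\min$ clause to absorb the deficit at that coordinate, is precisely the step the paper asserts in one line, so your version is if anything more complete.
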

\begin{proof}
Let $\mathcal{A}\in\mathcal{A}^I$, $i_1,j_1\in[n]$, $i_1\neq j_1$. Then $\overline{a}_{i_1k_2^{i_1}\cdots k_m^{i_1}}\geq\overline{a}_{i_1\underline{k}_2^{i_1}\cdots \underline{k}_m^{i_1}}\geq\underline{a}_{i_1\underline{k}_2^{i_1}\cdots \underline{k}_m^{i_1}}\geq0$.
And $a_{i_1i_1\cdots i_1}\geq\underline{a}_{i_1i_1\cdots i_1}>\max\{0,\overline{a}_{i_1k_2^{i_1}\cdots k_m^{i_1}}\}=\overline{a}_{i_1k_2^{i_1}\cdots k_m^{i_1}}\geq \max\{0,a_{i_1i_2\cdots i_m}:(i_2,i_3,\cdots,i_m)\neq(i_1,i_1,\cdots,i_1)\}$ since $\widehat{\mathcal{A}}$ is a double B-tensor. Therefore, for all $\mathcal{A}\in\mathcal{A}^I$ condition $(a)$ of Definition \ref{def-dbt} is satisfied.

For all $i_1\in[n]$, $a_{i_1l_2^{i_1}\cdots l_m^{i_m}}:=\max\{a_{i_1i_2\cdots i_m}:(i_2,i_3,\cdots,i_m)\neq(i_1,i_1,\cdots,i_1)\}$, then $\gamma_{i_1}^+(\mathcal{A})=a_{i_1l_2^{i_1}\cdots l_m^{i_m}}$ as $\max\{a_{i_1i_2\cdots i_m}:(i_2,i_3,\cdots,i_m)\neq(i_1,i_1,\cdots,i_1)\}\geq\underline{a}_{i_1\underline{k}_2^{i_1}\cdots \underline{k}_m^{i_1}}\geq0$. Therefore
\begin{align*}
a_{i_1i_1\cdots i_1}-\gamma_{i_1}^+(\mathcal{A})&\geq \underline{a}_{i_1i_1\cdots i_1}-\overline{a}_{i_1k_2^{i_1}\cdots k_m^{i_1}}=\widehat{a}_{i_1i_1\cdots i_1}-\gamma_{i_1}^+(\widehat{\mathcal{A}})\\
&\geq\sum\limits_{(k_2,k_3,\cdots,k_m)\neq(i_1,i_1,\cdots,i_1)}(\gamma_{i_1}^+(\widehat{\mathcal{A}})-\widehat{a}_{i_1k_2\cdots k_m})\\
&=\sum\limits_{\substack{(k_2,k_3,\cdots,k_m)\neq(i_1,i_1,\cdots,i_1)\\
      (k_2,k_3,\cdots,k_m)\neq(k_2^{i_1},k_3^{i_1},\cdots, k_m^{i_1})}}(\overline{a}_{i_1k_2^{i_1}\cdots k_m^{i_1}}-\min\{\underline{a}_{l_1l_2\cdots l_m},\underline{a}_{l_1k_2^{l_1}\cdots k_m^{l_1}}\})\\
&\geq\sum\limits_{(k_2,k_3,\cdots,k_m)\neq(i_1,i_1,\cdots,i_1)}(a_{i_1l_2^{i_1}\cdots l_m^{i_m}}-a_{i_1k_2\cdots k_m})\\
&=\sum\limits_{(k_2,k_3,\cdots,k_m)\neq(i_1,i_1,\cdots,i_1)}(\gamma_{i_1}^+(\mathcal{A})-a_{i_1k_2\cdots k_m}),
\end{align*}
which implies that for all $\mathcal{A}\in\mathcal{A}^I$ condition $(b)$ of Definition \ref{def-dbt} is satisfied. Conditions $(c)$ of Definition \ref{def-dbt} is discussed analogously.
\begin{align*}
&(a_{i_1i_1\cdots i_1}-\gamma_{i_1}^+(\mathcal{A}))\cdot(a_{j_1j_1\cdots j_1}-\gamma_{j_1}^+(\mathcal{A}))\geq (\underline{a}_{i_1i_1\cdots i_1}-\overline{a}_{i_1k_2^{i_1}\cdots k_m^{i_1}})\cdot(\underline{a}_{j_1j_1\cdots j_1}-\overline{a}_{j_1k_2^{j_1}\cdots k_m^{j_1}})\\
&=(\widehat{a}_{i_1i_1\cdots i_1}-\gamma_{i_1}^+(\widehat{\mathcal{A}}))\cdot(\widehat{a}_{j_1j_1\cdots j_1}-\gamma_{j_1}^+(\widehat{\mathcal{A}}))\\
&>\left(\sum\limits_{(k_2,k_3,\cdots,k_m)\neq(i_1,i_1,\cdots,i_1)}(\gamma_{i_1}^+(\widehat{\mathcal{A}})-\widehat{a}_{i_1k_2\cdots k_m})\right)\cdot\left(\sum\limits_{(k_2,k_3,\cdots,k_m)\neq(j_1,j_1,\cdots,j_1)}(\gamma_{j_1}^+(\widehat{\mathcal{A}})-\widehat{a}_{j_1k_2\cdots k_m})\right)\\
&=\left(\sum\limits_{\substack{(k_2,k_3,\cdots,k_m)\neq(i_1,i_1,\cdots,i_1)\\
      (k_2,k_3,\cdots,k_m)\neq(k_2^{i_1},k_3^{i_1},\cdots, k_m^{i_1})}}(\overline{a}_{i_1k_2^{i_1}\cdots k_m^{i_1}}-\min\{\underline{a}_{l_1l_2\cdots l_m},\underline{a}_{l_1k_2^{l_1}\cdots k_m^{l_1}}\})\right)\cdot\\
&~~~\left(\sum\limits_{\substack{(k_2,k_3,\cdots,k_m)\neq(j_1,j_1,\cdots,j_1)\\
      (k_2,k_3,\cdots,k_m)\neq(k_2^{j_1},k_3^{j_1},\cdots, k_m^{j_1})}}(\overline{a}_{j_1k_2^{j_1}\cdots k_m^{j_1}}-\min\{\underline{a}_{l_1l_2\cdots l_m},\underline{a}_{l_1k_2^{l_1}\cdots k_m^{l_1}}\})\right)\\
&\geq\left(\sum\limits_{(k_2,k_3,\cdots,k_m)\neq(i_1,i_1,\cdots,i_1)}(a_{i_1l_2^{i_1}\cdots l_m^{i_m}}-a_{i_1k_2\cdots k_m})\right)\cdot\left(\sum\limits_{(k_2,k_3,\cdots,k_m)\neq(j_1,j_1,\cdots,j_1)}(a_{j_1l_2^{j_1}\cdots l_m^{j_m}}-a_{j_1k_2\cdots k_m})\right)\\
&=\left(\sum\limits_{(k_2,k_3,\cdots,k_m)\neq(i_1,i_1,\cdots,i_1)}(\gamma_{i_1}^+(\mathcal{A})-a_{i_1k_2\cdots k_m})\right)\cdot\left(\sum\limits_{(k_2,k_3,\cdots,k_m)\neq(j_1,j_1,\cdots,j_1)}(\gamma_{j_1}^+(\mathcal{A})-a_{j_1k_2\cdots k_m})\right),
\end{align*}
which implies that for all $\mathcal{A}\in\mathcal{A}^I$ condition $(c)$ of Definition \ref{def-dbt} is satisfied.

Hence, every $\mathcal{A}\in\mathcal{A}^I$ is a double B-tensor, and thus $\mathcal{A}^I$ is an interval double B-tensor.
\end{proof}

For the special case of circulant tensors, interval double B-tensors and interval B-tensors are equivalent, and the criterion can be reduced to conditions that apply only to the first row.

\begin{proposition}\label{p20}
Let $\mathcal{A}^I$ be an interval tensor in $T_{m,n}$, where $\underline{\mathcal{A}}$ and $\overline{\mathcal{A}}$ are circulant. Then the equivalence holds for the following assertions.
\begin{itemize}
  \item [(a)] $\mathcal{A}^I$ is an interval double B-tensor.
  \item [(b)] $\mathcal{A}^I$ is an interval B-tensor.
  \item [(c)] \begin{itemize}
                \item [(c$_1$)] $\underline{a}_{11\cdots1}>-\sum\limits_{(i_2,i_3,\cdots,i_m)\neq(1,1,\cdots,1)}\underline{a}_{1i_2\cdots i_m}$,
                \item [(c$_2$)] $\underline{a}_{11\cdots1}-\overline{a}_{1j_2\cdots j_m}>\sum\limits_{\substack{(i_2,i_3,\cdots,i_m)\neq(1,1\cdots,1)\\
      (i_2,i_3,\cdots,i_m)\neq(j_2,j_3,\cdots, j_m)}}(\overline{a}_{1i_2\cdots i_m}-\underline{a}_{1j_2\cdots j_m})$ holds for all $(j_2,j_3,\cdots,j_m)\neq(1,1,\cdots,1)$.
              \end{itemize}
  \end{itemize}
\end{proposition}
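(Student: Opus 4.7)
The plan is to close a short cycle of implications. The implication $(b)\Rightarrow(a)$ is immediate from Proposition \ref{p12}; the equivalence $(b)\Leftrightarrow(c)$ reduces Corollary \ref{c5} under the circulant hypothesis; and $(a)\Rightarrow(b)$ is handled by contradiction using Proposition \ref{p13} together with the cyclic symmetry of the rows.

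For $(b)\Leftrightarrow(c)$, I would invoke Corollary \ref{c5}, which characterizes interval B-tensors by a row-sum positivity condition plus a family of diagonal-dominance type inequalities indexed over $i_1\in[n]$ and $(j_2,\ldots,j_m)\neq(i_1,\ldots,i_1)$. Because both $\underline{\mathcal{A}}$ and $\overline{\mathcal{A}}$ are circulant, the cyclic relabelling $j_l\mapsto j_l-i_1+1$ (interpreted mod $n$) sends $\underline{a}_{i_1 j_2\cdots j_m}$ to $\underline{a}_{1 j_2'\cdots j_m'}$ and $\overline{a}_{i_1 j_2\cdots j_m}$ to $\overline{a}_{1 j_2'\cdots j_m'}$, and it is a bijection from $\{(j_2,\ldots,j_m)\neq(i_1,\ldots,i_1)\}$ onto $\{(j_2',\ldots,j_m')\neq(1,\ldots,1)\}$. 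Hence the Corollary \ref{c5} conditions for all $i_1\in[n]$ coincide with those same conditions specialized to $i_1=1$, which are exactly (c$_1$) and (c$_2$).

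For $(a)\Rightarrow(b)$, I would argue by contradiction. Suppose $\mathcal{A}^I$ is an interval double B-tensor but not an interval B-tensor. By Proposition \ref{p13}, there is a \emph{unique} $j_1\in[n]$ at which either $\sum \underline{a}_{j_1 j_2\cdots j_m}\leq 0$ or the critical equality $\underline{a}_{j_1 j_1\cdots j_1}-\overline{a}_{j_1 l_2\cdots l_m}=\sum(\overline{a}_{j_1 l_2\cdots l_m}-\underline{a}_{j_1 j_2\cdots j_m})$ holds for some $(l_2,\ldots,l_m)$. Applying the same cyclic shift used above transports this failure to every other row $i_1\in[n]$, replacing $(l_2,\ldots,l_m)$ by its shift. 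Thus, for $n\geq 2$ every row fails the strict B-tensor condition, contradicting the uniqueness clause of Proposition \ref{p13}. (When $n=1$ the second alternative in Proposition \ref{p13} is vacuous, so $\mathcal{A}^I$ is already an interval B-tensor.) Therefore $(a)\Rightarrow(b)$, completing the cycle $(a)\Rightarrow(b)\Rightarrow(a)$ and $(b)\Leftrightarrow(c)$.

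The main obstacle I anticipate is the careful bookkeeping of the index-shift bijection for circulant tensors: one must verify that the cyclic relabelling bijects the relevant constraint sets and preserves the multi-index sums appearing both in Corollary \ref{c5}(b) and in the ``critical row'' statement of Proposition \ref{p13}. Once this relabelling is set up precisely, the transfer of inequalities from row $1$ to an arbitrary row $i_1$ (and back) is automatic, and the proof becomes a routine packaging of the non-interval circulant result (Proposition \ref{p11}) into the interval setting.
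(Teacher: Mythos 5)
Your proposal is correct, and two of the three legs ($(b)\Rightarrow(a)$ via Proposition \ref{p12}, and $(b)\Leftrightarrow(c)$ by pushing Corollary \ref{c5} through the cyclic index shift) coincide with the paper's argument. Where you genuinely diverge is $(a)\Rightarrow(b)$. The paper argues directly from the pairwise product conditions of Theorem \ref{th2}: applying $(c_3)$ (resp.\ $(c_1)$) to a pair of rows and using circulance to make the two factors equal, it obtains $a_{i_1\cdots i_1}^2>\bigl(\max\{0,-\sum\underline{a}_{i_1k_2\cdots k_m}\}\bigr)^2$ and $(\underline{a}_{i_1\cdots i_1}-\overline{a}_{i_1i_2\cdots i_m})^2>(\max\{0,\cdots\})^2$, then takes square roots to recover the strict row-wise conditions of Corollary \ref{c5} --- the same squaring trick used in Proposition \ref{p11}. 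You instead invoke the dichotomy of Proposition \ref{p13}: if $\mathcal{A}^I$ were double B but not B, there would be a \emph{unique} critical row, yet circulance of $\underline{\mathcal{A}}$ and $\overline{\mathcal{A}}$ transports the failure (whether the nonpositive row sum or the critical equality) to every row, contradicting uniqueness when $n\geq 2$. Both arguments are sound; the paper's is self-contained at the level of Theorem \ref{th2} and makes the ``why'' transparent (the product condition degenerates to a square), while yours is shorter and reuses the already-proved structural result, at the cost of routing through the heavier Proposition \ref{p13} and needing the small $n=1$ caveat you correctly note. One cosmetic point worth flagging: condition (c$_2$) as printed has the summation index on $\overline{a}$ and the fixed index on $\underline{a}$, which is inconsistent with Corollary \ref{c5}(b); like the paper's own proof, you implicitly read it in the Corollary \ref{c5} form, which is surely the intended one.
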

\begin{proof}
``$(a)\Rightarrow(b)$''. For any $i_1\in[n]$ and $(i_2,i_3,\cdots,i_m)\neq (i_1,i_1,\cdots,i_m)$, let $j_l\in[n]$, $j_l=k_l+1(\mbox{mod }n)$, $l\in[m]$. Since $\mathcal{A}^I$ is an interval double B-tensor, then
\begin{align*}
&a_{i_1i_1\cdots i_1}a_{j_1j_1\cdots j_1}\\
&> \left(\max\left\{0,-\sum\limits_{(k_2,k_3,\cdots,k_m)\neq(i_1,i_1,\cdots,i_1)}\underline{a}_{i_1k_2\cdots k_m}\right\}\right)\cdot\left(\max\left\{0,-\sum\limits_{(k_2,k_3,\cdots,k_m)\neq(j_1,j_1,\cdots,j_1)}\underline{a}_{i_1k_2\cdots k_m}\right\}\right).
\end{align*}
As $\underline{\mathcal{A}}$ is circulant, then
\begin{align*}
a_{i_1i_1\cdots i_1}^2> \left(\max\left\{0,-\sum\limits_{(k_2,k_3,\cdots,k_m)\neq(i_1,i_1,\cdots,i_1)}\underline{a}_{i_1k_2\cdots k_m}\right\}\right)^2.
\end{align*}
Thus,
\begin{align*}
a_{i_1i_1\cdots i_1}=|a_{i_1i_1\cdots i_1}|> \left|\max\left\{0,-\sum\limits_{(k_2,k_3,\cdots,k_m)\neq(i_1,i_1,\cdots,i_1)}\underline{a}_{i_1k_2\cdots k_m}\right\}\right|\geq-\sum\limits_{(k_2,k_3,\cdots,k_m)\neq(i_1,i_1,\cdots,i_1)}\underline{a}_{i_1k_2\cdots k_m}.
\end{align*}
Therefore, $\sum\limits_{k_2,k_3,\cdots,k_m=1}^n\underline{a}_{i_1k_2\cdots k_m}>0$, which implies that $\mathcal{A}^I$ satisfies condition $(a)$ of Corollary \ref{c5}. From Theorem \ref{th2},
\begin{align*}
&(\underline{a}_{i_1i_1\cdots i_1}-\overline{a}_{i_1i_2\cdots i_m})\cdot(\underline{a}_{j_1j_1\cdots j_1}-\overline{a}_{j_1j_2\cdots j_m})\\
&>\left(\max\left\{0,\sum\limits_{\substack{(k_2,k_3,\cdots,k_m)\neq(i_1,i_1,\cdots,i_1)\\
      (k_2,k_3,\cdots,k_m)\neq(i_2,i_3,\cdots,i_m)}}(\overline{a}_{i_1i_2\cdots i_m}-\underline{a}_{i_1k_2\cdots k_m})\right\}\right)\cdot\\
&~~~\left(\max\left\{0,\sum\limits_{\substack{(k_2,k_3,\cdots,k_m)\neq(j_1,j_1,\cdots,j_1)\\
      (k_2,k_3,\cdots,k_m)\neq(j_2,j_3,\cdots,j_m)}}(\overline{a}_{j_1j_2\cdots j_m}-\underline{a}_{j_1k_2\cdots k_m})\right\}\right),
\end{align*}
then according to $\underline{\mathcal{A}}$ and $\overline{\mathcal{A}}$ are circulant,
\begin{align*}
(\underline{a}_{i_1i_1\cdots i_1}-\overline{a}_{i_1i_2\cdots i_m})^2>\left(\max\left\{0,\sum\limits_{\substack{(k_2,k_3,\cdots,k_m)\neq(i_1,i_1,\cdots,i_1)\\
      (k_2,k_3,\cdots,k_m)\neq(i_2,i_3,\cdots,i_m)}}(\overline{a}_{i_1i_2\cdots i_m}-\underline{a}_{i_1k_2\cdots k_m})\right\}\right)^2.
\end{align*}
Therefore,
\begin{align*}
\underline{a}_{i_1i_1\cdots i_1}-\overline{a}_{i_1i_2\cdots i_m}&=|\underline{a}_{i_1i_1\cdots i_1}-\overline{a}_{i_1i_2\cdots i_m}|>\left|\max\left\{0,\sum\limits_{\substack{(k_2,k_3,\cdots,k_m)\neq(i_1,i_1,\cdots,i_1)\\
      (k_2,k_3,\cdots,k_m)\neq(i_2,i_3,\cdots,i_m)}}(\overline{a}_{i_1i_2\cdots i_m}-\underline{a}_{i_1k_2\cdots k_m})\right\}\right|\\
      &\geq\sum\limits_{\substack{(k_2,k_3,\cdots,k_m)\neq(i_1,i_1,\cdots,i_1)\\
      (k_2,k_3,\cdots,k_m)\neq(i_2,i_3,\cdots,i_m)}}(\overline{a}_{i_1i_2\cdots i_m}-\underline{a}_{i_1k_2\cdots k_m}).
\end{align*}
This shows that $\mathcal{A}^I$ satisfies condition $(b)$ of Corollary \ref{c5}, thus $\mathcal{A}^I$ is an interval B-tensor.

``$(b)\Rightarrow(a)$'' Trivial.

``$(b)\Rightarrow(c)$'' Since $\mathcal{A}^I$ is an interval B-tensor, the conclusion follows directly form Corollary \ref{c5}.

``$(c)\Rightarrow(b)$'' As $\underline{\mathcal{A}}$ is circulant, by $(c_1)$ we can obtain that for all $i_1\in[n]$, $\sum\limits_{i_2,i_3,\cdots,i_m=1}^n\underline{a}_{i_1,i_2\cdots i_m}>0$.

And since $\underline{\mathcal{A}}$ also is circulant, then for all $(i_2,i_3,\cdots,i_m)\neq(1,1,\cdots,1)$, there exists $(i_2',i_3',\cdots,i_m')\neq(i_1,i_1,\cdots,i_1)$ such that
\begin{align*}
\underline{a}_{11\cdots 1}-\overline{a}_{1i_2\cdots i_m}=\underline{a}_{i_1i_1\cdots i_1}-\overline{a}_{i_1i_2'\cdots i_m'},
\end{align*}
and
\begin{align*}
\sum\limits_{\substack{(k_2,k_3,\cdots,k_m)\neq(1,1,\cdots,1)\\
      (k_2,k_3,\cdots,k_m)\neq(i_2,i_3,\cdots,i_m)}}(\overline{a}_{1i_2\cdots i_m}-\underline{a}_{1k_2\cdots k_m})=\sum\limits_{\substack{(k_2,k_3,\cdots,k_m)\neq(i_1,i_1,\cdots,i_1)\\
      (k_2,k_3,\cdots,k_m)\neq(i_2',i_3',\cdots,i_m')}}(\overline{a}_{i_1i_2'\cdots i_m'}-\underline{a}_{i_1k_2\cdots k_m}).
\end{align*}
 By $(c_2)$ we can obtain that for all $i_1\in[n]$, and $(i_2,i_3,\cdots,i_m)\neq(i_1,i_1,\cdots,i_1)$, such that
 \begin{align*}
\underline{a}_{i_1i_1\cdots i_1}-\overline{a}_{i_1i_2\cdots i_m}>\sum\limits_{\substack{(k_2,k_3,\cdots,k_m)\neq(i_1,i_1,\cdots,i_1)\\
      (k_2,k_3,\cdots,k_m)\neq(i_2,i_3,\cdots,i_m)}}(\overline{a}_{i_1i_2\cdots i_m}-\underline{a}_{i_1k_2\cdots k_m}).
\end{align*}
According to Corollary \ref{c5}, $\mathcal{A}^I$ is an interval B-tensor.
\end{proof}

\begin{example}
Interval tensor $\mathcal{B}^I = [\underline{\mathcal{B}}, \overline{\mathcal{B}}] \in T_{3,2}$ with
$\underline{b}_{111}= \underline{b}_{222}=6$, $\underline{b}_{112}=\underline{b}_{121}=\underline{b}_{211}= \underline{b}_{122}=\underline{b}_{212}=\underline{b}_{221}=0$, $b_{111}=b_{222}=7$, and $b_{112}=b_{121}=b_{211}=b_{122}=b_{212}=b_{221}=1$.
\end{example}
By check the conditions in Theorem \ref{th2}, we can prove that the interval tensor $\mathcal{B}^I$ is an interval double B-tensor. For example:

$$\underline{b}_{111}=6 > \max\{0, \overline{b}_{112},\overline{b}_{121},\overline{b}_{122}\} = \max\{0,1,1,1\} = 1,$$
then ($a$) of Theorem \ref{th2} is satisfied for $i_1=1$.
$$\underline{b}_{111} - \overline{b}_{112} = 6-1 = 5 \geq \sum\limits_{\substack{(k_2,k_3)\neq(1,1)\\ \neq(1,2)}} (\overline{b}_{112} - \underline{b}_{1k_2k_3}) = (1-0)+(1-0) = 2.$$
then ($b_1$) of Theorem \ref{th2} is satisfied for $i_1=1$ and $(i_2,i_3)=(1,2)$,
$$(\underline{b}_{111}-\overline{b}_{112})(\underline{b}_{222}-\overline{b}_{221}) = (6-1)(6-1) = 25
> \bigl((1-0)+(1-0)\bigr)\bigl((1-0)+(1-0)\bigr) = 4,$$
then ($c_1$) of Theorem \ref{th2} is satisfied for $i_1=1$, $j_1=2$.

\section{Conclusions}
This paper systematically develops a theoretical framework for interval B-tensors and interval double B-tensors, providing a structured approach to analyze tensors subject to interval uncertainty. Firstly, we introduce definitions for these interval tensor classes and then establish a series of verifiable criteria that depend solely on the extreme point tensors, clarifying their intrinsic connections with key structures such as strictly diagonally dominated tensors, interval Z-tensors, and interval P-tensors. Furthermore, for special structures like circulant tensors, simplified checking conditions are derived, which significantly reduce computational complexity. Under the assumptions of even order and symmetry, we prove that both interval B-tensors and interval double B-tensors ensure the interval P-tensor property, thereby establishing theoretical links to positive definiteness and complementarity problems. These results extend classical interval matrix theory to the higher-order tensor setting and offer both theoretical foundations and practical tools for analyzing tensor structures under uncertainty.

This study lays a foundation for further exploration of interval structured tensors. Future work may focus on developing efficient verification algorithms for large-scale problems and extending the present theory to other important classes of structured interval tensors. Furthermore, applications in areas such as robust polynomial optimization, uncertain multilinear systems, and data-driven machine learning warrant in-depth investigation.

\section*{Competing interest}
The author declares that he has no known competing financial interests or personal relationships that could have appeared to influence the work reported in this paper.
\section*{Availability of data and materials}
This manuscript has no associated data or the data will not be deposited. [Author's comment: This is a theoretical study and there are no external data associated with the manuscript].
\section*{Funding}
This  work was supported by the National Natural Science Foundation of P.R. China (Grant No.12171064).




\end{document}